\documentclass[11pt,reqno]{amsart}

\usepackage[utf8]{inputenc} 



\usepackage[margin=1in]{geometry} 


\usepackage{graphicx} 
\usepackage{float} 

\usepackage{changepage}
\usepackage{stackengine}

 \usepackage[parfill]{parskip} 
 
\usepackage{booktabs} 
\usepackage{array} 
\usepackage{paralist} 
\usepackage{verbatim} 
\usepackage{subfig} 
\usepackage{mathrsfs}
\usepackage{amssymb}
\usepackage{xcolor}
\usepackage{amsthm}
\usepackage{amsmath,amsfonts,amssymb,esint,hyperref, varioref}
\usepackage[noabbrev, capitalize]{cleveref}

\usepackage{autonum}

\usepackage{graphics,color}
\usepackage{enumerate, enumitem}
\usepackage{mathtools,centernot}
\usepackage{cases}
\usepackage{amsrefs}
\usepackage{bbm}
\usepackage{xfrac}



\pagestyle{plain} 

\bibliographystyle{alphabetic}


\usepackage{bookmark}

\newtheorem{theorem}{Theorem}[section]
\newtheorem{lemma}[theorem]{Lemma}

\newtheorem{corollary}[theorem]{Corollary}

\newtheorem{proposition}[theorem]{Proposition}
\newtheorem{remark}[theorem]{Remark}

\numberwithin{equation}{section} 

\newcommand{\norm}[1]{\left\|#1\right\|}
\newcommand{\abs}[1]{\left|#1\right|}

\newcommand{\T}{\ensuremath{\mathbb{T}}}
\newcommand*{\R}{\ensuremath{\mathbb{R}}}

\newcommand*{\Z}{\ensuremath{\mathbb{Z}}}

\newcommand{\eps}{\varepsilon}

\newcommand{\quotes}[1]{``#1''}

\renewcommand{\MR}[1]{} 

\usepackage{color, graphicx}
\usepackage{mathrsfs, dsfont}

\usepackage[]{hyperref}
\hypersetup{
    colorlinks=true,       
    linkcolor=red,          
    citecolor=blue,        
    filecolor=red,      
    urlcolor=cyan           
}

\def\dist{\mathop{\rm dist}\nolimits}    
\def\div{\mathop{\rm div}\nolimits}    
 
\def\dim{\mathop{\rm dim}\nolimits}
\def\spt{\mathop{\rm Spt}\nolimits}


\newcommand{\be}{\begin{equation}}
\newcommand{\ee}{\end{equation}}

\title{Intermittency and Dissipation Regularity in Turbulence}

\author[L. De Rosa]{Luigi De Rosa}
\address[L. De Rosa]{Gran Sasso Science Institute, viale Francesco Crispi, 7, 67100 L’Aquila, Italy}
\email{luigi.derosa@gssi.it}

\author[T. D. Drivas]{Theodore D. Drivas}
\address[T. D. Drivas]{Department of Mathematics, Stony Brook University, Stony Brook, NY, 11794, USA}
\email{tdrivas@math.stonybrook.edu}

\author[M. Inversi]{Marco Inversi}
\address[M. Inversi]{Departement Mathematik und Informatik, Universit\"at Basel, CH-4051 Basel, Switzerland}
\email{marco.inversi@unibas.ch}

\author[P. Isett]{Philip Isett}
\address[P. Isett]{Department of Mathematics, California Institute of Technology, Pasadena CA-91125, USA}
\email{isett@caltech.edu}

\date{\today}

\subjclass[2020]{35Q31 - 35D30 - 76F02 - 28A80.}
\keywords{Incompressible Euler - Dissipation measure - K41 Theory of Turbulence - Intermittency.}

\begin{document}

\begin{abstract}
We lay down a geometric-analytic framework to capture properties of  energy dissipation within weak solutions to the incompressible Euler equations. For solutions with spatial Besov regularity, it is proved that the Duchon--Robert distribution has optimal improved regularity in a negative Besov space and, in the case it is a Radon measure, it is absolutely continuous with respect to a suitable Hausdorff measure. This imposes quantitative constraints on the dimension of the, possibly fractal, dissipative set and the admissible structure functions exponents, relating to the phenomenon of ``intermittency'' in turbulence. 
As a by-product of the approach, we also recover many  known ``Onsager singularity'' type results.
\end{abstract}

\maketitle
\section{Introduction}
We consider the incompressible Euler equations
\begin{equation}\label{E} \tag{E}
\left\{\begin{array}{l}
\partial_t u +\div (u \otimes u) +\nabla q =0 \\
\div u=0
\end{array}\right. \qquad \text{in }\T^d \times (0,T).
\end{equation}  
By the work of Duchon--Robert \cite{DR00},  weak solutions of  \eqref{E} in $L^3_{x,t}$ satisfy a local energy  balance
\begin{equation}
    \label{enbal_E}
\partial_t \frac{\abs{u}^2}{2} +\div  \left(\left(\frac{\abs{u}^2}{2}+q\right)u\right)=-D  \qquad \text{in } \mathcal D'_{x,t},
\end{equation}
with a distributional source/sink term $D$, known as the ``Duchon--Robert distribution", representing the effect of anomalous dissipation, a central phenomenon in the context of ``fully developed turbulence" and the related Kolmogorov \cite{K41} and Onsager \cite{O49} theories.  Motivated by the ubiquity of ``intermittency" in turbulent fluids, we are concerned with geometric/analytic properties of the Duchon--Robert distribution under local regularity assumptions on the weak solution.  Our main theorem is the following.

\begin{theorem}[Dissipation regularity]
    \label{T:main D regularity}
    Assume that $u\in L^p_t B^\sigma_{p,\infty}$ is a weak solution to \eqref{E} for some   $p\in [3,\infty]$ and $\sigma \in \left(0,1\right)$, with Duchon--Robert distribution $D$. Then $D\in B^{\frac{2\sigma}{1-\sigma}-1}_{\frac{p}{3},\infty}$ locally in space-time. If in addition $D$ is a real-valued Radon measure, we have 
     \begin{itemize}
            \item[(i)] $\abs{D}$  is absolutely continuous with respect to $\mathcal{ H}^\gamma$ for any $\gamma \geq 0$ such that
            \begin{equation}\label{gamma_condition}
                \frac{2\sigma}{1-\sigma}>1- \frac{p-3}{p} (d+1-\gamma).
            \end{equation}
            \item[(ii)]  If $D\geq 0$,  $\forall K$ compact $\exists r_0>0$ such that
            \begin{equation}\label{density bound}
                D(B_r (x,t))\lesssim r^{\frac{2\sigma}{1-\sigma}-1 +\frac{p-3}{p}(d+1)}\qquad \forall (x,t)\in K, \, \forall r<r_0.
                \end{equation}
    \end{itemize}
\end{theorem}
In the theorem there is no reference to the pressure since weak solutions to \eqref{E} can be defined by testing with divergence-free vector fields. The pressure is then recovered a posteriori as the unique zero-mean solution to 
$$
-\Delta q = \div \div (u\otimes u).
$$

The assumption that $D$ is a Radon measure is satisfied, for instance, whenever $u$ arises as a strong $L^3_{x,t}$ limit of suitable Navier--Stokes weak solutions. See Section \ref{S:intermittency turbulence} for further discussion. This theorem generalizes the results of \cite{DRIS24}. More importantly, it lays down a framework which captures fine properties on possibly densely distributed concentration sets of $D$. Indeed, the arguments from \cite{DRIS24} (after \cites{Is24,DRH23}) fail to give any non-trivial conclusion as soon as the dissipative set is anywhere dense. To overcome this difficulty a more intrinsic approach is necessary.  {The use of the Hausdorff measure $\mathcal H^\gamma$ to quantify the regularity of $D$ has been made for convenience\footnote{{Treating space and time equally makes the results cleaner. However, because of the space-time anisotropic nature of \eqref{enbal_E}, it is very unclear whether a universally better choice exists.}}, but different choices would lead to different conclusions. When $D$ is only a distribution, a property similar to $(i)$ can still be deduced (see Remark \ref{R:distrib support}).} The uniform regularity of $D$ on space-time balls in $(ii)$ provides a $L^\infty_{x,t}$ bound on the upper fractional density of $D$ (see Remark \ref{R:density bound}) which might be relevant in view of the \quotes{multifractal} nature of turbulence \cites{BMV08,FP85,BPPV84,Falc}. The argument of Theorem \ref{T:main D regularity} is completely local and it carries over to the case of general open sets modulo minor technical details (see Section \ref{S:open_set}).

Theorem \ref{T:main D regularity} is relevant in the range $\sigma \in \left( 0 ,\frac13\right]$. In fact, the conclusions are slightly suboptimal at the endpoint $\sigma =\frac13$ (see Remark \ref{R:endpoint}). For any $\sigma\in \left( 0, \frac13\right)$, the negative Besov regularity  $D\in  B^{\frac{2\sigma}{1-\sigma}-1}_{\frac{p}{3},\infty}$ is optimal (see Theorem \ref{T:sharp_baire}). Moreover, for a general measure in a negative Besov space, the absolute continuity with respect to $\mathcal H^\gamma$ cannot be improved (see Remark \ref{R:negative measures sharp}). It follows that Theorem \ref{T:main D regularity} gives quite general, and sharp in the appropriate sense, rigidity results on the incompressible Euler equations\footnote{{Since the possibility for the inviscid limit to impose additional geometric properties on $D$ is very unclear, the case in which the Euler solution arises as a physical realization might, in principle, be quite different.}}. They establish the positive side of an intermittent version of the Onsager conjecture, which is a natural extension 
in view of the breakdown of self-similarity and space-time homogeneity in real turbulent flows. Indeed, a direct consequence is the following intermittency-type statement.

 \begin{corollary}[Intermittency]\label{C:main intermittency}
 Let $u\in L^3_{x,t}$ be a weak solution to \eqref{E}. Assume that the Duchon--Robert distribution is a measure whose singular part with respect to the Lebesgue measure is  non-trivial and concentrated on a space-time set $S$ with $\dim_\mathcal{H} S = \gamma$. 
For all  $p\in [3,\infty]$ for which there exists $\sigma_p\in (0,1)$ such that $u\in L^p_t B^{\sigma_p}_{p,\infty}$, it must hold
    \begin{equation}\label{eq:intermittency}
         \frac{2\sigma_p}{1-\sigma_p}\leq 1- \frac{p-3}{p} (d+1-\gamma).
            \end{equation}
\end{corollary}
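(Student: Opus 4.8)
The plan is to derive \eqref{eq:intermittency} by contradiction, feeding the Hausdorff dimension of the concentration set into part $(i)$ of \cref{T:main D regularity}. Write $\gamma = \dim_{\mathcal{H}} S$ and fix an admissible exponent $p \in [3,\infty]$ with its associated $\sigma_p \in (0,1)$, so that $u \in L^p_t B^{\sigma_p}_{p,\infty}$ is a weak solution whose Duchon--Robert distribution is the given (real-valued, non-trivial) measure $D$; this is exactly the setting in which \cref{T:main D regularity} applies. I would then suppose, towards a contradiction, that \eqref{eq:intermittency} fails for this $p$, i.e.
\begin{equation*}
\frac{2\sigma_p}{1-\sigma_p} > 1 - \frac{p-3}{p}\,(d+1-\gamma),
\end{equation*}
and aim to contradict the non-triviality of $D$.

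The key step is the observation that the right-hand side of the condition \eqref{gamma_condition}, read as a function $s \mapsto 1 - \frac{p-3}{p}(d+1-s)$, is continuous and non-decreasing in $s$ (strictly increasing for $p>3$, constant for $p=3$). Thus the strict inequality above, valid at $s=\gamma$, remains valid for all $s$ in a right-neighbourhood of $\gamma$, and I can pick some $\gamma' > \gamma$ for which \eqref{gamma_condition} still holds with $\gamma$ replaced by $\gamma'$. Part $(i)$ of \cref{T:main D regularity} then gives $\abs{D} \ll \mathcal{H}^{\gamma'}$. Since $\gamma' > \gamma = \dim_{\mathcal{H}} S$, the very definition of Hausdorff dimension yields $\mathcal{H}^{\gamma'}(S) = 0$, whence $\abs{D}(S) = 0$ by absolute continuity (using, if part $(i)$ is stated only locally, inner regularity of $\abs{D}$ on a compact exhaustion of $S$). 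This contradicts the fact that $D$ is non-trivial and concentrated on $S$, for which $\abs{D}(S) = \abs{D}(\T^d \times (0,T)) > 0$.

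I do not expect a serious obstacle; the one point that needs care is precisely the passage from the critical to a super-critical exponent. Applying part $(i)$ directly at the critical value $\gamma = \dim_{\mathcal{H}} S$ only gives $\abs{D} \ll \mathcal{H}^{\gamma}$, which is insufficient because the critical Hausdorff measure $\mathcal{H}^{\gamma}(S)$ may be positive or even infinite. It is the \emph{openness} of the strict inequality in \eqref{gamma_condition}---a consequence of the monotone dependence of its right-hand side on the dimension parameter---that lets me trade the critical threshold for a genuine $\gamma' > \gamma$ at which $\mathcal{H}^{\gamma'}(S)$ vanishes, and this is what powers the contradiction.
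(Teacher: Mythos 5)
Your argument is correct and is essentially the paper's own proof: both exploit the openness of the strict inequality \eqref{gamma_condition} to pass from the critical dimension $\gamma=\dim_{\mathcal H}S$ to some $\gamma'>\gamma$ with $\mathcal H^{\gamma'}(S)=0$, then invoke part $(i)$ of \cref{T:main D regularity} to conclude $\abs{D}(S)=0$, contradicting concentration and non-triviality. The only cosmetic difference is that the paper phrases this as a contrapositive rather than a contradiction.
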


Note that as long as the dimension $\gamma$ is less than $d + 1$, then for $p > 3$ this bound implies the regularity index $\sigma_p$ must be strictly below $\frac13$. {In particular, the presence of any non-trivial lower-dimensional dissipation would necessarily result in a quantitative downward deviation from the Besov $\frac13$ regularity for all $p>3$, translating into the failure of the Kolmogorov prediction of linear structure functions exponents (see Section \ref{S:intermittency turbulence} for elaboration). As no assumption was made on the Lebesgue regular part of $D$, Corollary \ref{C:main intermittency} applies to the setting in which the dissipation has a full-dimensional piece as well. This improves over the existing literature and, given the difficulty in excluding full-dimensional parts experimentally, it has a wider and perhaps more realistic  range of applications.}

Theorem \ref{T:main D regularity} is a consequence of the following energy identity for $L^3_{x,t}$ weak solutions.

\begin{proposition}[Modified energy identity]\label{P:en_ident}
 Let $u\in L^3_{x,t}$ be a weak solution to \eqref{E} with Duchon--Robert distribution $D$. Let
     \begin{align}
     E^\ell&:=\frac{\abs{u-u_\ell}^2}{2},\\ 
     Q^\ell &:= \left(\frac{\abs{u-u_\ell}^2}{2} + (q-q_\ell) \right) (u-u_\ell), \\
     R^\ell&:= u_\ell\otimes u_\ell - (u\otimes u)_\ell,\\
     C^\ell &:= (u-u_\ell) \cdot \div R^\ell + (u-u_\ell)\otimes (u-u_\ell):\nabla u_\ell,
     \end{align}
     with $u_\ell$ the space mollification of $u$.
    For all $\ell>0$,  the following identity holds
     \begin{equation}
         \label{D_decomp}
         -D= (\partial_t+u_\ell \cdot \nabla) E^\ell +  \div Q^\ell + C^\ell \qquad \text{in } \mathcal D'_{x,t}.
     \end{equation}
\end{proposition}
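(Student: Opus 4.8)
The plan is to pass to the fluctuation field $w:=u-u_\ell$ and to recognize the whole content of \eqref{D_decomp} as the identification of the chain--rule defect of its transported energy with the Duchon--Robert distribution. First I would subtract the spatial mollification of \eqref{E}, namely $\partial_t u_\ell+\div(u_\ell\otimes u_\ell)+\nabla q_\ell=\div R^\ell$, from \eqref{E} itself; writing $\div(u\otimes u)-\div(u_\ell\otimes u_\ell)=(u\cdot\nabla)w+(w\cdot\nabla)u_\ell$ (which uses $\div u=\div u_\ell=0$), this gives the equation for $w$,
\[
\partial_t w+(u\cdot\nabla)w+(w\cdot\nabla)u_\ell+\nabla(q-q_\ell)=-\div R^\ell\qquad\text{in }\mathcal D'_{x,t}.
\]
Expanding the right--hand side of \eqref{D_decomp}, substituting $u=u_\ell+w$ and using $\div u_\ell=0$, the two transport fluxes recombine into $\div\big(u_\ell\tfrac{\abs w^2}{2}\big)+\div\big(\tfrac{\abs w^2}{2}w\big)=\div\big(u\tfrac{\abs w^2}{2}\big)$, so that \eqref{D_decomp} is equivalent to
\[
\mathcal T:=\partial_t\tfrac{\abs w^2}{2}+\div\Big(u\tfrac{\abs w^2}{2}\Big)=-D-(w\otimes w):\nabla u_\ell-\div\big((q-q_\ell)w\big)-w\cdot\div R^\ell .
\]

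The gain is that every term besides the distributional transport $\mathcal T$ is an honest element of $L^1_{loc}$: since $w\in L^3_{x,t}$ one has $w\otimes w\in L^{3/2}$, while $\nabla u_\ell$ and $\div R^\ell$ are smooth in space with spatial sup--norms controlled, for each fixed $\ell$, by $\norm{u(t)}_3$ and $\norm{u(t)}_3^2$; together with $q\in L^{3/2}_{x,t}$ (Calder\'on--Zygmund), H\"older in $x$ and $t$ places $(w\otimes w):\nabla u_\ell$, $(q-q_\ell)w$ and $w\cdot\div R^\ell$ in $L^1_{loc}$. For smooth solutions, dotting the $w$--equation with $w$ yields $\mathcal T=-(w\otimes w):\nabla u_\ell-\div((q-q_\ell)w)-w\cdot\div R^\ell$; for $L^3$ solutions this chain rule fails, and the whole proposition amounts to showing that the defect
\[
\tilde D:=\mathcal T+(w\otimes w):\nabla u_\ell+\div\big((q-q_\ell)w\big)+w\cdot\div R^\ell
\]
equals $-D$ for \emph{every} fixed $\ell>0$.

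To compute $\tilde D$ I would invoke the Duchon--Robert commutator at a fresh scale $\delta\to0$: the defect of the transport of $\tfrac{\abs w^2}{2}$ by $u$ is represented, up to the universal constant, by $\lim_{\delta\to0}\int_{\R^d}\nabla\rho_\delta(\xi)\cdot\big(u(\cdot+\xi)-u\big)\,\abs{w(\cdot+\xi)-w}^2\,d\xi$ in $\mathcal D'_{x,t}$. Applying the same lemma to $u$ in place of $w$ identifies the analogous trilinear limit with $-D$ (this is exactly \eqref{enbal_E}), so the claim reduces to showing that replacing $w$ by $u$ in the squared increment leaves the limit unchanged. As $w-u=-u_\ell$ is Lipschitz in space, its increments $u_\ell(\cdot+\xi)-u_\ell=O(\abs\xi)$ are of order $\delta$ on $\supp\nabla\rho_\delta$, and after Fubini the difference of the two trilinear expressions is controlled by
\[
\delta\,\norm{\nabla u_\ell}_{\infty}\int_{\R^d}\abs{\nabla\rho_\delta(\xi)}\,\abs{u(\cdot+\xi)-u}^2\,d\xi\;\lesssim\;\norm{\nabla u_\ell}_{\infty}\,\sup_{\abs\xi\le\delta}\norm{u(\cdot+\xi)-u}_{L^2_x}^2 ,
\]
whose limit vanishes as $\delta\to0$ by continuity of translations in $L^2_x\supset L^3_x$.

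This last step is the main obstacle, and it is the crux of the statement: it says that anomalous dissipation is a purely ultraviolet object, insensitive to the smooth coarse field $u_\ell$, which is precisely what forces $\tilde D=-D$ for every $\ell$ and not merely in the limit $\ell\to0$. Making it rigorous requires uniform-in-$\delta$ control of the trilinear remainder and a careful justification of the commutator representation of $\mathcal T$; by contrast the algebraic recombinations of the first step and the $L^1_{loc}$ bounds above are routine. (Equivalently, one may avoid $w$ altogether and combine the balance \eqref{enbal_E} for $u$ with the exact balance for $u_\ell$ and a cross balance for $u\cdot u_\ell$, organised by $E^\ell=\tfrac{\abs u^2}{2}-u\cdot u_\ell+\tfrac{\abs{u_\ell}^2}{2}$; the defect--carrying cubic terms then cancel, and the obstacle becomes the product rule for $\partial_t(u\cdot u_\ell)$, legitimised by the improved regularity $\partial_t u_\ell\in L^{3/2}_t$ read off from the mollified equation.)
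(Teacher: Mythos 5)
Your route is genuinely different from the paper's. The paper proves \eqref{D_decomp} by a single, purely algebraic manipulation of the weak formulation: it tests \eqref{E} with $u_\ell\varphi$ (legitimate because $u_\ell$ is smooth in space and $\partial_t u_\ell\in L^{3/2}_{t,\text{loc}}C^\infty_x$ from the mollified equation), combines this with the exact energy balance of the smooth field $u_\ell$, and watches the cubic and pressure terms cancel; the identity then holds for each fixed $\ell$ with no limiting procedure. You instead pass to $w=u-u_\ell$, reduce \eqref{D_decomp} to the statement that the chain-rule defect $\tilde D$ of $\tfrac{|w|^2}{2}$ transported by $u$ equals $-D$, and propose to prove this by comparing two Duchon--Robert commutator representations at a fresh scale $\delta\to 0$, using that increments of the Lipschitz field $u_\ell$ are $O(\delta)$ on $\supp\nabla\rho_\delta$. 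Your preliminary algebra (the $w$-equation, the recombination of the fluxes into $\div(u\tfrac{|w|^2}{2})$, the $L^1_{\mathrm{loc}}$ bookkeeping) is correct, and your replacement estimate does close: the error is $O(\delta\norm{\nabla u_\ell}_\infty)\int|\nabla\rho_\delta||\delta_\xi u|^2\,d\xi$, which vanishes pointwise in $t$ by continuity of translations and is dominated in $L^1_t$ by $\ell^{-1-d/3}\norm{u(t)}_{L^3}^3$. Your parenthetical alternative (cross balance for $u\cdot u_\ell$) is essentially the paper's proof.

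The weak point is exactly where you locate it, but it is sharper than ``careful justification'': the conclusion $\tilde D=-D$ hinges on the \emph{constant} in the commutator representation of $\tilde D$ coinciding with the Duchon--Robert constant $\tfrac14$ in the representation of $D$. These are two different lemmas: one is \cite{DR00} for the quadratic nonlinearity $\div(u\otimes u)$, the other is a representation for the renormalization defect of a \emph{forced linear} transport equation $\partial_t w+\div(u\otimes w)=F$, which you must derive (including checking that $w\cdot F_\delta+w_\delta\cdot F\to 2w\cdot F$, e.g.\ $\div((q-q_\ell)_\delta w)\to\div((q-q_\ell)w)$, and that the auxiliary flux corrections vanish). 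The constants do agree --- both derivations symmetrize $f(x)g(x-\xi)$ into $\tfrac12\delta_\xi(fg)-\tfrac12\delta_\xi f\,\delta_\xi g$ in the same way --- but writing ``up to the universal constant'' defers precisely the computation on which the factor in front of $D$ depends; had the transport constant been $\tfrac12$ rather than $\tfrac14$, your argument would output $-2D$. So as written the proposal has a genuine unfinished step, and even once completed it is considerably heavier machinery than the paper's finite algebraic identity, which never introduces a second mollification scale at all.
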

In Proposition \ref{P:decomposition_NS} the above identity is proved in the more general case of the Navier--Stokes equations. These identities split the dissipation into terms that are small in negative norms, i.e. the ones with $E^\ell$ and $Q^\ell$, and a term that is large in a positive norm, i.e. $C^\ell$. By optimizing in the choice of $\ell$, we are able to deduce quantitative rates when approximating $D$ with its space-time mollification.

\begin{proposition}[Mollification rates]\label{P:quant moll est}
     Assume that $u\in L^p_t B^\sigma_{p,\infty}$ is a weak solution to \eqref{E} for some   $p\in [3,\infty]$ and $\sigma \in \left(0,1\right)$, with Duchon--Robert distribution $D$. Let $\rho_\delta$ be a space-time Friedrichs' mollifier. For any $\varphi\in C^\infty_{x,t}$ with compact support there exists $\delta_0>0$, which depends only on the distance of the support of $\varphi$ from the boundary of the space-time domain\footnote{This is only used to guarantee that $\langle D*\rho_\delta,\varphi \rangle$ is well-defined.}, such that 
        \begin{equation}\label{moll_est_dissipation}
            \abs{\left\langle   D-D* \rho_\delta,\varphi\right\rangle   } \lesssim \delta^{\frac{2\sigma}{1-\sigma}} \norm{\varphi}_{W^{1,\frac{p}{p-3}}_{x,t}} \qquad \text{and} \qquad  \abs{\left\langle   D* \rho_\delta,\varphi\right\rangle   } \lesssim \delta^{\frac{2\sigma}{1-\sigma}-1} \norm{\varphi}_{L^{\frac{p}{p-3}}_{x,t}}
        \end{equation}
        for all $\delta<\delta_0$.
\end{proposition}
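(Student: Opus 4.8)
\emph{Approach.} The plan is to insert the pointwise-in-$\ell$ identity \eqref{D_decomp} from \cref{P:en_ident} into the two pairings and let the space--time mollifier act on the test function. Write $\psi_\delta:=\rho_\delta*\varphi$ and $\eta_\delta:=\varphi-\rho_\delta*\varphi$, so that (reflecting the mollifier, which we take even) $\langle D*\rho_\delta,\varphi\rangle=\langle D,\psi_\delta\rangle$ and $\langle D-D*\rho_\delta,\varphi\rangle=\langle D,\eta_\delta\rangle$. For a fixed $\ell>0$ to be chosen, I would substitute \eqref{D_decomp} and integrate by parts to move the derivatives in the transport term $(\partial_t+u_\ell\cdot\nabla)E^\ell$ and in $\div Q^\ell$ onto the smooth test function, using $\div u_\ell=0$ to rewrite $u_\ell\cdot\nabla E^\ell=\div(u_\ell E^\ell)$. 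For any $\chi\in\{\psi_\delta,\eta_\delta\}$ this gives
\begin{equation*}
-\langle D,\chi\rangle=-\langle E^\ell,\partial_t\chi\rangle-\langle E^\ell u_\ell,\nabla\chi\rangle-\langle Q^\ell,\nabla\chi\rangle+\langle C^\ell,\chi\rangle,
\end{equation*}
in which every field is \emph{cubic} in $u$ and hence paired in the duality $L^{p/3}_{x,t}$--$L^{p/(p-3)}_{x,t}$, the exponents being conjugate since $\tfrac3p+\tfrac{p-3}p=1$.

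\emph{Scaling of the terms.} The next step records the elementary Littlewood--Paley and commutator bounds, all understood locally around $\spt\varphi$ and combined with H\"older in time against $u\in L^p_t$: $\norm{u-u_\ell}_{L^p_x}\lesssim\ell^\sigma$, $\norm{\nabla u_\ell}_{L^p_x}\lesssim\ell^{\sigma-1}$, the Constantin--E--Titi estimate $\norm{R^\ell}_{L^{p/2}_x}\lesssim\ell^{2\sigma}$ (whence $\norm{\div R^\ell}_{L^{p/2}_x}\lesssim\ell^{2\sigma-1}$), and the pressure bound $\norm{q-q_\ell}_{L^{p/2}_x}\lesssim\ell^\sigma$. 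These yield
\begin{equation*}
\norm{E^\ell}_{L^{p/3}_{x,t}},\ \norm{E^\ell u_\ell}_{L^{p/3}_{x,t}},\ \norm{Q^\ell}_{L^{p/3}_{x,t}}\lesssim\ell^{2\sigma},\qquad \norm{C^\ell}_{L^{p/3}_{x,t}}\lesssim\ell^{3\sigma-1}.
\end{equation*}
Thus the only genuinely singular contribution is $C^\ell$, which blows up as $\ell\to0$ precisely when $\sigma<\tfrac13$.

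\emph{Optimisation.} For the second estimate I would take $\chi=\psi_\delta$, where $\norm{\partial_t\psi_\delta}_{L^r}+\norm{\nabla\psi_\delta}_{L^r}\lesssim\delta^{-1}\norm{\varphi}_{L^r}$ and $\norm{\psi_\delta}_{L^r}\lesssim\norm{\varphi}_{L^r}$ with $r=\tfrac{p}{p-3}$; the flux terms then contribute $\ell^{2\sigma}\delta^{-1}$ and the commutator term $\ell^{3\sigma-1}$. For the first estimate I would take $\chi=\eta_\delta$ and exploit the approximation rate $\norm{\eta_\delta}_{L^r}\lesssim\delta\norm{\nabla_{x,t}\varphi}_{L^r}$ together with $\norm{\partial_t\eta_\delta}_{L^r}+\norm{\nabla\eta_\delta}_{L^r}\lesssim\norm{\varphi}_{W^{1,r}_{x,t}}$; the flux terms now contribute $\ell^{2\sigma}$ and the commutator term $\ell^{3\sigma-1}\delta$. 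In both cases the choice
\begin{equation*}
\ell=\delta^{\frac{1}{1-\sigma}}
\end{equation*}
balances the two competing scalings, since $\tfrac{3\sigma-1}{1-\sigma}=\tfrac{2\sigma}{1-\sigma}-1$: the second estimate becomes $\lesssim\delta^{\frac{2\sigma}{1-\sigma}-1}\norm{\varphi}_{L^r}$ and the first becomes $\lesssim\delta^{\frac{2\sigma}{1-\sigma}}\norm{\varphi}_{W^{1,r}}$, as claimed. The threshold $\delta_0$ arises by requiring $\ell$ and $\delta$ small enough that all mollifications remain supported in a fixed neighbourhood of $\spt\varphi$ on which the local norms of $u$ are finite.

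\emph{Main obstacle.} The delicate point is the pressure. Since $q=(-\Delta)^{-1}\div\div(u\otimes u)$ is determined nonlocally, deriving the local bound $\norm{q-q_\ell}_{L^{p/2}_x}\lesssim\ell^\sigma$ from only local Besov control of $u$ requires splitting $q$ into a near-field piece, regularised by Calder\'on--Zygmund theory applied to a localisation of $u\otimes u$, and a far-field tail that is smooth on $\spt\varphi$ and therefore harmless. Once this localisation is in place, and one checks that the $Q^\ell$ and $C^\ell$ commutators are controlled uniformly on the enlarged compact set, the remaining steps reduce to the routine H\"older and mollification bounds indicated above.
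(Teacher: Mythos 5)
Your proposal is correct and follows essentially the same route as the paper: substitute the identity \eqref{D_decomp}, integrate by parts onto $\varphi*\rho_\delta$ or $\varphi-\varphi*\rho_\delta$, apply the mollification bounds \eqref{est_Eell}--\eqref{est_Cell} (the paper keeps $E^\ell$ in $L^{p/2}_{x,t}$ and Hölders three ways rather than lumping $E^\ell u_\ell$ into $L^{p/3}_{x,t}$, but locally this is the same), and balance with $\ell^{1-\sigma}=\delta$. The pressure point you flag as the main obstacle is handled in the periodic setting by the quoted double-regularity estimate \eqref{pressure_double}, with the localisation issue deferred to \cref{L:pressure} for general open sets.
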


When cutting $D$ into frequency shells, the two estimates in \eqref{moll_est_dissipation} can be used to control all the pieces in the Fourier space, from which the negative Besov regularity for $D$ is deduced. In a different but equivalent terminology, \eqref{moll_est_dissipation} can be read by duality as a quantitative convergence of $D* \rho_\delta$ to $D$ in a negative Sobolev norm and a controlled blowup of $D* \rho_\delta$ in $L^{\frac{p}{3}}_{x,t}$ respectively. Then, the abstract linear interpolation \cites{BL76,Lun09} leads to the desired negative fractional regularity. 

Although the main purpose of the identity \eqref{D_decomp} in this note, together with its analogue \eqref{D_decomp_NS} for the Navier--Stokes equations, is to prepare the ground for Theorem \ref{T:main D regularity}, in Section \ref{s: corollaries} we show how all the main energy-type results in this context follow as almost immediate corollaries. Some of them are well known, some are improved versions of previous results, some others are new.  We remark also that the results of this paper could be extended to any system of conservation laws, including the transport equation, a case which we outline in Section \ref{transport}, but also systems like magnetohydrodynamics \cites{aluie2010scale,eyink2006breakdown} or compressible fluids \cites{drivas2018onsager,bardos2019onsager}.  

In Section \ref{s: tools} we list the main tools that will be used in this note, in Section \ref{s: proofs} we prove our main results while in Section \ref{s: corollaries} all the corollaries. Finally, Section \ref{s: comments} is dedicated to discussions. These include: the main theoretical background, physical significance of the statements, comparison with previous related works, intermittency for the linear transport equation, sharpness of results and their link with the available convex-integration constructions.


\section{Tools} \label{s: tools} In this section we recall the main tools used in this note. 

\subsection{Besov spaces} 
We define the Besov spaces on $\R^N$ by means of the Littlewood--Paley decomposition (see e.g. \cite{BL76}*{Chapter 6}). Let $\phi =\phi(\xi)$ be a smooth function such that 
$$
\spt\phi\subset \left\{\xi\in \R^N \,:\, \frac{1}{2}<\abs{\xi}<2 \right\} \qquad \text{and} \qquad \sum_{k\in \Z}  \phi\left( 2^{-k} \xi\right)=1 \quad \forall \xi\in \R^N\setminus \{0\}.
$$

For any $k\in \Z$ we define $\phi_k,\psi \in C^\infty$ such that
\begin{align} \label{phi_k}
    \hat{\phi}_k(\xi)&:=\phi\left( 2^{-k}\xi\right) \qquad \text{and} \qquad \hat{\psi}(\xi) :=1- \sum_{k\geq 1} \phi\left( 2^{-k}\xi\right),
\end{align}
where the symbol $\hat{\cdot}$ denotes the Fourier transform.
Since $\hat{\psi}\in C^\infty$ and $\spt \hat \psi\subset B_2(0)$, then $\psi\in \mathcal S$, the space of Schwartz functions. Consequently, for any $p\in [1,\infty]$, any $\alpha\in \R$ and any tempered distribution $f$, we define the Besov norm on $\R^N$ by
\begin{equation}
    \label{besov_norm}
    \norm{f}_{B^\alpha_{p,\infty}}:=\norm{f*\psi}_{L^p} + \sup_{k\geq 1}\left( 2^{k\alpha} \norm{ f*\phi_k}_{L^p}\right).
\end{equation}

If $U \subset \R^N$ is an open set, and not necessarily the whole space, we say that $f$ belongs to $B^\alpha_{p, \infty}$ \quotes{locally inside $U$} if  $ \chi f  \in B^\alpha_{p, \infty}$ on the whole space for any smooth $\chi$ with compact support in $U$.

\subsection{Mollification estimates}
We recall some classical mollification estimates. For the proof see for instance \cites{drivas2026mathematical,YWW25}.

\begin{lemma}
For any function $f: \R^N \to \R$ denote by $f_\ell = f* \rho_\ell$, where $\rho$ is a Friedrichs' mollifier. Fix $\sigma, \alpha \in (0,1)$ and $p \in [1, \infty]$. There exist implicit constants independent of $\ell$ such that
\begin{align}
    \norm{f-f_\ell}_{L^p}\lesssim &  \ \ell^\sigma \norm{ f}_{B^\sigma_{p,\infty}},\label{moll_est_1}\\
    \norm{\nabla^n f_\ell}_{L^p}\lesssim & \ \ell^{\sigma-n} \norm{f}_{B^\sigma_{p,\infty}}\qquad\qquad\qquad\qquad n\geq 1,\label{moll_est_2}\\
    \norm{\nabla^n (f_\ell g_\ell- (fg)_\ell)}_{L^{p}}\lesssim& \  \ell^{\sigma+ \alpha-n}\norm{ f}_{B^\sigma_{rp,\infty}} \norm{ g}_{B^\alpha_{r'p,\infty}} \qquad\  n\geq 0, \ \ \ \,\frac{1}{r}+\frac{1}{r'}=1\label{moll_est_3}.
\end{align}
\end{lemma}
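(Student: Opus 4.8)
The plan is to prove the three classical mollification estimates using the Littlewood--Paley characterization of the Besov norm together with direct properties of the Friedrichs' mollifier. Throughout, I would work on $\R^N$ and exploit that $f_\ell - f = f * (\rho_\ell - \delta_0)$ and $\nabla^n f_\ell = f * \nabla^n \rho_\ell$, so that all three statements reduce to estimating convolutions with rescaled kernels whose Fourier supports or moment/decay properties are explicit.

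First I would prove \eqref{moll_est_1} and \eqref{moll_est_2}, which are standard. For \eqref{moll_est_1}, I would split $f = f * \psi + \sum_{k \geq 1} f * \phi_k$ and use that $\rho_\ell$ has unit mass together with a first-order Taylor expansion (or directly a frequency-localized estimate): on each Littlewood--Paley block of frequency $\sim 2^k$, convolution against $\rho_\ell - \delta_0$ contributes a factor $\min(1, (2^k \ell))$ in operator norm on $L^p$ by Young's inequality, since $\widehat{\rho_\ell}(\xi) = \widehat{\rho}(\ell \xi)$ and $|\widehat{\rho}(\ell\xi) - 1| \lesssim \min(1, \ell|\xi|)$. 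Summing $\sum_{k \geq 1} \min(1, 2^k \ell)\, 2^{-k\sigma} \norm{f}_{B^\sigma_{p,\infty}}$ yields a geometric series dominated by $\ell^\sigma \norm{f}_{B^\sigma_{p,\infty}}$, using $\sigma \in (0,1)$. For \eqref{moll_est_2}, I would write $\nabla^n f_\ell = \sum_k (f * \phi_k) * \nabla^n \rho_\ell + (\text{low mode})$ and bound each piece by $\norm{\nabla^n \rho_\ell}_{L^1} \lesssim \ell^{-n}$ together with the block estimate, but it is cleaner to note $\norm{\nabla^n f_\ell}_{L^p} \lesssim \ell^{-n}\norm{f - f_{\ell/2}}_{L^p} + \text{lower order}$, or simply to split the derivative as acting on $f * \rho_\ell$ in frequency and sum $\sum_k 2^{k(n-\sigma)}\min(1,\ldots)$; the positive powers of $2^k$ are tamed by the rapid decay of $\widehat{\rho}(\ell\xi)$ at scales $2^k \gg \ell^{-1}$, again producing $\ell^{\sigma - n}$.

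The main work is \eqref{moll_est_3}, the commutator estimate for the mollified product. Here the key is the classical Constantin--E--Titi representation
\begin{equation}
    f_\ell g_\ell - (fg)_\ell = \int \rho_\ell(y)\,(f(x) - f(x-y))(g(x) - g(x-y))\,dy - (f - f_\ell)(g - g_\ell),
\end{equation}
valid pointwise, which exhibits the commutator as a quadratic expression in increments of $f$ and $g$ averaged against $\rho_\ell$. I would estimate the $L^p$ norm of the integral term by bringing the norm inside via Minkowski's inequality and applying Hölder in the form $\norm{\delta_y f \cdot \delta_y g}_{L^p} \leq \norm{\delta_y f}_{L^{rp}}\norm{\delta_y g}_{L^{r'p}}$ with the conjugate exponents $\frac1r + \frac1{r'}=1$; the Besov increment bound $\norm{\delta_y f}_{L^{rp}} \lesssim |y|^\sigma \norm{f}_{B^\sigma_{rp,\infty}}$ (equivalent to the Littlewood--Paley norm on $(0,1)$) then gives a factor $|y|^{\sigma + \alpha}$, and integrating against $\rho_\ell$, supported in $|y| \lesssim \ell$, produces $\ell^{\sigma+\alpha}$. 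The second product $(f-f_\ell)(g-g_\ell)$ is handled directly by Hölder plus \eqref{moll_est_1}. For the derivative case $n \geq 1$, I would instead differentiate the representation, letting $\nabla^n$ fall on $\rho_\ell$ in the convolution (gaining $\ell^{-n}$ from $\norm{\nabla^n\rho_\ell}_{L^1}\lesssim\ell^{-n}$) while keeping the increment structure, so the scaling becomes $\ell^{\sigma+\alpha-n}$.

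The step I expect to be the main obstacle is the bookkeeping in \eqref{moll_est_3} for $n \geq 1$: when the derivatives are distributed across the commutator, one must ensure that at least the increment structure survives so that the full $\ell^{\sigma+\alpha}$ gain is retained rather than losing regularity to the differentiation, and that the equivalence between the Besov norm \eqref{besov_norm} and the finite-difference (modulus of continuity) characterization is invoked correctly for the relevant integrability exponents $rp$ and $r'p$. I would therefore state at the outset the standard equivalence $\sup_{|y|>0}|y|^{-\sigma}\norm{\delta_y f}_{L^q} \sim \norm{f}_{B^\sigma_{q,\infty}}$ for $\sigma \in (0,1)$ and $q \in [1,\infty]$, and then the remaining computations are routine applications of Young's, Hölder's, and Minkowski's inequalities.
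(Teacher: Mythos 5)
The paper does not actually prove this lemma --- it is stated under the heading ``We recall some classical mollification estimates'' and used as a black box --- so there is no in-paper argument to compare against; your proposal must stand on its own. For \eqref{moll_est_1}, \eqref{moll_est_2}, and the $n=0$ case of \eqref{moll_est_3} it does: the Littlewood--Paley block bound $\min(1,2^k\ell)$ (proved in physical space via the unit mass of $\rho_\ell$ and a first-order increment bound, since the pointwise multiplier estimate $|\widehat\rho(\ell\xi)-1|\lesssim\min(1,\ell|\xi|)$ alone does not give an $L^p$ operator bound for $p\neq2$), the vanishing of $\int\nabla^n\rho_\ell$ for $n\geq1$, the Constantin--E--Titi identity, and the finite-difference characterization $\norm{\delta_yf}_{L^q}\lesssim|y|^\sigma\norm{f}_{B^\sigma_{q,\infty}}$ are exactly the standard ingredients, and your handling of them is correct.

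The one step that, as written, would fail is \eqref{moll_est_3} for $n\geq1$. You propose to ``differentiate the representation, letting $\nabla^n$ fall on $\rho_\ell$,'' but in the identity
\begin{equation}
(fg)_\ell-f_\ell g_\ell=\int\rho_\ell(y)\,\delta_yf(x)\,\delta_yg(x)\,dy-(f-f_\ell)(g-g_\ell),
\end{equation}
the $x$-dependence of the first term sits entirely in $\delta_yf(x)\,\delta_yg(x)$, so $\nabla^n_x$ cannot be thrown onto $\rho_\ell(y)$ there, and the second term $(f-f_\ell)(g-g_\ell)$ contains no mollifier at all; moreover neither term is individually $n$-times differentiable when $\sigma,\alpha<1$, so the identity cannot be differentiated term by term. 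The repair --- which you correctly anticipate as the delicate point but do not supply --- is to differentiate $(fg)_\ell$ and $f_\ell g_\ell$ separately: for a multi-index $a$ with $|a|=n\geq1$, write $\partial^a(fg)_\ell=\partial^a\rho_\ell*(fg)$ and use that $\int\partial^a\rho_\ell(y)\,y^c\,dy=0$ for $|c|<n$ to replace $f(x-y)g(x-y)$ by $\delta_yf(x)\,\delta_yg(x)+f(x)g(x-y)+g(x)f(x-y)$ under the integral; expand $\partial^a(f_\ell g_\ell)$ by Leibniz; the surviving terms are $\int\partial^a\rho_\ell(y)\,\delta_yf\,\delta_yg\,dy$, the cross terms $(f-f_\ell)\partial^ag_\ell$ and $(g-g_\ell)\partial^af_\ell$, and products $\partial^bf_\ell\,\partial^cg_\ell$ with $|b|,|c|\geq1$, each of which is $O(\ell^{\sigma+\alpha-n})$ in $L^p$ by H\"older with exponents $rp,r'p$ together with \eqref{moll_est_1}, \eqref{moll_est_2} and the increment bound. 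With this substitution your argument is complete; without it, the $n\geq1$ case is a genuine gap.
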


From \cites{CDF20,Isett23,ColDeRos} it is known that the pressure enjoys the double regularity
\begin{equation}\label{pressure_double}
    \|q\|_{L^{\frac{p}{2}}_t B^{2\sigma}_{\frac{p}{2},\infty}}\lesssim \|u\|^2_{L^p_t B^\sigma_{p,\infty}}\
\end{equation}
for any $p\in (2,\infty]$ and $\sigma \in ( 0,1)$. Then, the following is a  consequence of \eqref{moll_est_1}, \eqref{moll_est_2}, \eqref{moll_est_3} and \eqref{pressure_double}.

\begin{corollary} 
Let $p\in [3,\infty]$ and $\sigma \in (0,1)$. Let $E^\ell$, $Q^\ell$, $R^\ell, C^\ell$ be the quantities defined in the statement of Proposition \ref{P:en_ident}. There exist implicit constants independent of $\ell$ such that
\begin{align}
    \norm{E^\ell}_{L^\frac{p}{2}_{x,t}}&\lesssim \ell^{2\sigma} \norm{u}^2_{L^p_t B^\sigma_{p,\infty}} \label{est_Eell}, \\
     \norm{Q^\ell}_{L^\frac{p}{3}_{x,t}}&\lesssim \ell^{3\sigma} \norm{u}^3_{L^p_t B^\sigma_{p,\infty}}\label{est_Qell}, \\
         \norm{R^\ell}_{L^\frac{p}{2}_{x,t}} + \ell  \norm{\div R^\ell}_{L^\frac{p}{2}_{x,t}}&\lesssim \ell^{2\sigma} \norm{u}^2_{L^p_t B^\sigma_{p,\infty}}\label{est_Rell}, \\
         \norm{C^\ell}_{L^\frac{p}{3}_{x,t}}&\lesssim \ell^{3\sigma-1} \norm{u}^3_{L^p_t B^\sigma_{p,\infty}}\label{est_Cell}.
\end{align}
\end{corollary}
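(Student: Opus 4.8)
The plan is to prove all four estimates by the same two-step scheme. Since the subscript $\ell$ denotes \emph{spatial} mollification, I would first freeze the time slice $t$ and bound the spatial $L^q_x$ norm of each quantity using the mollification estimates \eqref{moll_est_1}--\eqref{moll_est_3} together with Hölder's inequality in $x$; this produces a power of $\ell$ times a product of the instantaneous norms $\|u(\cdot,t)\|_{B^\sigma_{p,\infty}}$ (and $\|q(\cdot,t)\|_{B^{2\sigma}_{p/2,\infty}}$ where the pressure enters). Then I would integrate in $t$, using Hölder in time to reassemble the mixed norms $\|u\|_{L^p_tB^\sigma_{p,\infty}}$ and, for the pressure contribution, the double regularity \eqref{pressure_double}. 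All implicit constants arise from the lemma and from Hölder, hence are manifestly independent of $\ell$.

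For \eqref{est_Eell} I would write $\|E^\ell(\cdot,t)\|_{L^{p/2}_x}=\tfrac12\|u-u_\ell\|_{L^p_x}^2\lesssim \ell^{2\sigma}\|u(\cdot,t)\|_{B^\sigma_{p,\infty}}^2$ by \eqref{moll_est_1}, and then take the $L^{p/2}_t$ norm, which converts the squared Besov norm into $\|u\|_{L^p_tB^\sigma_{p,\infty}}^2$. For \eqref{est_Rell}, each entry of $R^\ell$ is the commutator $(u_\ell)_i(u_\ell)_j-(u_iu_j)_\ell$, so \eqref{moll_est_3} applies verbatim (componentwise) with base integrability $p/2$, $r=r'=2$, $\alpha=\sigma$, and $n=0,1$, giving the rates $\ell^{2\sigma}$ and $\ell^{2\sigma-1}$; multiplying the second by $\ell$ and integrating in $L^{p/2}_t$ closes it. For \eqref{est_Cell} I would apply Hölder in $x$ to the two summands, bounding them by $\|u-u_\ell\|_{L^p_x}\|\div R^\ell\|_{L^{p/2}_x}$ and $\|u-u_\ell\|_{L^p_x}^2\|\nabla u_\ell\|_{L^p_x}$, and insert \eqref{moll_est_1}, \eqref{moll_est_2} (with $n=1$), and the $\div R^\ell$ bound just obtained; both terms produce the same rate $\ell^{\sigma}\cdot\ell^{2\sigma-1}=\ell^{2\sigma}\cdot\ell^{\sigma-1}=\ell^{3\sigma-1}$, after which an $L^{p/3}_t$ integration finishes.

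The one genuinely delicate term is the pressure contribution to \eqref{est_Qell}. Splitting $Q^\ell=E^\ell(u-u_\ell)+(q-q_\ell)(u-u_\ell)$, the cubic velocity part is immediate since $\|E^\ell(u-u_\ell)(\cdot,t)\|_{L^{p/3}_x}\lesssim \|u-u_\ell\|_{L^p_x}^3\lesssim \ell^{3\sigma}\|u(\cdot,t)\|_{B^\sigma_{p,\infty}}^3$. For the pressure part I would use Hölder in $x$ with exponents $p/2$ and $p$ to obtain $\|q-q_\ell\|_{L^{p/2}_x}\|u-u_\ell\|_{L^p_x}$, then Hölder in $t$ with the same exponents (note $\tfrac2p+\tfrac1p=\tfrac3p$), and finally \eqref{pressure_double}; granting the spatial rate $\|q-q_\ell\|_{L^{p/2}_x}\lesssim \ell^{2\sigma}\|q(\cdot,t)\|_{B^{2\sigma}_{p/2,\infty}}$ this assembles to $\ell^{3\sigma}\|u\|_{L^p_tB^\sigma_{p,\infty}}^3$.

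The subtlety I expect to be the main obstacle is precisely that spatial rate: applying \eqref{moll_est_1} to $q$ requires the regularity index $2\sigma$, which leaves the range $(0,1)$ of the lemma as soon as $\sigma>\tfrac12$, and the generic mollification error saturates at $\ell^1$ rather than $\ell^{2\sigma}$. To recover the sharp rate throughout $2\sigma\in(1,2)$ I would exploit the vanishing first moment of the symmetric Friedrichs mollifier and run a Littlewood--Paley/Bernstein argument, splitting frequencies at $2^k\sim\ell^{-1}$: the high modes contribute $\sum_{2^k\gtrsim \ell^{-1}}2^{-2\sigma k}\sim \ell^{2\sigma}$, while the low modes, controlled by the second-order Taylor remainder $\ell^2\|\nabla^2(q*\phi_k)\|_{L^{p/2}}\lesssim \ell^2 2^{(2-2\sigma)k}$, sum to $\ell^{2\sigma}$ exactly because $2-2\sigma>0$. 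This upgrades \eqref{moll_est_1} to the full range $(0,2)$ and legitimizes the $\ell^{3\sigma}$ rate for $Q^\ell$ for every $\sigma\in(0,1)$; the remaining three estimates involve only $B^\sigma$ with $\sigma\in(0,1)$ and need no such extension.
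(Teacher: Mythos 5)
Your argument is correct and follows essentially the same route as the paper, which states this corollary without proof as a direct consequence of the mollification estimates \eqref{moll_est_1}--\eqref{moll_est_3}, H\"older, and the pressure double regularity \eqref{pressure_double}. Your extra Littlewood--Paley step extending the mollification rate to regularity index $2\sigma\in(1,2)$ for the pressure term (valid since a Friedrichs mollifier can be taken even, hence with vanishing first moment) correctly patches a point the paper leaves implicit when $\sigma>\tfrac12$.
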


\subsection{Radon measures}
We follow the presentation from \cite{Magg12}. Let $U\subset \R^N$ be open. A real-valued Radon measure $\mu$ on $U$ is a bounded linear functional over the space of continuous functions with compact support in $U$. The variation measure of $\mu$, denoted by $|\mu|$, is the non-negative measure such that 
$$
\langle |\mu|,\varphi\rangle = \int \varphi \, d|\mu| :=\sup_{g\in C^0, \, |g|\leq \varphi} \langle \mu, g \rangle \qquad \forall \varphi \in C^0_c,\,\varphi \geq 0.
$$
Clearly, $|\mu|=\mu$ for any non-negative $\mu$. By the Riesz theorem, a Radon measure $\mu$ can be identified with a countably additive set function defined over the Borel subsets of $U$, and $|\mu|(K)<\infty$ for any compact set $K\subset U$. Radon measures are inner and outer regular, meaning that
$$
|\mu|(A) = \sup\{|\mu| (K) \, : \, K\subset A, \, K \text{ compact}  \}
$$
and 
$$
|\mu|(A) =\inf \{|\mu| (O) \, : \, A\subset O, \, O \text{ open}   \}
$$
hold for any Borel set $A\subset U$.

Given a Borel set $S\subset U$ we denote by $\mu\llcorner S$ the measure defined as 
$$
\mu\llcorner S(A):= \mu(A\cap S) \qquad \forall A\subset U, \, A \text{ Borel.}
$$
We say that $\mu$ is concentrated on $S$ if $\mu \equiv \mu\llcorner S$. Consequently, two measure $\mu_1$ and $\mu_2$ are said to be singular with respect to each other if they are concentrated on disjoint sets, i.e. $\mu_1\equiv \mu_1\llcorner S_1$ and $\mu_1\equiv \mu_1\llcorner S_2$ for some sets $S_1,S_2$ such that $S_1\cap S_2=\emptyset$.

The support of a distribution $L$, denoted by $\spt L$, is the complement of the union of all the open sets $O$ such that $\langle L,\varphi\rangle =0$ for all test functions $\varphi$ with compact support in $O$.  It follows that any measure is concentrated on its support. By the Hahn decomposition theorem, for any real-valued measure $\mu$ we find two disjoint Borel sets $S_+,S_-\subset U$ such that $\mu\llcorner S_{\pm}\geq 0$ and $S_+\cup S_-=U$. Consequently, $\mu=\mu\llcorner S_{+}-\mu\llcorner S_{-}$ is a decomposition of $\mu$ into a positive and a negative part. 
It holds $|\mu|= \mu\llcorner S_{+}+\mu\llcorner S_{-}$ as measures. In particular, $\mu$ is concentrated on $S$ if and only if $|\mu| (S^c)=0$. 

Let $\mu$ and $\lambda$ be two Borel measures, $\lambda$ non-negative. We say that $\mu$ is absolutely continuous with respect to $\lambda$, written as $\mu\ll \lambda$, if $|\mu| (A)=0$ for all Borel sets $A$ such that $\lambda(A)=0$. This is equivalent to $|\mu|\ll \lambda$.

\subsection{Functional inequalities}

We recall the following.

\begin{lemma}[Shinbrot \cite{S74}*{Lemma 4.2}]\label{L:shin}
Let $v$ be a time dependent vector field and $f,g$ be two functions in space-time. Let $\frac{2}{p}+\frac{2}{m}=1$ with $p\geq 4$. Then 
$$ \abs{\int g v\cdot \nabla f} \leq \norm{v}_{L^m_t L^p_x} \norm{\nabla f}_{L^2_{x,t}} \norm{ g}_{L^\infty_tL^2_x}^{2-\frac{m}{2}}\norm{g}_{L^m_t L^p_x}^{\frac{m}{2}-1}.
$$
\end{lemma}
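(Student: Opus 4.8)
The plan is to prove the inequality by two successive applications of H\"older's inequality --- first in the space variable at each fixed time, then in time --- interleaved with a single Lebesgue interpolation of $g$. The whole argument is driven by the structural constraint $\frac{2}{p}+\frac{2}{r}=1$, equivalently $\frac1p+\frac1r=\frac12$, which is exactly what makes the exponents close up; note this forces $r=\frac{2p}{p-2}\in(2,4]$, so that $2<r\le p$.

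First I would fix $t$ and apply H\"older in $x$ to the three factors $v$, $\nabla f$, $g$ with exponents $p$, $2$, and $r$ respectively. This is admissible precisely because $\frac1p+\frac12+\frac1r=1$, which is the hypothesis rewritten. This gives, for a.e. $t$,
$$\int_{\T^d}\abs{g}\,\abs{v}\,\abs{\nabla f}\,\d x\le \norm{v(t)}_{L^p_x}\norm{\nabla f(t)}_{L^2_x}\norm{g(t)}_{L^r_x}.$$

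Next, since $2<r\le p$, I interpolate the middle Lebesgue exponent: writing $\frac1r=\frac{\theta}{2}+\frac{1-\theta}{p}$ and solving yields $\theta=\frac{p-4}{p-2}=2-\frac r2$ and correspondingly $1-\theta=\frac{r}{2}-1=\frac rp$. The elementary interpolation inequality then gives $\norm{g(t)}_{L^r_x}\le\norm{g(t)}_{L^2_x}^{\theta}\norm{g(t)}_{L^p_x}^{1-\theta}$. Estimating $\norm{g(t)}_{L^2_x}^{\theta}\le\norm{g}_{L^\infty_tL^2_x}^{\theta}$ and pulling this constant out of the time integral, it remains to bound
$$\int_0^T\norm{v(t)}_{L^p_x}\,\norm{\nabla f(t)}_{L^2_x}\,\norm{g(t)}_{L^p_x}^{1-\theta}\,\d t.$$

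Finally I would apply H\"older in time to these three factors with exponents $r$, $2$, and $p$, which is again admissible by $\frac1r+\frac12+\frac1p=1$. The first two factors produce exactly $\norm{v}_{L^r_tL^p_x}$ and $\norm{\nabla f}_{L^2_{x,t}}$, while the last gives $\bigl(\int_0^T\norm{g(t)}_{L^p_x}^{(1-\theta)p}\,\d t\bigr)^{1/p}$. Here the algebra closes: since $(1-\theta)p=r$, this integral equals $\bigl(\int_0^T\norm{g(t)}_{L^p_x}^{r}\,\d t\bigr)^{1/p}=\norm{g}_{L^r_tL^p_x}^{r/p}=\norm{g}_{L^r_tL^p_x}^{1-\theta}$. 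Recombining the two powers of $g$ with $\theta=2-\frac r2$ and $1-\theta=\frac r2-1$ gives precisely the asserted bound. The argument involves no real obstacle beyond this exponent bookkeeping; the only point to verify is that the constraint $\frac2p+\frac2r=1$ simultaneously makes both H\"older applications legitimate and forces the interpolation parameter $\theta$ to match the powers of $g$ appearing in the statement.
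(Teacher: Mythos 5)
Your proof is correct: the constraint $\tfrac2p+\tfrac2r=1$ indeed makes the space H\"older application ($p,2,r$), the interpolation $\|g\|_{L^r_x}\le\|g\|_{L^2_x}^{2-r/2}\|g\|_{L^p_x}^{r/2-1}$, and the time H\"older application ($r,2,p$) all close up exactly as you compute, including the endpoint cases $p=4$ and $p=\infty$. The paper merely recalls this lemma from Shinbrot without proof, and your argument is the standard one behind it.
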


We will need a Young's inequality for convolutions in negative spaces. For any $p\in [1,\infty]$, we denote by $W^{-1,p}$ the dual of $W^{1,p'}$ with $\frac{1}{p}+\frac{1}{p'}=1$.

\begin{lemma}\label{L:young}
On $\R^N$, let $L\in \mathcal D'$ be with compact support and $\phi\in \mathcal S$. For any $p\in [1,\infty]$ it holds
$$
\norm{ L*\phi}_{L^p} \leq \norm{L}_{W^{-1,p}} \norm{\phi}_{W^{1,1}}.
$$
\end{lemma}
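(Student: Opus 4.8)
The plan is to prove the Young-type inequality $\norm{T*\phi}_{L^p} \leq \norm{T}_{W^{-1,p}} \norm{\phi}_{W^{1,1}}$ by reducing it to an ordinary $L^p$--$L^1$ convolution estimate after transferring one derivative from $T$ onto $\phi$. The starting observation is that the $W^{-1,p}$ norm of a compactly supported distribution $T$ measures its duality against $W^{1,p'}$ test functions, or equivalently that $T$ admits a representation $T = f_0 + \sum_{j=1}^N \partial_j f_j$ with $\sum_j \norm{f_j}_{L^p} \lesssim \norm{T}_{W^{-1,p}}$. The convolution $T*\phi$ is a smooth function since $\phi\in\mathcal S$, so its $L^p$ norm is literally computable.

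First I would fix $x$ and write $(T*\phi)(x) = \langle T, \phi(x-\cdot)\rangle$, then expand $T$ through its decomposition so that $(T*\phi)(x) = (f_0 * \phi)(x) - \sum_{j=1}^N (f_j * \partial_j\phi)(x)$, where the sign and the landing of the derivative on $\phi$ come from integrating by parts in the convolution (equivalently, $\partial_j f_j * \phi = f_j * \partial_j \phi$ as distributions, using that $\phi$ is Schwartz). Next I would apply the classical Young inequality $\norm{g*h}_{L^p}\leq \norm{g}_{L^p}\norm{h}_{L^1}$ to each term, obtaining
\begin{equation*}
\norm{T*\phi}_{L^p} \leq \norm{f_0}_{L^p}\norm{\phi}_{L^1} + \sum_{j=1}^N \norm{f_j}_{L^p}\norm{\partial_j\phi}_{L^1}.
\end{equation*}
Finally I would bound $\norm{\phi}_{L^1}$ and each $\norm{\partial_j\phi}_{L^1}$ by $\norm{\phi}_{W^{1,1}}$ and collect the $L^p$ norms of the $f_j$ into $\norm{T}_{W^{-1,p}}$, taking the infimum over all admissible decompositions to arrive at the stated constant.

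The main technical point, and the one I expect to require the most care, is pinning down exactly how the dual norm $\norm{T}_{W^{-1,p}}$ relates to the decomposition $T=f_0+\sum_j\partial_j f_j$: one must be careful about whether the norm is defined via duality with $W^{1,p'}$ or via such a representation, and whether the infimum over representations realizes the dual norm with constant one (this is the standard characterization of negative Sobolev spaces, but the precise constant depends on the convention for the $W^{1,p'}$ norm). A clean alternative that sidesteps representation theory is to prove the estimate by duality directly: test $T*\phi$ against an arbitrary $g\in L^{p'}$, write $\langle T*\phi, g\rangle = \langle T, \tilde\phi * g\rangle$ with $\tilde\phi(x)=\phi(-x)$, bound this by $\norm{T}_{W^{-1,p}}\norm{\tilde\phi * g}_{W^{1,p'}}$, and then use Young's inequality $\norm{\tilde\phi * g}_{W^{1,p'}} = \norm{(\mathrm{Id}+|\nabla|)(\tilde\phi*g)}_{L^{p'}}\leq \norm{\phi}_{W^{1,1}}\norm{g}_{L^{p'}}$, since differentiation commutes with convolution and lands on $\phi$. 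Taking the supremum over $\norm{g}_{L^{p'}}\leq 1$ recovers $\norm{T*\phi}_{L^p}$ and yields the claim. I would likely present this duality argument as the cleaner route, as it makes the appearance of $\norm{\phi}_{W^{1,1}}$ transparent and avoids invoking the structure theorem for $W^{-1,p}$.
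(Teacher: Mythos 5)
Your preferred route (the duality argument: test $T*\phi$ against $g\in L^{p'}$, write $\langle T*\phi,g\rangle=\langle T,\tilde\phi*g\rangle$, bound by $\norm{T}_{W^{-1,p}}\norm{\tilde\phi*g}_{W^{1,p'}}$ and apply Young componentwise to get $\norm{\phi}_{W^{1,1}}\norm{g}_{L^{p'}}$) is exactly the paper's proof, and it is correct. Your first sketch via the representation $T=f_0+\sum_j\partial_j f_j$ would also work, but as you note it hinges on the normalization relating the dual norm to the infimum over representations, so the duality version you settle on is indeed the cleaner choice.
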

\begin{proof}
Since $L$ has compact support, then $L*\phi\in \mathcal S$. Let $\varphi \in C^\infty_c$ be arbitrary and $p'$ be the H\"older conjugate of $p$. Denote by $\tilde{\phi}(x) = \phi(-x)$. The standard Young's convolution inequality implies
$$ \abs{ \int (L*\phi) \, \varphi}=\abs{ \langle L,\varphi*\tilde{\phi}\rangle} \leq \norm{L}_{W^{-1,p}} \norm{\varphi*\tilde{\phi}}_{W^{1,p'}}\leq \norm{L}_{W^{-1,p}} \norm{ \phi}_{W^{1,1}} \norm{\varphi}_{L^{p'}}.
$$
The thesis follows by taking the supremum over $\norm{\varphi}_{L^{p'}}\leq 1$.
\end{proof}

\subsection{Fractal dimensions}
Given any $A\subset \R^N$ and $\gamma \geq 0$, for any $\delta >0$ we set 
$$\mathcal{H}^\gamma_\delta (A) := \inf \left\{ \sum_i r_i^\gamma \, :\, A\subset \bigcup_i B_{r_i}, \, r_i<\delta \text{ for all } i \right\}.$$
Then, the $\gamma$-dimensional Hausdorff measure is defined as
\begin{equation}
    \mathcal{H}^\gamma(A) := \sup_{\delta > 0} \mathcal H^{\gamma}_\delta(A). 
\end{equation}
This is a non-negative Borel measure on $\R^N$ by the classical Carathéodory construction, and $\mathcal H^N$ is equivalent to the $N$-dimensional Lebesgue measure. The Hausdorff dimension is obtained as
\begin{equation}
    \dim_{\mathcal{H}}A: = \inf \{ \gamma\geq 0\,:\,\mathcal{H}^\gamma(A) = 0 \}. 
\end{equation}
Similarly, the $\gamma$-dimensional upper Minkowski content is defined as
$$\overline{\mathcal{M}}^{\gamma}(A) := \limsup_{r \to 0} \frac{\mathcal{H}^N([A]_r)}{r^{N-\gamma}} \qquad \text{with }[A]_r = \left\{ x \in \R^N \colon \dist(x,A) < r\right\}.$$
Then, the corresponding upper Minkowski dimension is given by
$$\overline{\dim }_{\mathcal{M}} A := \inf \{\gamma \geq 0\,:\,\overline{ \mathcal{M}}^\gamma(A) = 0\}.  $$

\section{Proof of the main results} \label{s: proofs}

We begin by proving the decomposition \eqref{D_decomp} in the more general case of solutions to the Navier--Stokes equations
\begin{equation}\label{NS} \tag{NS}
\left\{\begin{array}{l}
\partial_t u^\nu +\div (u^\nu \otimes u^\nu) +\nabla q^\nu=\nu \Delta u^\nu \\
\div u^\nu=0
\end{array}\right. \qquad \text{in }\T^d \times (0,T).
\end{equation}
The incompressible Euler equations \eqref{E} correspond to the case $\nu =0$. In the viscous setting, for $u^\nu\in L^2_t H^1_x \cap L^3_{x,t}$, the energy balance reads as
\begin{equation}
    \label{enbal_NS}
(\partial_t - \nu \Delta)\frac{\abs{u^\nu}^2}{2} +\div  \left(\left(\frac{\abs{u^\nu}^2}{2}+q^\nu\right)u^\nu\right) + \nu \abs{\nabla u^\nu}^2=-D^\nu  \qquad \text{in } \mathcal D'_{x,t}.
\end{equation}
For any $L^2$ initial datum, solutions are known to exist in the class $u^\nu\in L^\infty_t L^2_x \cap L^2_t H^1_x$. These are known as Leray--Hopf weak solutions \cites{L34,H51}. Weak solutions for which \eqref{enbal_NS}  holds with a non-negative distribution $D^\nu$ are called ``suitable weak solutions" in  Caffarelli--Kohn--Nirenberg  \cite{CKN} or ``dissipative" in  Duchon--Robert  \cite{DR00}.

\begin{proposition}
    \label{P:decomposition_NS}
    Let $\nu \geq 0$. Let $u^\nu \in L^3_{x,t}$ be a weak solution to \eqref{NS} and let $D^\nu$ be the associated Duchon--Robert distribution defined by \eqref{enbal_NS}. If $\nu>0$ we additionally require $u^\nu\in  L^2_t H^1_x$. Letting $u^\nu_\ell$ be the space mollification of $u^\nu$, we set  
     \begin{align}
     E^{\ell,\nu}&:=\frac{\abs{u^\nu-u^\nu_\ell}^2}{2}, \\
     Q^{\ell,\nu} &:= \left(\frac{\abs{u^\nu-u^\nu_\ell}^2}{2} + (q^\nu-q^\nu_\ell) \right) (u^\nu-u^\nu_\ell) \\
     R^{\ell,\nu}&:= u^\nu_\ell\otimes u^\nu_\ell - (u^\nu\otimes u^\nu)_\ell \\
     C^{\ell,\nu} &:= (u^\nu-u^\nu_\ell) \cdot \div R^{\ell,\nu} + (u^\nu-u^\nu_\ell)\otimes (u^\nu-u^\nu_\ell):\nabla u^\nu_\ell.
     \end{align}
    Denote $\mathcal E^\nu:= D^\nu + \nu\abs{\nabla u^\nu}^2$. For any $\ell>0$ we have the identity
     \begin{equation}
         \label{D_decomp_NS}
         -\mathcal E^\nu= (\partial_t+u^\nu_\ell \cdot \nabla - \nu\Delta) E^{\ell,\nu} +  \div Q^{\ell,\nu} + C^{\ell,\nu} + \nu \abs{\nabla u^\nu_\ell}^2 -2\nu  \nabla u^\nu:\nabla u^\nu_\ell   \qquad \text{in } \mathcal D'_{x,t}.
     \end{equation}
\end{proposition}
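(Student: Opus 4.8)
The plan is to obtain \eqref{D_decomp_NS} by superposing three local energy balances and exploiting the algebraic splitting
\[
E^{\ell,\nu}=\tfrac12\abs{u^\nu}^2-u^\nu\cdot u^\nu_\ell+\tfrac12\abs{u^\nu_\ell}^2 .
\]
Throughout I suppress the superscript $\nu$ (writing $u=u^\nu$, $q=q^\nu$, $E^\ell=E^{\ell,\nu}$, etc.) and set $w:=u-u_\ell$ and $\mathcal L_\ell:=\partial_t+u_\ell\cdot\nabla-\nu\Delta$. The first balance is simply the defining relation \eqref{enbal_NS} for $\mathcal E^\nu=D^\nu+\nu\abs{\nabla u}^2$, namely $(\partial_t-\nu\Delta)\tfrac12\abs{u}^2+\div\big((\tfrac12\abs{u}^2+q)u\big)=-\mathcal E$, which I take as given. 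The crucial structural point is that the remaining two balances — for $\tfrac12\abs{u_\ell}^2$ and for the cross term $u\cdot u_\ell$ — are \emph{exact} distributional identities because $u_\ell$ is smooth in space, so no anomalous defect arises in them; the entire anomaly $D^\nu$ stays localized in the first balance. This is what forces the final cancellation to produce $-\mathcal E$ rather than the naive $-\nu\abs{\nabla w}^2$ that a purely formal energy estimate on the $w$--equation would give.

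First I mollify the momentum equation in space. Writing $(u\otimes u)_\ell=u_\ell\otimes u_\ell-R^\ell$, the field $u_\ell$ solves
\begin{equation*}
\partial_t u_\ell+\div(u_\ell\otimes u_\ell)+\nabla q_\ell=\nu\Delta u_\ell+\div R^\ell
\end{equation*}
in $\mathcal D'_{x,t}$. Since $u_\ell$ is $C^\infty$ in $x$ with $\div u_\ell=0$ and $\partial_t u_\ell$ lies in $L^{3/2}_t$ with values in smooth fields (read off from the mollified equation), dotting with $u_\ell$ and using $u_\ell\cdot\Delta u_\ell=\Delta\tfrac12\abs{u_\ell}^2-\abs{\nabla u_\ell}^2$ gives
\begin{equation*}
(\partial_t-\nu\Delta)\tfrac12\abs{u_\ell}^2+\div\big((\tfrac12\abs{u_\ell}^2+q_\ell)u_\ell\big)+\nu\abs{\nabla u_\ell}^2=u_\ell\cdot\div R^\ell .
\end{equation*}
For the cross term I compute $\partial_t(u\cdot u_\ell)=\partial_t u\cdot u_\ell+u\cdot\partial_t u_\ell$ and insert both momentum equations. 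Here every product either carries the smooth factor $u_\ell$ (so $u_\ell\cdot\div(u\otimes u)$, $u_\ell\cdot\nabla q$, $u_\ell\cdot\Delta u$ are defined by moving one derivative onto $u_\ell$) or pairs $u\in L^3_t$ with $\partial_t u_\ell\in L^{3/2}_t$. Collecting the viscous contribution via $u_\ell\cdot\Delta u+u\cdot\Delta u_\ell=\Delta(u\cdot u_\ell)-2\nabla u:\nabla u_\ell$ and the pressure terms into a divergence yields a closed distributional identity for $\partial_t(u\cdot u_\ell)$.

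I then assemble $\mathcal L_\ell E^\ell$ by forming (balance for $\tfrac12\abs{u}^2$) $-$ (balance for $u\cdot u_\ell$) $+$ (balance for $\tfrac12\abs{u_\ell}^2$) and adding the advection $u_\ell\cdot\nabla(\cdot)$ to each piece. The $\Delta(u\cdot u_\ell)$ terms cancel against $-\nu\Delta$; the $R^\ell$ contributions combine to $-w\cdot\div R^\ell$; the four pressure divergences telescope to $-\div((q-q_\ell)w)$; and the viscous cross--terms leave $2\nu\nabla u:\nabla u_\ell-\nu\abs{\nabla u_\ell}^2$, which is exactly what is absorbed by the terms $\nu\abs{\nabla u_\ell}^2-2\nu\nabla u:\nabla u_\ell$ on the right of \eqref{D_decomp_NS}. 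The only delicate bookkeeping is the purely convective remainder
\begin{equation*}
\mathcal A:=u_\ell\cdot\nabla\tfrac12\abs{u}^2-u_\ell\cdot\nabla(u\cdot u_\ell)+u_\ell\cdot\div(u\otimes u)+u\cdot\div(u_\ell\otimes u_\ell)-\div(\tfrac12\abs{u}^2u),
\end{equation*}
each summand of which is a well--defined distribution after shifting a derivative onto the smooth factor. A direct index computation using $\div u=\div u_\ell=0$ shows that two of its terms cancel identically and the rest collapse to $\mathcal A=-w_iw_j\partial_j u_i=-\div(\tfrac12\abs{w}^2w)-w\otimes w:\nabla u_\ell$. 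Substituting $\div Q^\ell=\div(\tfrac12\abs{w}^2w)+\div((q-q_\ell)w)$ and $C^\ell=w\cdot\div R^\ell+w\otimes w:\nabla u_\ell$ then gives \eqref{D_decomp_NS}; equivalently, in the compact form $-D^\nu=\mathcal L_\ell E^\ell+\div Q^\ell+C^\ell+\nu\abs{\nabla w}^2$, which reduces to \eqref{D_decomp} when $\nu=0$.

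The main obstacle is entirely the distributional rigor at the borderline integrability $u\in L^3_{x,t}$. The products $\partial_t u\cdot u_\ell$ and $u_\ell\cdot\div(u\otimes u)$ are meaningful only after moving derivatives onto the smooth factor $u_\ell$, and the Leibniz rule $\partial_t(u\cdot u_\ell)=\partial_t u\cdot u_\ell+u\cdot\partial_t u_\ell$ must be justified, e.g.\ by mollifying $u$ in time, applying the smooth calculus, and passing to the limit using $u_\ell\in L^3_t\cap W^{1,3/2}_t$ (with values in smooth--in--$x$ fields). Once these manipulations are legitimized, the statement is an exact algebraic rearrangement. For $\nu>0$ the hypothesis $u\in L^2_tH^1_x$ is precisely what gives meaning to $\nabla u:\nabla u_\ell$ and $\abs{\nabla u}^2$; for $\nu=0$ all viscous terms vanish and the argument is unchanged.
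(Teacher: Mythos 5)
Your proposal is correct and follows essentially the same route as the paper: the paper likewise combines the energy balance for $\tfrac12\abs{u}^2$, the weak formulation tested against $u_\ell\varphi$ (your cross-term balance), and the mollified equation together with its exact energy balance \eqref{NS_enbal_moll}, before collapsing the convective remainder to $-(u-u_\ell)\otimes(u-u_\ell):\nabla u_\ell$. Your organization via $E^{\ell,\nu}=\tfrac12\abs{u}^2-u\cdot u_\ell+\tfrac12\abs{u_\ell}^2$ and your handling of the borderline products by shifting derivatives onto the smooth factor $u_\ell$ match the paper's argument in all essentials.
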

Note that \eqref{D_decomp_NS} can be equivalently written as 
\begin{equation}\label{D_decomp_NS_new}
   -D^\nu= (\partial_t+u^\nu_\ell \cdot \nabla - \nu\Delta) E^{\ell,\nu} +  \div Q^{\ell,\nu} + C^{\ell,\nu} + \nu \abs{\nabla (u^\nu_\ell-u^\nu)}^2.
\end{equation}

\begin{proof}
To lighten the notation, we will suppress the superscript $\nu$ on all the quantities. Since we are implicitly assuming the pressure to be the unique zero-average solution to $-\Delta q=\div \div (u\otimes u)$, the Calderón--Zygmund estimates imply $q\in L^\frac{3}{2}_{x,t}$. This is enough to justify all the computations below.

Let $\varphi\in C^\infty_{x,t}$ be any compactly supported function. We will repeatedly use that both $u$ and $u_\ell$ are divergence-free. By straightforward manipulations on \eqref{enbal_NS} we get
\begin{align}
    \left\langle  \mathcal E, \varphi \right\rangle   & = \int \frac{\abs{u}^2}{2} (\partial_t+ \nu \Delta)\varphi + \left( \frac{\abs{u}^2}{2} +q\right) u \cdot \nabla \varphi 
    \\ & = \int  \frac{\abs{u-u_\ell}^2}{2} (\partial_t + u_\ell\cdot \nabla + \nu\Delta) \varphi + \left( \frac{\abs{u-u_\ell}^2}{2} + (q-q_\ell)\right) (u-u_\ell) \cdot \nabla \varphi 
    \\ & \quad  + \underbrace{\int u \cdot  \partial_t (u_\ell \varphi ) -  \varphi u\cdot \partial_t u_\ell - \frac{\abs{u_\ell}^2}{2} (\partial_t + \nu \Delta)\varphi - q_\ell u_\ell \cdot \nabla \varphi}_{I}
    \\ & \quad + \underbrace{\int  u\cdot u_\ell u \cdot \nabla \varphi - \frac{\abs{u_\ell}^2}{2} u \cdot \nabla \varphi +q_\ell u \cdot \nabla \varphi +qu_\ell \cdot \nabla \varphi + \nu u\cdot u_\ell \Delta \varphi.}_{II}
\end{align}
Recall that $u_\ell$ satisfies 
\begin{equation}\label{NS_moll}
    \partial_t u_\ell + \div(u_\ell \otimes u_\ell) + \nabla q_\ell - \nu \Delta u_\ell= \div R^\ell , \qquad R^\ell:=u_\ell\otimes u_\ell -(u\otimes u)_\ell.
\end{equation}
Then, the energy balance of $u_\ell$ reads as
\begin{equation}\label{NS_enbal_moll}
   ( \partial_t  - \nu \Delta)\frac{\abs{u_\ell}^2}{2} + \div \left( \left( \frac{\abs{u_\ell}^2}{2} +q_\ell\right) u_\ell \right)  + \nu \abs{\nabla u_\ell}^2 = u_\ell \cdot \div R^\ell. 
\end{equation}
Hence, using $u_\ell \varphi$ as a test function in the weak formulation of \eqref{NS}, together with \eqref{NS_moll} and \eqref{NS_enbal_moll}, we obtain
\begin{align}
    I& = \int  - u \otimes u  : \nabla (u_\ell \varphi) -q\div (u_\ell \varphi) - \nu u \cdot \Delta (u_\ell \varphi)\\
    & \quad + \int \varphi u \cdot \Big( \div(u_\ell \otimes u_\ell) +   \nabla q_\ell - \nu \Delta u_\ell - \div R^\ell\Big)
    \\ & \quad  - \int \left( \div\left( \frac{\abs{u_\ell}^2}{2} u_\ell \right) - u_\ell \cdot \div R^\ell + \nu \abs{\nabla u_\ell}^2  \right) \varphi. 
\end{align}
We integrate by parts the terms with the Laplacian 
\begin{equation}
    - \int u \cdot \Delta (u_\ell \varphi) + \varphi u \cdot \Delta u_\ell =  \int \nabla u : u_\ell \otimes \nabla \varphi + 2 \varphi \nabla u: \nabla u_\ell + \nabla u_\ell : u \otimes \nabla \varphi.
\end{equation}
Therefore
\begin{align}
I & = \int  - u \otimes u  : \nabla u_\ell \varphi - u \otimes u : u_\ell \otimes \nabla \varphi -qu_\ell \cdot \nabla \varphi - q_\ell u \cdot \nabla \varphi + \varphi u \cdot \div(u_\ell \otimes u_\ell) \\
& \quad + \nu\int \nabla u :u_\ell \otimes \nabla \varphi + \nabla u_\ell :u\otimes \nabla \varphi
    \\ & \quad- \int \left( (u-u_\ell) \cdot \div R^\ell + \div\left( \frac{\abs{u_\ell}^2}{2} u_\ell \right)  +\nu \abs{\nabla u_\ell}^2 - 2\nu \nabla u:\nabla u_\ell\right) \varphi.
\end{align}
We aim to compute $I+II$. Note that
\begin{align}
\int u\cdot u_\ell \Delta \varphi &= - \int \left(\nabla u:u_\ell \otimes \nabla \varphi +  \nabla u_\ell:u \otimes \nabla \varphi \right), \\
- \int \frac{\abs{u_\ell}^2}{2} u \cdot \nabla \varphi & = \int \varphi u\cdot \nabla \frac{\abs{u_\ell}^2}{2}.
\end{align}
Thus, since $a\otimes b : c\otimes d=(a\cdot c) ( b\cdot d)$ and noticing that all the terms with $q$ and $q_\ell$ cancel out, we achieve
\begin{align}
     I+II & = \int \left(- u \otimes u : \nabla u_\ell + u \cdot \div(u_\ell \otimes u_\ell) \right) \varphi\\
     & \quad +\int  \left(   u\cdot  \nabla \frac{\abs{u_\ell}^2}{2}  - \div \left( \frac{\abs{u_\ell}^2}{2} u_\ell \right) +  (u_\ell - u) \cdot \div R^\ell \right) \varphi  \\
     &\quad -\nu \int \Big(  \abs{\nabla u_\ell}^2 - 2 \nabla u:\nabla u_\ell\Big) \varphi.
\end{align}
A direct computation shows that
\begin{align}
    - u \otimes u : \nabla u_\ell & + u \cdot \div(u_\ell \otimes u_\ell)  +u\cdot \nabla \frac{\abs{u_\ell}^2}{2} 
   - \div \left( \frac{\abs{u_\ell}^2}{2} u_\ell \right) = - (u - u_\ell) \otimes (u-u_\ell) : \nabla u_\ell,
\end{align}
concluding the proof. 
\end{proof}

We are now ready to prove Proposition \ref{P:quant moll est}, Theorem \ref{T:main D regularity} and Corollary \ref{C:main intermittency}. The symbol $*$ will always denote the space-time convolution. 

\begin{proof}[Proof of Proposition \ref{P:quant moll est}]
    Fix $\varphi\in C^\infty_{x,t}$ with compact support and find $\delta_0>0$ small enough so that $\langle D*\rho_\delta,\varphi\rangle$ is well-defined $\forall \delta<\delta_0$.
By the identity \eqref{D_decomp} we have
\begin{align}
    \abs{\left\langle   D-D*\rho_\delta,\varphi\right\rangle}   &=\abs{\left\langle   D, \varphi-\varphi*\rho_\delta\right\rangle  } \\
    &\leq \norm{E^\ell}_{L^\frac{p}{2}_{x,t}} \left( \norm{\partial_t (\varphi - \varphi * \rho_\delta)}_{L^{\frac{p}{p-2}}_{x,t}} + \norm{u_\ell}_{L^p_{x,t}} \norm{\nabla (\varphi - \varphi * \rho_\delta)}_{L^{\frac{p}{p-3}}_{x,t}}\right)\\
    &\quad +\norm{Q^\ell}_{L^\frac{p}{3}_{x,t}}\norm{\nabla (\varphi - \varphi * \rho_\delta)}_{L^{\frac{p}{p-3}}_{x,t}} + \norm{C^\ell}_{L^\frac{p}{3}_{x,t}}\norm{\varphi - \varphi * \rho_\delta}_{L^{\frac{p}{p-3}}_{x,t}}.
\end{align}

Thus, by using \eqref{moll_est_1}, \eqref{est_Eell}, \eqref{est_Qell} and \eqref{est_Cell} we deduce
\begin{align}
    \abs{\left\langle   D-D*\rho_\delta,\varphi\right\rangle }&\lesssim \left( \ell^{2\sigma} + \ell^{3\sigma}\right) \norm{\varphi}_{W^{1,\frac{p}{p-3}}_{x,t}}+ \ell^{3\sigma-1}\delta \norm{\varphi}_{W^{1,\frac{p}{p-3}}_{x,t}} \lesssim \delta^{\frac{2\sigma}{1-\sigma}} \norm{\varphi}_{W^{1,\frac{p}{p-3}}_{x,t}},
\end{align}
where we have chosen $\ell^{1-\sigma}=\delta$ to obtain the last estimate. Similarly 
\begin{align}
    \abs{\left\langle   D*\rho_\delta,\varphi\right\rangle } & = \abs{ \left\langle   D, \varphi*\rho_\delta\right\rangle } \\
    &\leq \norm{E^\ell}_{L^\frac{p}{2}_{x,t}} \left( \norm{\partial_t  \varphi * \rho_\delta}_{L^{\frac{p}{p-2}}_{x,t}} + \norm{u_\ell}_{L^p_{x,t}} \norm{\nabla \varphi * \rho_\delta}_{L^{\frac{p}{p-3}}_{x,t}}\right)\\
    &\quad +\norm{Q^\ell}_{L^\frac{p}{3}_{x,t}}\norm{\nabla \varphi * \rho_\delta}_{L^{\frac{p}{p-3}}_{x,t}} + \norm{C^\ell}_{L^\frac{p}{3}_{x,t}}\norm{ \varphi * \rho_\delta}_{L^{\frac{p}{p-3}}_{x,t}}\\
    &\lesssim \left( \ell^{2\sigma} + \ell^{3\sigma}\right)\delta^{-1}\norm{\varphi}_{L^{\frac{p}{p-3}}_{x,t}}+ \ell^{3\sigma-1} \norm{\varphi}_{L^{\frac{p}{p-3}}_{x,t}}\\
    &\lesssim \delta^{\frac{2\sigma}{1-\sigma}-1} \norm{\varphi}_{L^{\frac{p}{p-3}}_{x,t}},
\end{align}
where we have chosen again $\ell^{1-\sigma}=\delta$. 
\end{proof}

\begin{proof}[Proof of Theorem \ref{T:main D regularity}] We break the proof down into steps.

 \underline{\textsc{Step 0}}: Localization. 
 
Since we are working in a bounded time interval, we need to localize. Multiplying $D$ by a smooth compactly supported function, we can think of $D$ as a distribution on the whole space $\R^{d+1}$ with compact support. The presence of an additional smooth multiplicative function does not affect the validity of Proposition \ref{P:quant moll est}. In particular, the localization of $D$ satisfies the very same estimates on $\R^{d+1}$ without the restriction $\delta< \delta_0$. By duality, they read as
\begin{equation}
    \label{duality_norm_D_moll}
    \norm{ D-D* \rho_\delta}_{W^{-1,\frac{p}{3}}_{x,t}} \lesssim \delta^{\frac{2\sigma}{1-\sigma}} \qquad \text{and} \qquad  \norm{   D* \rho_\delta}_{L^\frac{p}{3}_{x,t}} \lesssim \delta^{\frac{2\sigma}{1-\sigma}-1}
\end{equation}
for all $\delta>0$.

\underline{\textsc{Step 1}}: Proof of $D\in B^{\frac{2\sigma}{1-\sigma}-1}_{\frac{p}{3},\infty}$.

Recall the definition of the Besov norm \eqref{besov_norm}. Since $\psi\in \mathcal S_{x,t}$, the estimate on $D*\psi$ is trivial by using \eqref{enbal_E} directly, i.e. the fact that $D$ is a space-time divergence of a $L^{\frac{p}{3}}_{x,t}$ vector field suffices. We are left to prove
\begin{equation}
    \label{goal_besov_seminorm}
    \sup_{k\geq 1}\left( 2^{\left(\frac{2\sigma}{1-\sigma} -1\right)k} \norm{ D* \phi_k}_{L^\frac{p}{3}_{x,t}}\right)<\infty.
\end{equation}
A direct consequence of \eqref{phi_k} is 
\begin{equation}
    \label{phi_k_L1_uniform}
    \sup_{k\geq 1} \left(\norm{\phi_k }_{L^1_{x,t}} + 2^{-k} \norm{ \nabla_{x,t} \phi_k }_{L^1_{x,t}} \right)<\infty.
\end{equation}
For any $k\geq 1$ and any $\delta>0$ we split
$$
\norm{ D* \phi_k}_{L^\frac{p}{3}_{x,t}}\leq \norm{ (D-D*\rho_\delta)* \phi_k}_{L^\frac{p}{3}_{x,t}} +  \norm{ (D*\rho_\delta)* \phi_k}_{L^\frac{p}{3}_{x,t}}.
$$
By the Young's inequality for convolutions and the second estimate in \eqref{duality_norm_D_moll} we deduce
$$
 \norm{ (D*\rho_\delta)* \phi_k}_{L^\frac{p}{3}_{x,t}}\leq  \norm{ D*\rho_\delta}_{L^\frac{p}{3}_{x,t}} \norm{ \phi_k}_{L^1_{x,t}}\lesssim \delta^{\frac{2\sigma}{1-\sigma} -1}.
$$
Similarly, by Lemma \ref{L:young} and the first estimate in \eqref{duality_norm_D_moll} we get
$$ \norm{ (D-D*\rho_\delta)* \phi_k}_{L^\frac{p}{3}_{x,t}} \leq \norm{ D-D*\rho_\delta}_{W^{-1,\frac{p}{3}}_{x,t}} \norm{  \phi_k}_{W^{1,1}_{x,t}} \lesssim \delta^{\frac{2\sigma}{1-\sigma} }2^k.
$$
These estimates together imply 
$$
\norm{D* \phi_k}_{L^\frac{p}{3}_{x,t}}\lesssim \delta^{\frac{2\sigma}{1-\sigma} -1} + \delta^{\frac{2\sigma}{1-\sigma} }2^k,
$$
from which \eqref{goal_besov_seminorm} follows by the choice $\delta=2^{-k}$.

\underline{\textsc{Step 2}}: Proof of $(i)$.

Let $\gamma\geq 0$ be such that \eqref{gamma_condition} holds and let $A$ be a Borel set such that $\mathcal H^\gamma (A)=0$. Since $D$ is a Radon measure, we can assume $A$ to be compact without loss of generality. By the Hahn decomposition we find two disjoint sets $S_+, S_-$ such that $D\llcorner S_{\pm}\geq 0$ and $D=D\llcorner S_{+} - D\llcorner S_{-}$. The goal is to show $D\llcorner S_{\pm} (A)=D(S_{\pm}\cap A)=0$. This implies $\abs{D}(A)= D\llcorner S_{+}(A) + D\llcorner S_{-}(A)=0$, concluding the proof. We will only prove $D(S_+\cap A)=0$ since the case of $S_-$ is analogous. 

Let $\eps>0$. Since $D$ is a Radon measure we find an open set $O\supset S_+\cap A$ such that 
\begin{equation}
    \label{D_small_complement}
    \abs{D}(O\setminus (S_+\cap A))<\eps.
\end{equation}
By $\mathcal H^\gamma(A)=0$ and $A$ compact, we find a finite family of balls $\left\{ B_{r_i} \right\}_i$ in $\R^{d+1}$ with $r_i < 1$ such that 
\begin{equation}
    \label{hausdorff_small}
    S_+\cap A \subset A\subset  \bigcup_i B_{r_i}\subset \bigcup_i B_{4r_i}\subset O, \qquad \sum_i r_i^\gamma <\eps.
\end{equation}
On each ball we define a cutoff $\chi^i\in C^\infty_{x,t}$ such that
\begin{equation}
    \label{cutoff_i}
    0\leq \chi^i\leq 1,\qquad \chi^i \big|_{B_{r_i}}\equiv 1,\qquad \chi^i \big|_{B^c_{2r_i}}\equiv 0 \qquad \text{and} \qquad \abs{\nabla_{x,t}\chi^i} \leq 4 r_i^{-1}.
\end{equation}
Set $\chi:=\max_i \chi^i$ pointwise. Clearly $\chi$ is Lipschitz continuous with compact support in $O$ and thus $D$ can be applied to it. We split
$$
D(S_+\cap A)=\int_{S_{+}\cap A} \chi \,dD=\int_O \chi \,dD- \int_{O\setminus (S_+\cap A)} \chi \,dD.
$$
Since $\abs{\chi}\leq 1$, by \eqref{D_small_complement} we get 
$$
\abs{\int_{O\setminus (S_+\cap A)} \chi \,dD } \leq \abs{D}(O\setminus (S_+\cap A))<\eps.
$$
We are left to estimate $\abs{\left\langle   D, \chi\right\rangle }$. 
Note that the second estimate in \eqref{moll_est_dissipation} implies $D\equiv 0$ whenever $\sigma>\frac13$. Thus, assume $\sigma\leq \frac13$. Furthermore, we can also restrict to $p>3$, since $p=3$ forces $\sigma >\frac13$ by \eqref{gamma_condition}. Recall that $B^{-\beta}_{b,\infty}=(B^\beta_{b',1})^*$ for all $\beta\in \R$, $b>1$ (see for instance \cite{BL76}*{Corollary 6.2.8}). By \textsc{Step 1} we infer that\footnote{We are using the embedding $W^{\beta+\alpha,b}\subset B^{\beta}_{b,1}$ for all $\beta\in \R$, $\alpha>0$ and $b\in [1,\infty]$. See for instance \cite{BL76}*{Theorem 6.2.4}.}
$$
\abs{\left\langle   D,\chi\right\rangle } \lesssim \norm{\chi}_{B^{1-\frac{2\sigma}{1-\sigma}}_{\frac{p}{p-3},1}} \lesssim \norm{\chi }_{W^{1-\frac{2\sigma}{1-\sigma}+\alpha,\frac{p}{p-3}}_{x,t}},
$$
where $\alpha>0$ is a small parameter to be fixed later. By the sub-additivity of the $\frac{p}{p-3}-$th power of the fractional Sobolev norm on the maximum between two functions (see for instance \cite{Wma15}*{Lemma 2.8}) we deduce\footnote{We are assuming $\alpha$ small enough so that $1- \frac{2\sigma}{1-\sigma} + \alpha < 1$. This is possible since $\sigma>0$.}
$$
\abs{\left\langle   D,\chi\right\rangle }  \lesssim \left(\sum_i  \norm{\chi^i}^{\frac{p}{p-3}}_{W^{1-\frac{2\sigma}{1-\sigma}+\alpha,\frac{p}{p-3}}_{x,t}} \right)^\frac{p-3}{p}.
$$
By interpolating the fractional Sobolev norm, the choice of $\chi^i$ made in \eqref{cutoff_i} implies
$$
\norm{\chi^i}_{W^{1-\frac{2\sigma}{1-\sigma}+\alpha,\frac{p}{p-3}}_{x,t}}\leq \norm{ \chi^i}^{\frac{2\sigma}{1-\sigma}-\alpha}_{L^{\frac{p}{p-3}}_{x,t}} \norm{\nabla_{x,t}\chi^i}^{1-\frac{2\sigma}{1-\sigma}+\alpha}_{L^{\frac{p}{p-3}}_{x,t}}\lesssim r_i^{(d+1)\frac{p-3}{p} -1+\frac{2\sigma}{1-\sigma}-\alpha}
$$
where the implicit constant does not depend on $i$. Thus
$$
\abs{\left\langle   D,\chi\right\rangle } \lesssim \left(\sum_i r_i^{ \left(\frac{2\sigma}{1-\sigma}-1-\alpha\right) \frac{p}{p-3} + d+1} \right)^\frac{p-3}{p}.
$$
Since $\gamma$ satisfies the strict inequality \eqref{gamma_condition}, we can choose $\alpha = \alpha (\sigma, \gamma, d, p)>0$ small enough so that
$$ 
\left(\frac{2\sigma}{1-\sigma}-1-\alpha\right) \frac{p}{p-3} + d+1>\gamma. 
$$
This, together with \eqref{hausdorff_small}, yields  
$$
\abs{\left\langle   D,\chi\right\rangle} \lesssim \left(\sum_i r_i^{ \gamma} \right)^\frac{p-3}{p}< \eps^\frac{p-3}{p}.
$$
The proof is concluded since $\eps>0$ is arbitrary and $p>3$.

\underline{\textsc{Step 3}}: Proof of $(ii)$. 

Let $K$ be a compact set and fix any point $(x,t)\in K$. Pick $\chi^r \in C^\infty_{x,t}$ such that
$$ 0\leq \chi^r\leq 1,\qquad \chi^r \big|_{B_r(x,t)}\equiv 1,\qquad \chi^r \big|_{B^c_{2r}(x,t)}\equiv 0 \qquad \text{and} \qquad \abs{\nabla_{x,t}\chi^r}\leq 4 r^{-1}. $$
This choice of $\chi^r$ implies
    $$ \norm{\chi^r}_{L^\frac{p}{p-3}_{x,t}} + r \norm{\nabla_{x,t}\chi^r}_{L^\frac{p}{p-3}_{x,t}}\lesssim r^{\frac{p-3}{p}(d+1)}.
    $$
    Since $D\geq 0$, then $\left\langle   D,\chi^r\right\rangle  $ gives an upper bound for $D(B_r(x,t))$. By \eqref{moll_est_dissipation}, for any $\delta>0$ we bound
    \begin{align}
    \left\langle   D,\chi^r\right\rangle   &\leq \abs{\left\langle   D- D*\rho_\delta,\chi^r\right\rangle} +\abs{\left\langle    D*\rho_\delta,\chi^r\right\rangle}\\
    &\lesssim \delta^{\frac{2\sigma}{1-\sigma}}\norm{\chi^r}_{W^{1,\frac{p}{p-3}}_{x,t}}+ \delta^{\frac{2\sigma}{1-\sigma}-1}\norm{\chi^r}_{L^{\frac{p}{p-3}}_{x,t}} \\
    &\lesssim \delta^{\frac{2\sigma}{1-\sigma}} r^{\frac{p-3}{p}(d+1)-1}+\delta^{\frac{2\sigma}{1-\sigma}-1}r^{\frac{p-3}{p}(d+1)}\\
    &\lesssim r^{\frac{2\sigma}{1-\sigma}-1+\frac{p-3}{p}(d+1)},
\end{align}
where the last inequality is obtained by the choice $\delta=r$.
\end{proof}

\begin{remark}
    \label{R:endpoint}
    As stated, Theorem \ref{T:main D regularity} directly yields  $D\equiv 0$ if $\sigma >\frac13$. Indeed, $\mathcal H^\gamma\equiv 0$ on $\R^{d+1}$ if $\gamma>d+1$, a choice allowed by \eqref{gamma_condition}. When $p>3$, at the endpoint $\sigma =\frac13$  the conclusion can be strengthen to $D\in L^\frac{p}{3}_{x,t}$. Indeed,  the identity  \eqref{D_decomp} implies that $D$ is the distributional limit of $C^\ell$, which is bounded in $L^\frac{p}{3}_{x,t}$ by \eqref{est_Cell}. In particular $D\ll \mathcal H^{d+1}$. This was first noted in \cite{Is24}.
\end{remark}

\begin{remark}\label{R:density bound}
   Denote by $\tilde \gamma$ the exponent of $r$ in \eqref{density bound}. Such uniform estimate yields a $L^\infty_{x,t}$ bound on the upper $\tilde \gamma$-dimensional density of $D$, which is a strictly stronger property than $D\ll \mathcal H^{\tilde \gamma}$ (see \cite{Magg12}*{Chapter 6}). Therefore, one should not expect to deduce an optimal absolute continuity from an estimate like \eqref{density bound}. Indeed, and as shown in $(i)$, $D\ll \mathcal H^\gamma$ for some $\gamma>\tilde \gamma$. The two numerology together suggest that densities of $D$ with respect to $\mathcal H^\gamma$ belong to $L^\frac{p}{3}_{x,t}$ when restricted to $\gamma$-dimensional sets.
\end{remark}

\begin{remark}
  The fact that $D\in B^{\frac{2\sigma }{1-\sigma}-1}_{\frac{p}{3},\infty}$ could have been proved directly by the definition of the Besov norm and the splitting \eqref{D_decomp}. Indeed, by \eqref{D_decomp} and the Young's inequality for  convolutions we have
  \begin{align}
  \norm{D*\phi_k}_{L^\frac{p}{3}_{x,t}}&\leq \norm{E^\ell}_{L^\frac{p}{2}_{x,t}}\left( \norm{\partial_t \phi_k}_{L^1_{x,t}} + \norm{u_\ell}_{L^p_{x,t}}\norm{\nabla\phi_k}_{L^1_{x,t}} \right) \\
  &\quad + \norm{Q^\ell}_{L^\frac{p}{3}_{x,t}}\norm{\nabla\phi_k}_{L^1_{x,t}} + \norm{C^\ell}_{L^\frac{p}{3}_{x,t}}\norm{\phi_k}_{L^1_{x,t}} \\
  &\lesssim \ell^{2\sigma} 2^k  + \ell^{3\sigma-1}.
  \end{align}
  Choosing $\ell^{1-\sigma}=2^{-k}$ yields  the desired bound.
  Then, the mollification estimates \eqref{moll_est_dissipation} become a consequence and  no longer the cause of $D\in B^{\frac{2\sigma }{1-\sigma}-1}_{\frac{p}{3},\infty}$. Of course, and not surprisingly, the two are equivalent.
\end{remark}

   \begin{remark}
       \label{R:negative measures sharp} In step $2$ in the proof of Theorem \ref{T:main D regularity} we have shown how a negative Besov regularity of a measure implies its absolute continuity with respect to a suitable $\mathcal H^\gamma$. This  cannot be improved in general. Indeed, measures with negative regularity saturating the corresponding intermittency constraint can be constructed (see for instance \cite{AH96}*{Chapters 2 \& 5}). The sharpness of the absolute continuity $|D|\ll \mathcal H^\gamma$ in the range \eqref{gamma_condition} for measures $D$ in the form \eqref{enbal_E} might be more subtle because of the space-time anisotropy.
   \end{remark}
   
\begin{remark}\label{R:distrib support}
    Assume that $D$ is only a distribution, thus not necessarily a measure. By the very same argument given in step $2$ in the proof of Theorem \ref{T:main D regularity} it can still be proved that, if $\dim_{\mathcal H} \left(\spt D\right)\leq \gamma$, then 
    $$
    \frac{2\sigma}{1-\sigma}>1- \frac{p-3}{p} (d+1-\gamma) \qquad \Longrightarrow \qquad D\equiv 0.
    $$
\end{remark}

We conclude with the proof of the intermittency corollary.

\begin{proof}[Proof of Corollary \ref{C:main intermittency}]
   For $\gamma=d+1$ the result is trivial by \cites{CET94,DR00} (see also Corollary \ref{C:cet} below). Therefore, let $\gamma\in [0,d+1)$. Assume that $D=D_{\rm ac} + D_{\rm sing}$ with $D_{\rm ac}\ll \mathcal{H}^{d+1}$ and $D_{\rm sing}$ concentrated on a set $S$ with $\dim_{\mathcal{H}} S= \gamma$. Being the two measures mutually singular, it holds $|D|=|D_{\rm ac}|+|D_{\rm sing}|$. We will prove the contrapositive. Assume there exist $p\in [3,\infty]$ and $\sigma_p\in (0,1)$ such that 
$$
\frac{2\sigma_p}{1-\sigma_p}>1-\frac
{p-3}{p} (d+1-\gamma) \qquad \text{ and } \qquad u\in L^p_t B^{\sigma_p}_{p,\infty}.
$$
Then, we can find $\gamma<\tilde \gamma<d+1$ such that the strict inequality stays true with $\gamma$ replaced by $\tilde \gamma$. Since $\dim_{\mathcal{H}} S= \gamma$ and $D_{\rm ac}\ll\mathcal{H}^{d+1}$, it must be $\mathcal{H}^{\tilde \gamma} (S)=|D_{\rm ac}|(S)=0$. Thus, $(i)$ from Theorem \ref{T:main D regularity} implies $\abs{D_{\rm sing}}(S)=|D|(S)=0$. On the other hand, the assumption $D_{\rm sing}\equiv D_{\rm sing}\llcorner S$ implies $|D_{\rm sing}|(S^c)=0$. Thus $\abs{D_{\rm sing}}\equiv 0$. 

\end{proof}


\section{Corollaries} \label{s: corollaries}
\subsection{The Euler equations}
Several corollaries directly follow from the splitting \eqref{D_decomp}.

\begin{corollary}[Euler energy conservation \cites{CET94}]\label{C:cet}
If $u\in L^3_t B^\sigma_{3,\infty}$ is a weak solution to \eqref{E} for some $\sigma>\frac{1}{3}$, then $D\equiv 0$.
\end{corollary}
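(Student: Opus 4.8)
The plan is to use the modified energy identity of \cref{P:en_ident} specialized to $p=3$, together with the mollification bounds \eqref{est_Eell}, \eqref{est_Qell}, \eqref{est_Cell}, and to send the mollification scale $\ell\to 0$. The key structural observation is that for any fixed test function $\varphi\in C^\infty_{x,t}$ with compact support, the quantity $\langle D,\varphi\rangle$ on the left-hand side of \eqref{D_decomp} does not depend on $\ell$, whereas the right-hand side does. Hence it suffices to show that the right-hand side, paired with $\varphi$, tends to zero along some sequence $\ell\to 0$, which forces $\langle D,\varphi\rangle=0$ for every such $\varphi$, i.e. $D\equiv 0$.

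First I would test \eqref{D_decomp} against $\varphi$ and integrate by parts to move all derivatives onto $\varphi$, using $\div u_\ell=0$ to rewrite the transport term as $-\int E^\ell\, u_\ell\cdot\nabla\varphi$. This yields
\[
-\langle D,\varphi\rangle = -\int E^\ell\,\partial_t\varphi \;-\; \int E^\ell\, u_\ell\cdot\nabla\varphi \;-\; \int Q^\ell\cdot\nabla\varphi \;+\; \int C^\ell\,\varphi .
\]
Then I would estimate each of the four terms by H\"older's inequality with $p=3$ (so that $\tfrac{p}{2}=\tfrac32$ and $\tfrac{p}{3}=1$). The term with $E^\ell\,\partial_t\varphi$ is bounded by $\|E^\ell\|_{L^{3/2}_{x,t}}\|\partial_t\varphi\|_{L^{3}_{x,t}}\lesssim \ell^{2\sigma}$ via \eqref{est_Eell}; the convective term by $\|E^\ell\|_{L^{3/2}_{x,t}}\|u_\ell\|_{L^3_{x,t}}\|\nabla\varphi\|_{L^\infty_{x,t}}\lesssim \ell^{2\sigma}$, using that $\|u_\ell\|_{L^3_{x,t}}\lesssim\|u\|_{L^3_tB^\sigma_{3,\infty}}$; and the $Q^\ell$ term by $\|Q^\ell\|_{L^1_{x,t}}\|\nabla\varphi\|_{L^\infty_{x,t}}\lesssim \ell^{3\sigma}$ via \eqref{est_Qell}. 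All three exponents are strictly positive, so these contributions vanish as $\ell\to 0$ regardless of whether $\sigma$ exceeds $\tfrac13$.

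The decisive term is $\int C^\ell\,\varphi$, controlled by $\|C^\ell\|_{L^1_{x,t}}\|\varphi\|_{L^\infty_{x,t}}\lesssim \ell^{3\sigma-1}$ through \eqref{est_Cell}. This is the only piece whose exponent can be nonpositive, and it tends to zero precisely when $3\sigma-1>0$, i.e. $\sigma>\tfrac13$. The main obstacle is therefore this commutator term: $C^\ell$ contains the full gradient $\nabla u_\ell$, which a priori diverges like $\ell^{-1}$, and the gain is entirely due to pairing it against the two small factors $(u-u_\ell)$ and $\div R^\ell$, as already encoded in the Corollary. Since this obstacle is resolved by the estimate \eqref{est_Cell}, combining the four bounds gives $|\langle D,\varphi\rangle|\lesssim \ell^{2\sigma}+\ell^{3\sigma}+\ell^{3\sigma-1}\to 0$ for $\sigma>\tfrac13$, whence $D\equiv 0$. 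I expect the entire argument to reduce to this one threshold computation, recovering the classical Constantin--E--Titi criterion.
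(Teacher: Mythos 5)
Your proof is correct and follows essentially the same route as the paper: test the decomposition \eqref{D_decomp} against a compactly supported $\varphi$, observe that the $E^\ell$ and $Q^\ell$ contributions vanish as $\ell\to 0$, and use \eqref{est_Cell} to see that the only dangerous term $C^\ell$ is $O(\ell^{3\sigma-1})\to 0$ when $\sigma>\tfrac13$. The only cosmetic difference is that the paper handles $E^\ell$ and $Q^\ell$ qualitatively (via $u\in L^3_{x,t}$, $q\in L^{3/2}_{x,t}$) rather than with the quantitative rates $\ell^{2\sigma}$, $\ell^{3\sigma}$, which changes nothing.
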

\begin{proof}
    Since $u\in L^3_{x,t}$ and $q\in L^{\frac{3}{2}}_{x,t}$ we have
$$
\norm{E^\ell}_{L^1_{x,t}} + \norm{Q^\ell}_{L^1_{x,t}}\rightarrow 0 \qquad \text{as }\ell\rightarrow 0.
$$
Moreover, \eqref{est_Cell} implies $\norm{ C^\ell}_{L^1_{x,t}} \lesssim \ell^{3\sigma-1}$, which vanishes as $\ell\rightarrow 0$ since $\sigma>\frac{1}{3}$. This shows $D\equiv 0$.
\end{proof}

\begin{corollary}[Kinetic energy regularity \cite{Isett23}]\label{C:en reg}
Set
$$
e(t):=\frac{1}{2}\int_{\T^d} \abs{u(x,t)}^2 \,dx.
$$
If $u\in L^\infty_t B^\sigma_{3,\infty}\cap L^\infty_tB^\beta_{2,\infty}$ is a weak solution to \eqref{E} for some $\beta,\sigma\in(0,1)$, then\footnote{Since the spatial domain is bounded we have $\beta\geq \sigma$. Thus $1+2\beta-3\sigma>0$.} 
\begin{equation}\label{en_holder}
\abs{e(t)-e(s)}\lesssim \abs{t-s}^\frac{2\beta}{1+2\beta-3\sigma} \qquad \text{for a.e. } t,s.
\end{equation}
\end{corollary}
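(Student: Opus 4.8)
The plan is to integrate the local energy balance in space by testing the modified energy identity \eqref{D_decomp} against a purely time-dependent function. Fix $\eta\in C^\infty_c((0,T))$ and use $\eta(t)$ as a test function, which is admissible since $\T^d$ is compact. Because $\eta$ has no spatial gradient, the term $\div Q^\ell$ integrates to zero over $\T^d$, and since $\div u_\ell=0$ the transport term satisfies $\int_{\T^d}u_\ell\cdot\nabla E^\ell\,dx=\int_{\T^d}\div(E^\ell u_\ell)\,dx=0$. Writing $\mathcal{E}^\ell(t):=\int_{\T^d}E^\ell(x,t)\,dx$ and $\mathcal{C}^\ell(t):=\int_{\T^d}C^\ell(x,t)\,dx$, and using the original balance \eqref{enbal_E} to identify $\langle -D,\eta\rangle=-\int e\,\eta'$, the identity collapses to $\int e\,\eta'=\int\mathcal{E}^\ell\,\eta'-\int\mathcal{C}^\ell\,\eta$ for all such $\eta$. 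Equivalently, for every fixed $\ell>0$ one has $(e-\mathcal{E}^\ell)'=\mathcal{C}^\ell$ in $\mathcal D'((0,T))$.

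Next I would establish pointwise-in-time bounds on $\mathcal{E}^\ell$ and $\mathcal{C}^\ell$. Since $u\in L^\infty_t B^\sigma_{3,\infty}$, for a.e.\ $t$ the slice $u(\cdot,t)$ lies in $B^\sigma_{3,\infty}$ with a uniform bound, so the mollification estimates \eqref{moll_est_1}--\eqref{moll_est_3} apply in space at a.e.\ fixed time. Using $\norm{\cdot}_{L^2(\T^d)}\lesssim\norm{\cdot}_{L^3(\T^d)}$ on the finite-measure torus gives $\abs{\mathcal{E}^\ell(t)}\le\tfrac12\norm{u(\cdot,t)-u_\ell(\cdot,t)}_{L^2}^2\lesssim\ell^{2\sigma}$. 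Splitting $C^\ell$ and applying Hölder with the spatial rates $\norm{u-u_\ell}_{L^3_x}\lesssim\ell^\sigma$, $\norm{\div R^\ell}_{L^{3/2}_x}\lesssim\ell^{2\sigma-1}$ and $\norm{\nabla u_\ell}_{L^3_x}\lesssim\ell^{\sigma-1}$ yields $\abs{\mathcal{C}^\ell(t)}\le\norm{C^\ell(\cdot,t)}_{L^1_x}\lesssim\ell^{3\sigma-1}$, both uniformly in $t$.

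Combining these, from $(e-\mathcal{E}^\ell)'=\mathcal{C}^\ell\in L^\infty_t$ I conclude that $e-\mathcal{E}^\ell$ agrees a.e.\ with a Lipschitz function, and the fundamental theorem of calculus gives, for a.e.\ $s,t$,
\[
e(t)-e(s)=\mathcal{E}^\ell(t)-\mathcal{E}^\ell(s)+\int_s^t\mathcal{C}^\ell(\tau)\,d\tau .
\]
Hence $\abs{e(t)-e(s)}\lesssim\ell^{2\sigma}+\ell^{3\sigma-1}\abs{t-s}$ for every $\ell>0$, and optimizing through the choice $\ell=\abs{t-s}^{\frac{1}{1-\sigma}}$, which balances the two terms, produces the claimed rate $\abs{e(t)-e(s)}\lesssim\abs{t-s}^{\frac{2\sigma}{1-\sigma}}$.

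The main obstacle I anticipate is the passage from the space-time estimates \eqref{est_Eell}--\eqref{est_Cell} to the pointwise-in-time bounds used above: the corollary only records time-integrated norms, whereas the optimization needs $\mathcal{E}^\ell(t)$ and $\mathcal{C}^\ell(t)$ controlled at a.e.\ single time $t$. This is exactly where the $L^\infty_t$ hypothesis (rather than $L^3_t$) is essential, as it licenses applying \eqref{moll_est_1}--\eqref{moll_est_3} slice-by-slice. A minor point to dispatch separately is the regime $\sigma\ge\tfrac13$, where $\tfrac{2\sigma}{1-\sigma}\ge1$: there $D\equiv0$ by \cref{C:cet}, so $e$ is constant and the bound is trivial, and the argument above is only genuinely needed for $\sigma\in(0,\tfrac13)$ (for $\abs{t-s}\gtrsim1$ the estimate is also immediate from $e\in L^\infty_t$).
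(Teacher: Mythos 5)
Your proof is correct and follows essentially the same route as the paper: test \eqref{D_decomp} with a time-dependent $\eta$, note that $\div Q^\ell$ and the transport term drop out, bound the remaining terms by $\ell^{2\sigma}$ and $\ell^{3\sigma-1}$ using the $L^\infty_t$ Besov hypothesis slice-by-slice, and optimize with $\ell^{1-\sigma}=\abs{t-s}$. The only cosmetic difference is that you integrate the resulting distributional identity $(e-\mathcal{E}^\ell)'=\mathcal{C}^\ell$ via the fundamental theorem of calculus, whereas the paper lets $\eta\rightarrow \mathbbm{1}_{[s,t]}$; these are the same step.
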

For $
\sigma<\frac13$, the H\"older regularity in \eqref{en_holder} is strictly larger when $\beta>\sigma$ with respect to the choice $\beta=\sigma$. Because of intermittency, the strict inequality $\beta>\sigma$ holds experimentally \cite{ISY20}. 
\begin{proof}
    By testing \eqref{D_decomp} with a compactly supported function $\eta\in C^\infty_t$ we deduce
    $$
    \int e \eta' \, dt =\left\langle   D ,\eta\right\rangle   = \int \int_{\T^d} \left(E^\ell \eta' - C^\ell \eta\right) \, dx\, dt.
    $$
    Thus, by  \eqref{est_Eell} and \eqref{est_Cell} we get 
    $$
    \abs{ \int e \eta'} \lesssim  \norm{\eta'}_{L^1_t} \ell^{2\beta} + \norm{\eta}_{L^1_t} \ell^{3\sigma-1},
    $$
    the implicit constant being independent of $\eta$.
    For a.e. $t,s$ (say $t>s$), by keeping $\|\eta'\|_{L^1_t}\lesssim 1$, we let $\eta\rightarrow \mathbbm{1}_{[s,t]}$ and obtain $\abs{e(t)-e(s)}\lesssim \ell^{2\beta} + \abs{t-s} \ell^{3\sigma-1}$. The choice $\ell^{1+ 2\beta-3\sigma}=\abs{t-s}$ concludes the proof.
\end{proof}

\begin{remark}
    If in Corollary \ref{C:en reg} we further assume\footnote{Here $L^2_{w,x}$ denotes the space of $L^2_x$ functions endowed with the weak topology.} $u\in C^0_t L^2_{w,x}$, then the thesis \eqref{en_holder} can be upgraded to hold for every $t,s$.
\end{remark}
\begin{corollary}[Minkowski Intermittency \cite{DRIS24}]\label{C:mink_int}
Let $p\in [3,\infty]$, $\sigma\in (0,1)$ and let $u\in L^p_tB^\sigma_{p,\infty}$ be a weak solution to \eqref{E} with Duchon--Robert distribution $D$ such that $\overline \dim_{\mathcal{M}}\left( \spt D\right)\leq \gamma$. Then 
$$
\frac{2\sigma}{1-\sigma}>1-\frac{p-3}{p}(d+1-\gamma) \qquad \Longrightarrow \qquad D\equiv 0.
$$
\end{corollary}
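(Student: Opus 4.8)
The plan is to show that $\left\langle D,\varphi\right\rangle = 0$ for every $\varphi \in C^\infty_{x,t}$ with compact support, by exploiting that $D$ is concentrated on $\spt D$ together with the mollification rates \eqref{moll_est_dissipation} and the Minkowski bound on the volume of the $r$-neighborhood of $\spt D$. First I dispose of the trivial regime: if $\sigma>\tfrac13$ then $D\equiv 0$ by \cref{C:cet}, so I may assume $\sigma\leq \tfrac13$. Under this assumption the hypothesis forces $p>3$, since $p=3$ would give $\tfrac{2\sigma}{1-\sigma}>1$, i.e. $\sigma>\tfrac13$.

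Fix $\varphi$ and set $K=\spt\varphi$. For small $r>0$ I pick a cutoff $\chi^r\in C^\infty_{x,t}$ with $0\leq \chi^r\leq 1$, equal to $1$ on the open neighborhood $[\spt D\cap K]_r$, supported in $[\spt D\cap K]_{2r}$, and with $\abs{\nabla_{x,t}\chi^r}\lesssim r^{-1}$; for $r$ small these neighborhoods stay inside $\T^d\times(0,T)$ since $\spt D\cap K$ is compact. Because $\chi^r\equiv 1$ on a neighborhood of $\spt D\cap K$ and $\varphi$ vanishes off $K$, the product $\varphi(1-\chi^r)$ vanishes on a neighborhood of every point of $\spt D$; hence $\left\langle D,\varphi(1-\chi^r)\right\rangle=0$ by the distributional notion of support, and $\left\langle D,\varphi\right\rangle=\left\langle D,\varphi\chi^r\right\rangle$. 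Since $\varphi\chi^r$ is supported in $[\spt D\cap K]_{2r}$, the choice of $\chi^r$ yields
$$
\norm{\varphi\chi^r}_{L^{\frac{p}{p-3}}_{x,t}}\lesssim \abs{[\spt D]_{2r}}^{\frac{p-3}{p}}, \qquad \norm{\varphi\chi^r}_{W^{1,\frac{p}{p-3}}_{x,t}}\lesssim r^{-1}\abs{[\spt D]_{2r}}^{\frac{p-3}{p}},
$$
with constants depending on $\varphi$, where $\abs{\cdot}$ is the $(d+1)$-dimensional Lebesgue measure.

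Now I run the argument of part $(ii)$ of \cref{T:main D regularity}. Applying \eqref{moll_est_dissipation} to $\varphi\chi^r$ (the threshold $\delta_0$ depending only on $K$, hence uniform in $r$), for small $\delta>0$ I bound
$$
\abs{\left\langle D,\varphi\chi^r\right\rangle}\leq \abs{\left\langle D-D*\rho_\delta,\varphi\chi^r\right\rangle}+\abs{\left\langle D*\rho_\delta,\varphi\chi^r\right\rangle}\lesssim \left(\delta^{\frac{2\sigma}{1-\sigma}}r^{-1}+\delta^{\frac{2\sigma}{1-\sigma}-1}\right)\abs{[\spt D]_{2r}}^{\frac{p-3}{p}},
$$
and the choice $\delta=r$ collapses this to $r^{\frac{2\sigma}{1-\sigma}-1}\abs{[\spt D]_{2r}}^{\frac{p-3}{p}}$. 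Since $\overline{\dim}_\mathcal{M}(\spt D)\leq\gamma$, for every $\tilde\gamma>\gamma$ one has $\overline{\mathcal{M}}^{\tilde\gamma}(\spt D)=0$, so $\abs{[\spt D]_{2r}}\lesssim r^{(d+1)-\tilde\gamma}$ for $r$ small. Substituting gives
$$
\abs{\left\langle D,\varphi\right\rangle}=\abs{\left\langle D,\varphi\chi^r\right\rangle}\lesssim r^{\frac{2\sigma}{1-\sigma}-1+\frac{p-3}{p}((d+1)-\tilde\gamma)}.
$$
By the strict hypothesis the exponent is positive at $\tilde\gamma=\gamma$, hence remains positive for $\tilde\gamma\in(\gamma,d+1)$ close enough to $\gamma$. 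Letting $r\to 0$ forces $\left\langle D,\varphi\right\rangle=0$, and as $\varphi$ is arbitrary, $D\equiv 0$.

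The calculation is routine; the one point requiring care is the legitimacy of replacing $\varphi$ by $\varphi\chi^r$, namely verifying $\left\langle D,\varphi(1-\chi^r)\right\rangle=0$ through the support of $D$ and checking that the $r$-neighborhoods remain in $\T^d\times(0,T)$ so that \eqref{moll_est_dissipation} applies with a uniform threshold. I emphasize that, in contrast to \cref{C:main intermittency}, this argument never uses that $D$ is a measure: only the distributional notion of support enters. This is precisely what the Minkowski dimension of $\spt D$ (rather than the Hausdorff dimension of a concentration set) provides, at the expense of working at a single scale $r$ instead of a covering.
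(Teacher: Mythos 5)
Your proof is correct and follows essentially the same route as the paper's: a cutoff $\chi$ adapted to a $2r$-neighborhood of $\spt D$, the replacement $\left\langle D,\varphi\right\rangle=\left\langle D,\varphi\chi\right\rangle$, the volume bound from the Minkowski dimension, and the same optimization yielding the exponent $\frac{2\sigma}{1-\sigma}-1+\frac{p-3}{p}(d+1-\tilde\gamma)$. The only cosmetic difference is that you invoke the packaged mollification rates of \cref{P:quant moll est} (with $\delta=r$) where the paper pairs the decomposition \eqref{D_decomp} directly with $\varphi\chi^\eps$ and optimizes $\ell^{1-\sigma}=\eps$; these are the same estimate.
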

Note that, although the numerology coincides, Corollary \ref{C:mink_int} does not follow from $(i)$ in Theorem \ref{T:main D regularity} since we are not assuming $D$ to be a measure. 
\begin{proof}
    Let $\varphi\in C^\infty_{x,t}$ be any compactly supported test function. Denote by $[\spt D]_\eps$ the space-time $\eps$-neighbourhood of $\spt D$. Let $\chi^\eps\in C^\infty_{x,t}$ be such that 
    $$
    0\leq \chi^\eps\leq 1,\qquad \chi^\eps \big|_{[\spt D]_\eps}\equiv 1,\qquad \chi^\eps \big|_{[\spt D]_{2\eps}^c}\equiv 0 \qquad \text{and} \qquad \abs{\nabla_{x,t}\chi^\eps} \leq 4\eps^{-1}.
    $$
    Then $\left\langle   D,\varphi\right\rangle  =\left\langle   D,\varphi \chi^\eps\right\rangle  $, and  by the  decomposition \eqref{D_decomp} we infer
    \begin{align}
        \abs{\left\langle   D,\varphi\right\rangle}&\lesssim \left( \norm{E^\ell}_{L^{\frac{p}{2}}_{x,t}} \left(1+ \norm{u_\ell}_{L^p_{x,t}} \right) + \norm{Q^\ell}_{L^{\frac{p}{3}}_{x,t}} + \norm{C^\ell}_{L^{\frac{p}{3}}_{x,t}} \right) \norm{\chi^\eps}_{L^{\frac{p}{p-3}}_{x,t}}\\
        &+\left( \norm{E^\ell}_{L^{\frac{p}{2}}_{x,t}} \left(1+ \norm{u_\ell}_{L^p_{x,t}} \right)+ \norm{Q^\ell}_{L^{\frac{p}{3}}_{x,t}}  \right) \norm{\nabla_{x,t}\chi^\eps}_{L^{\frac{p}{p-3}}_{x,t}},
    \end{align}
    where the implicit constant depends only on $\varphi$. Let $\alpha>0$ be a small parameter that will be fixed later. The assumption $\overline \dim_{\mathcal{M}}\left( \spt D\right)\leq \gamma$ implies 
    $$
   \norm{\chi^\eps}_{L^{\frac{p}{p-3}}_{x,t}}+\eps\norm{\nabla_{x,t}\chi^\eps}_{L^{\frac{p}{p-3}}_{x,t}}\lesssim \eps^{\frac{p-3}{p}(d+1-\gamma- \alpha)},
    $$
    if $\eps$ is sufficiently small. Thus, by \eqref{est_Eell}, \eqref{est_Qell} and \eqref{est_Cell} we deduce 
    \begin{align}
         \abs{\left\langle   D,\varphi\right\rangle} &\lesssim \left( \ell^{2\sigma} + \ell^{3\sigma} + \ell^{3\sigma-1}\right) \eps^{\frac{p-3}{p}(d+1-\gamma -\alpha)} + \left( \ell^{2\sigma} + \ell^{3\sigma} \right) \eps^{\frac{p-3}{p}(d+1-\gamma-\alpha)-1}\\
         &\lesssim \ell^{3\sigma-1}\eps^{\frac{p-3}{p}(d+1-\gamma-\alpha)} + \ell^{2\sigma}\eps^{\frac{p-3}{p}(d+1-\gamma-\alpha)-1},
    \end{align}
    where the implicit constant depends only on norms of $\varphi$ and $u$. The choice $\ell^{1-\sigma}=\eps$ leads to
     \begin{align}
         \abs{\left\langle   D,\varphi\right\rangle  } & \lesssim \eps^{\frac{2\sigma}{1-\sigma}-1+\frac{p-3}{p}(d+1-\gamma-\alpha)}.
    \end{align}
    Since $\sigma,p,\gamma$ satisfy an open condition, we find $\alpha>0$ sufficiently small so that the exponent of $\eps$ in the above inequality is positive. The proof is concluded by letting $\eps\rightarrow 0$.
\end{proof}

\subsection{The Navier--Stokes equations}
We start with the following.
\begin{corollary}[Onsager quasi-singularity \cite{DE19}] 
Let $\{u^\nu\}_{\nu>0}\subset L^2_t H^1_x$ be a sequence of weak solutions to  \eqref{NS}. Denote by $\mathcal E^\nu:=D^\nu+\nu \abs{\nabla u^\nu}^2$ and pick any $\varphi\in C^\infty_{x,t}$ with compact support. Then
$$
\sup_{\nu>0} \norm{u^\nu}_{L^3_t B^\sigma_{3,\infty}}<\infty, \, \sigma \in \left[ \frac13,1 \right) \qquad \Longrightarrow \qquad \abs{ \left\langle   \mathcal E^\nu ,\varphi\right\rangle  } \lesssim \nu^\frac{3\sigma-1}{1+\sigma}.
$$
In particular, if 
$$
\liminf_{\nu\rightarrow 0} \frac{\abs{ \left\langle   \mathcal E^\nu ,\varphi\right\rangle  }}{\nu^\alpha} >0 \qquad \text{for some } \alpha\in [0,1),
$$
then 
$$
\liminf_{\nu\rightarrow 0} \norm{u^\nu}_{L^3_tB^{\sigma}_{3,\infty}}=\infty\qquad \forall \sigma>\frac{1+\alpha}{3-\alpha}.
$$
\end{corollary}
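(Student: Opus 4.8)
The plan is to specialize the Navier--Stokes identity \eqref{D_decomp_NS} to the exponent $p=3$, test it against the fixed $\varphi$, and optimize in the mollification scale $\ell$ exactly as in the proof of \cref{C:cet}, paying special attention to the two genuinely viscous terms. Testing $-\mathcal E^\nu$ against $\varphi$ and integrating by parts, the contribution of $(\partial_t + u^\nu_\ell\cdot\nabla - \nu\Delta)E^{\ell,\nu} + \div Q^{\ell,\nu} + C^{\ell,\nu}$ is controlled --- via \eqref{est_Eell}, \eqref{est_Qell} and \eqref{est_Cell} with $p=3$ (these estimates depend only on mollification and pressure bounds, hence hold verbatim for the $\nu$-quantities) --- by $\ell^{2\sigma}+\ell^{3\sigma}+\ell^{3\sigma-1}\lesssim \ell^{3\sigma-1}$ for $\ell$ small, the implicit constant involving only $L^\infty_{x,t}$-type norms of $\varphi$ and its derivatives (for $p=3$ the relevant dual exponent $\tfrac{p}{p-3}$ is $\infty$) together with the local $L^3_tB^\sigma_{3,\infty}$ norm of $u^\nu$. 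The extra piece $-\nu\Delta E^{\ell,\nu}$ only moves a Laplacian onto $\varphi$ and produces a harmless $\nu\ell^{2\sigma}$.

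The heart of the matter, and the main obstacle, is the pair of viscous terms $\nu\abs{\nabla u^\nu_\ell}^2 - 2\nu\nabla u^\nu:\nabla u^\nu_\ell$ in \eqref{D_decomp_NS}: the cross term contains the \emph{honest} gradient $\nabla u^\nu$, which is not controlled by the Besov norm of $u^\nu$ (only by $\nu\norm{\nabla u^\nu}^2_{L^2}$, which is essentially the quantity we are trying to estimate and cannot be fed back in). To circumvent this I would rewrite
$$ \nu\abs{\nabla u^\nu_\ell}^2 - 2\nu\nabla u^\nu:\nabla u^\nu_\ell = -\nu\abs{\nabla u^\nu_\ell}^2 - 2\nu\nabla(u^\nu - u^\nu_\ell):\nabla u^\nu_\ell $$
and integrate by parts in the cross term (justified by $u^\nu\in L^2_tH^1_x$), transferring the derivative off $u^\nu - u^\nu_\ell$ so that it gets paired against $\Delta u^\nu_\ell\,\varphi$ and $\nabla u^\nu_\ell\cdot\nabla\varphi$. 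Using \eqref{moll_est_1} and \eqref{moll_est_2} in the form $\norm{u^\nu - u^\nu_\ell}_{L^3_x}\lesssim \ell^\sigma$, $\norm{\nabla u^\nu_\ell}_{L^3_x}\lesssim \ell^{\sigma-1}$, $\norm{\Delta u^\nu_\ell}_{L^3_x}\lesssim \ell^{\sigma-2}$ together with H\"older in space-time, every viscous contribution is then bounded by $\nu\ell^{2\sigma-2}$, with no reference whatsoever to $\nabla u^\nu$.

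Combining the two groups gives $\abs{\langle\mathcal E^\nu,\varphi\rangle}\lesssim \ell^{3\sigma-1}+\nu\ell^{2\sigma-2}$ for all small $\ell$. Balancing the two terms forces $\ell^{1+\sigma}=\nu$, i.e. $\ell=\nu^{1/(1+\sigma)}$, which yields the claimed rate $\nu^{(3\sigma-1)/(1+\sigma)}$; the hypothesis $\sigma\geq\tfrac13$ guarantees $3\sigma-1\geq 0$, so the exponent is nonnegative and the balancing scale $\ell\to 0$ as $\nu\to 0$.

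For the ``in particular'' statement I would argue by contradiction. If $\liminf_{\nu\to 0}\norm{u^\nu}_{L^3_tB^{\sigma_\alpha+\delta}_{3,\infty}}<\infty$ for some $\delta>0$, pick a subsequence $\nu_k\to 0$ along which this norm stays bounded and apply the rate just proved with $\sigma=\sigma_\alpha+\delta\in(\tfrac13,1)$. A direct computation gives $\frac{3\sigma_\alpha-1}{1+\sigma_\alpha}=\alpha$ for $\sigma_\alpha=\frac{1+\alpha}{3-\alpha}$, and since $\sigma\mapsto\frac{3\sigma-1}{1+\sigma}$ has derivative $\frac{4}{(1+\sigma)^2}>0$ it is strictly increasing; hence the exponent at $\sigma_\alpha+\delta$ strictly exceeds $\alpha$, say it equals $\alpha+\beta$ with $\beta>0$. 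Therefore $\nu_k^{-\alpha}\abs{\langle\mathcal E^{\nu_k},\varphi\rangle}\lesssim \nu_k^{\,\beta}\to 0$, contradicting $\liminf_{\nu\to 0}\nu^{-\alpha}\abs{\langle\mathcal E^\nu,\varphi\rangle}>0$. This establishes the contrapositive and completes the argument.
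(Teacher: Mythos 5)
Your argument is correct and follows essentially the same route as the paper: the same decomposition \eqref{D_decomp_NS}, the identical rewriting and integration by parts of the viscous cross term (which the paper records as \eqref{reass_last_term}), the same bound $\ell^{3\sigma-1}+\nu\ell^{2(\sigma-1)}$, and the same optimization $\ell^{1+\sigma}=\nu$. The contrapositive derivation of the ``in particular'' statement, including the computation $\tfrac{3\sigma_\alpha-1}{1+\sigma_\alpha}=\alpha$ and the monotonicity of the rate in $\sigma$, is also correct.
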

\begin{proof}
    We write the last two terms in the decomposition \eqref{D_decomp_NS} as
    \begin{align}
        \int \left(\abs{\nabla u^\nu_\ell}^2 - 2 \nabla u^\nu:\nabla u^\nu_\ell \right)\varphi &=\int \left(\abs{\nabla u^\nu_\ell}^2 - 2 \nabla (u^\nu-u^\nu_\ell):\nabla u^\nu_\ell -2 \abs{\nabla u^\nu_\ell}^2\right)\varphi\\
        &=-\int \abs{\nabla u^\nu_\ell}^2 \varphi + 2 \int (u^\nu - u^\nu_\ell)\cdot \Delta u^\nu_\ell \varphi \\
        &\quad + 2 \int (u^\nu-u^\nu_\ell)\otimes \nabla \varphi :\nabla u^\nu_\ell.\label{reass_last_term}
    \end{align}
    Since $\{u^\nu\}_{\nu>0}$ is bounded in $L^3_t B^\sigma_{3,\infty}$, by \eqref{est_Eell}, \eqref{est_Qell}, \eqref{est_Cell} and \eqref{moll_est_2} we estimate
    \begin{align}
        \abs{ \left\langle   \mathcal E^\nu ,\varphi\right\rangle }& \lesssim \norm{E^{\ell,\nu}}_{L^{\frac{3}{2}}_{x,t}} \left(1+ \norm{ u^\nu_\ell}_{L^3_{x,t}} \right) + \norm{Q^{\ell,\nu}}_{L^{1}_{x,t}} \\
        &\quad + \norm{C^{\ell,\nu}}_{L^{1}_{x,t}} + \nu \norm{E^{\ell,\nu}}_{L^{\frac{3}{2}}_{x,t}} + \nu \norm{\nabla u^\nu_\ell}^2_{L^2_{x,t}} \\
        & \quad +\nu  \norm{u^\nu-u^\nu_\ell}_{L^2_{x,t}} \left(\norm{\Delta u^\nu_\ell}_{L^2_{x,t}}+ \norm{\nabla u^\nu_\ell}_{L^2_{x,t}}\right)\\
        &\lesssim \ell^{2\sigma} + \ell^{3\sigma}+ \ell^{3\sigma-1}+\nu \ell^{2\sigma} + \nu \ell^{2(\sigma-1)} +\nu \ell^{2\sigma -1 }\\
        &\lesssim \ell^{3\sigma-1}+ \nu \ell^{2(\sigma-1)}.
    \end{align}
    The proof is concluded by choosing $\ell^{1+\sigma}=\nu$.
\end{proof}

The next result provides a precise physical meaning to the term $C^{\ell,\nu}$ in the decomposition \eqref{D_decomp_NS}

\begin{corollary}[Inertial flux equals dissipation]\label{C:ff law}
    Let $\{u^\nu\}_{\nu>0}\subset L^2_t H^1_x\cap L^3_{x,t}$ be a sequence of weak solutions to \eqref{NS}. Assume 
\begin{equation}\label{unif_besov_bound_2_ff_law}
\sup_{\nu>0} \norm{ u^\nu }_{L^2_t B^\sigma_{2,\infty}}<\infty \qquad \text{for some } \sigma \in (0,1).
\end{equation}
Let $\{\ell_\nu\}_{\nu>0}$ be any infinitesimal\footnote{Meaning that $\ell_\nu\rightarrow 0$ as $\nu\rightarrow 0$.} sequence of positive numbers such that 
\begin{equation}
    \label{k41_dissip_scale}
    \limsup_{\nu\rightarrow 0} \frac{\nu}{\ell_\nu^{2(1-\sigma)}}=0.
\end{equation}
Denote $\mathcal E^\nu:=D^\nu+\nu\abs{\nabla u^\nu}^2$. Then, for any  compactly supported $\eta\in C^\infty_t$, it holds 
\begin{equation}\label{ff_law_Cell}
\lim_{\ell_I\rightarrow 0} \limsup_{\nu\rightarrow 0} \sup_{\ell\in [\ell_\nu,\ell_I]} \abs{\left\langle   \mathcal E^\nu + C^{\ell, \nu},\eta\right\rangle   } =0.
\end{equation}
\end{corollary}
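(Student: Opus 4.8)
The plan is to pair the Navier--Stokes decomposition \eqref{D_decomp_NS} with the purely temporal test function $\eta=\eta(t)$ and exploit that every \emph{spatial} flux term drops out. Rewriting \eqref{D_decomp_NS} as
\begin{equation*}
\mathcal E^\nu + C^{\ell,\nu} = -(\partial_t + u^\nu_\ell\cdot\nabla - \nu\Delta)E^{\ell,\nu} - \div Q^{\ell,\nu} - \nu\abs{\nabla u^\nu_\ell}^2 + 2\nu\,\nabla u^\nu:\nabla u^\nu_\ell
\end{equation*}
and testing against $\eta$, the conditions $\nabla_x\eta\equiv 0$ and $\Delta_x\eta\equiv 0$ force the spatial divergence $\div Q^{\ell,\nu}$, the transport term $u^\nu_\ell\cdot\nabla E^{\ell,\nu}$ (using $\div u^\nu_\ell=0$) and the viscous term $\nu\Delta E^{\ell,\nu}$ to vanish, leaving only the time derivative of $E^{\ell,\nu}$. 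This yields the key identity
\begin{equation*}
\left\langle \mathcal E^\nu + C^{\ell,\nu},\eta\right\rangle = \int E^{\ell,\nu}\,\partial_t\eta - \nu\int\abs{\nabla u^\nu_\ell}^2\eta + 2\nu\int\nabla u^\nu:\nabla u^\nu_\ell\,\eta.
\end{equation*}

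The first term is harmless: by the uniform bound \eqref{unif_besov_bound_2_ff_law} and \eqref{moll_est_1} at $p=2$ one has $\norm{E^{\ell,\nu}}_{L^1_{x,t}}\lesssim \ell^{2\sigma}$, so $\abs{\int E^{\ell,\nu}\partial_t\eta}\lesssim \ell^{2\sigma}\norm{\partial_t\eta}_{L^\infty_t}$. Taking the supremum over $\ell\in[\ell_\nu,\ell_I]$ bounds this by $\ell_I^{2\sigma}$, uniformly in $\nu$.

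The main obstacle is the viscous cross term $2\nu\int\nabla u^\nu:\nabla u^\nu_\ell\,\eta$, which cannot be estimated directly because $\nu\norm{\nabla u^\nu}^2_{L^2_{x,t}}$ is only controlled by the energy and need not be small. The device is to decompose $\nabla u^\nu = \nabla u^\nu_\ell + \nabla(u^\nu - u^\nu_\ell)$; combined with $-\nu\abs{\nabla u^\nu_\ell}^2$, and integrating the cross term by parts in space, the two viscous contributions become
\begin{equation*}
\nu\int\abs{\nabla u^\nu_\ell}^2\eta + 2\nu\int\nabla(u^\nu - u^\nu_\ell):\nabla u^\nu_\ell\,\eta = \nu\int\abs{\nabla u^\nu_\ell}^2\eta - 2\nu\int(u^\nu - u^\nu_\ell)\cdot\Delta u^\nu_\ell\,\eta.
\end{equation*}
Both remainders are now at the dissipation scale: \eqref{moll_est_2} with $n=1$ gives $\nu\abs{\int\abs{\nabla u^\nu_\ell}^2\eta}\lesssim \nu\ell^{-2(1-\sigma)}$, while \eqref{moll_est_1} together with \eqref{moll_est_2} at $n=2$ gives $\nu\abs{\int(u^\nu-u^\nu_\ell)\cdot\Delta u^\nu_\ell\,\eta}\lesssim \nu\,\ell^{\sigma}\cdot\ell^{\sigma-2} = \nu\ell^{-2(1-\sigma)}$. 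Since the exponent is negative, the supremum over $\ell\in[\ell_\nu,\ell_I]$ of $\nu\ell^{-2(1-\sigma)}$ is attained at $\ell=\ell_\nu$ and equals $\nu/\ell_\nu^{2(1-\sigma)}$.

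Collecting the three bounds, and keeping track that the implicit constants depend only on $\eta$ and on $\sup_{\nu}\norm{u^\nu}_{L^2_t B^\sigma_{2,\infty}}$, gives
\begin{equation*}
\sup_{\ell\in[\ell_\nu,\ell_I]}\abs{\left\langle \mathcal E^\nu + C^{\ell,\nu},\eta\right\rangle}\lesssim \ell_I^{2\sigma} + \frac{\nu}{\ell_\nu^{2(1-\sigma)}}.
\end{equation*}
Taking $\limsup_{\nu\to 0}$ annihilates the second term by hypothesis \eqref{k41_dissip_scale}, and then letting $\ell_I\to 0$ kills the first, which is precisely \eqref{ff_law_Cell}. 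The heart of the matter is therefore the rebalancing of the viscous cross term via the splitting and the single spatial integration by parts, which trades the uncontrolled factor $\norm{\nabla u^\nu}_{L^2_{x,t}}$ for the product $\norm{u^\nu-u^\nu_\ell}_{L^2_{x,t}}\norm{\Delta u^\nu_\ell}_{L^2_{x,t}}$ living exactly at the scale prescribed by \eqref{k41_dissip_scale}.
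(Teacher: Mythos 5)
Your proposal is correct and follows essentially the same route as the paper: test \eqref{D_decomp_NS} against $\eta(t)$ so that all spatial flux terms vanish, rewrite the viscous terms via $\nabla u^\nu=\nabla u^\nu_\ell+\nabla(u^\nu-u^\nu_\ell)$ and one spatial integration by parts (the paper's \eqref{reass_last_term}), and bound the remainders by $\ell^{2\sigma}+\nu\ell^{2(\sigma-1)}$ before taking the supremum over $\ell\in[\ell_\nu,\ell_I]$. The only cosmetic difference is that the paper explicitly notes one may assume $\sigma<1$ without loss of generality so that the exponent $2(\sigma-1)$ is indeed negative, a point you use implicitly.
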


\begin{proof}
   Testing formula \eqref{D_decomp_NS} with a smooth time dependent function $\eta$ with compact support and writing the last term in \eqref{D_decomp_NS} as in \eqref{reass_last_term}, we estimate 
   \begin{align}
       \abs{\left\langle   \mathcal E^\nu+ C^{\ell, \nu},\eta\right\rangle }  \lesssim \norm{E^{\ell,\nu}}_{L^1_{x,t}} + \nu \left(\norm{\nabla u^\nu_\ell}^2_{L^2_{x,t}} + \norm{u^\nu-u^\nu_\ell}_{L^2_{x,t}}\norm{\Delta u^\nu_\ell}_{L^2_{x,t}}\right).
   \end{align}   
   Thus, \eqref{moll_est_1}, \eqref{moll_est_2} and \eqref{est_Eell} yield  
   $$
    \abs{\left\langle   \mathcal E^\nu + C^{\ell, \nu},\eta\right\rangle } \lesssim \ell^{2\sigma} + \nu \ell^{2(\sigma-1)}.
   $$
  Therefore
   $$
    \sup_{\ell\in [\ell_\nu,\ell_I]} \abs{ \left\langle   \mathcal E^\nu + C^{\ell, \nu},\eta\right\rangle } \lesssim \ell_I^{2\sigma} + \frac{\nu}{\ell_\nu^{2(1-\sigma)}},
   $$
   and our choice \eqref{k41_dissip_scale} of the dissipative length scale implies
   $$
  \limsup_{\nu\rightarrow 0} \sup_{\ell\in [\ell_\nu,\ell_I]}\abs{\left\langle  \mathcal E^\nu + C^{\ell, \nu},\eta\right\rangle } \lesssim \ell_I^{2\sigma}.
   $$
   The proof is concluded by letting $\ell_I\rightarrow 0$ since $\sigma>0$.
\end{proof}
\begin{remark}\label{R:ff_law_compactness}
    Corollary \ref{C:ff law} still holds by weakening the uniform Besov bound to just $L^2_{x,t}$ compactness of the sequence $\{u^\nu\}_{\nu>0}$. In this case, \eqref{k41_dissip_scale} has to be modified accordingly.
\end{remark}

\begin{remark}
    From \eqref{ff_law_Cell} we deduce that $C^{\ell,\nu}$ is the term that might cause anomalous dissipation in the inviscid limit. Indeed, if non-vanishing, it keeps the rate of dissipation of order $1$ all over the scales in the inertial range   $[\ell_\nu,\ell_I]$. Thus, in view of the Kolmogorov ``Four-Fifths Law", it must be equivalent to third order longitudinal increments, at least asymptotically over the relevant range of scales. To see this, set $\delta_{\ell z} u^\nu(x,t):=u^\nu (x+\ell z,t)-u^\nu(x,t)$ and define the local longitudinal third order structure function by
    $$
    S^\nu_{3,\parallel}(\ell)(x,t):=\fint_{\mathbb{S}^{d-1}} \Big( z\cdot \delta_{\ell z} u^\nu (x,t)\Big)^3\,d\mathcal H^{d-1}(z).
    $$
    Assuming \eqref{unif_besov_bound_2_ff_law} and \eqref{k41_dissip_scale}, the arguments from \cites{eyink2002local,Novack24} prove 
    $$
    \lim_{\ell_I\rightarrow 0} \limsup_{\nu\rightarrow 0} \sup_{\ell\in [\ell_\nu,\ell_I]} \abs{\left\langle   \frac{S^\nu_{3,\parallel}(\ell)}{\ell}+  \frac{12}{d(d+2)}\mathcal E^{ \nu},\eta\right\rangle} =0.
    $$
    Then, Corollary \ref{C:ff law} implies 
    $$
 \lim_{\ell_I\rightarrow 0} \limsup_{\nu\rightarrow 0} \sup_{\ell\in [\ell_\nu,\ell_I]} \abs{\left\langle   \frac{S^\nu_{3,\parallel}(\ell)}{\ell}- \frac{12}{d(d+2)}C^{\ell,\nu},\eta\right\rangle }=0,
$$
giving a precise structural meaning to $C^{\ell,\nu}$.  See \cite{drivas2022self} for possible implications for self-regularization of turbulence.
\end{remark}
\begin{remark}
    If in Corollary \ref{C:ff law} we additionally require  $\{u^\nu\}_{\nu>0}$  to be strongly compact in $L^3_{x,t}$ 
then the thesis holds locally in space-time, that is, one can choose any compactly supported $\varphi\in C^\infty_{x,t}$ in \eqref{ff_law_Cell} instead of just $\eta\in C^\infty_t$. 
\end{remark}

The next corollary shows that to rule out anomalous dissipation it is sufficient to retain an Onsager subcritical regularity in the dissipative range only.  For any $\ell>0$ we define 
$$
\overline S^\nu_3(\ell):=\int_0^T \fint_{B_\ell(0)} \|u^\nu(\cdot +z,t) - u^\nu(\cdot, t)\|^3_{L^3_x}\,dzdt.
$$

\begin{corollary}[A dissipative range condition]\label{C:dissipative range onsager}
     Let $\{u^\nu\}_{\nu>0}\subset L^2_t H^1_x\cap L^3_{x,t}$ be a sequence of weak solutions to \eqref{NS} such that $\sup_{\nu>0}\nu \int |\nabla u^\nu|^2<\infty$. Assume there exists an infinitesimal sequence of positive numbers $\{\ell_\nu\}_{\nu>0}$ and a value $\sigma\geq \frac13$ such that 
     \begin{equation}
         \label{condition on ell dissipative}
         \limsup_{\nu\rightarrow 0} \frac{\nu}{\ell_\nu^{2(1-\sigma)}}<\infty\qquad \text{and}\qquad \lim_{\nu\rightarrow 0}\frac{\overline S^\nu_3(\ell_\nu)}{\ell_\nu^{3\sigma}}=0.
     \end{equation}
    Denote $\mathcal{E}^\nu:=D^\nu+\nu|\nabla u^\nu|^2$.  For any $\eta\in C^\infty_t$  with compact support it holds
     $$
     \limsup_{\nu\rightarrow 0}\left| \langle\mathcal{E}^\nu,\eta \rangle\right|=0.
     $$
\end{corollary}
\begin{proof}
Since $\eta$ does not depend on the space variable, by \eqref{D_decomp_NS} we get 
    \begin{equation}
        \label{energy balance for time dependent test}
        \langle\mathcal{E}^\nu,\eta \rangle=\int E^{\ell,\nu}\partial_t\eta - \int C^{\ell,\nu}\eta - \nu \int |\nabla u^\nu_\ell|^2\eta +2\nu \int \nabla u^\nu :\nabla u^\nu_\ell\eta.
    \end{equation}
    Direct computations show
    \begin{align}
\|E^{\ell,\nu}\|_{L^\frac32_{x,t}}\lesssim\left(\overline S^\nu_3(\ell)\right)^\frac23,\qquad   \|C^{\ell,\nu}\|_{L^1_{x,t}}\lesssim   \frac{\overline S^\nu_3(\ell)}{\ell}\qquad \text{and}\qquad \|\nabla u^\nu_\ell\|_{L^3_{x,t}} \lesssim \frac{\left(\overline S^\nu_3(\ell)\right)^\frac13}{\ell}.\\
    \end{align}
    Thus, we can bound all the terms in \eqref{energy balance for time dependent test} to get 
    \begin{align}
        \left| \langle\mathcal{E}^\nu,\eta \rangle\right|&\lesssim \| E^{\ell,\nu}\|_{L^1_{x,t}}+ \| C^{\ell,\nu}\|_{L^1_{x,t}} +\nu \| \nabla u^\nu_\ell \|^2_{L^2_{x,t}} +\nu \| \nabla u^\nu \|_{L^2_{x,t}} \| \nabla u^\nu_\ell \|_{L^2_{x,t}}\\
        &\lesssim \| E^{\ell,\nu}\|_{L^\frac32_{x,t}}+ \| C^{\ell,\nu}\|_{L^1_{x,t}} +\nu \| \nabla u^\nu_\ell \|^2_{L^3_{x,t}} +\sqrt \nu  \| \nabla u^\nu_\ell \|_{L^3_{x,t}}\\
        &\lesssim \left(\overline S^\nu_3(\ell)\right)^\frac23 +\frac{\overline S^\nu_3(\ell)}{\ell} +  \nu \frac{\left(\overline S^\nu_3(\ell)\right)^\frac23}{\ell^2}+ \sqrt \nu \frac{\left(\overline S^\nu_3(\ell)\right)^\frac13}{\ell} \qquad \forall \ell>0.
    \end{align}
    Let $\{\ell_\nu\}_{\nu>0}$ be the infinitesimal sequence satisfying \eqref{condition on ell dissipative}. Since $\sigma\geq \frac13$, we get
    $$
   \lim_{\nu\rightarrow 0} \frac{\overline S^\nu_3(\ell_\nu)}{\ell_\nu}=0.
    $$
    Thus, by choosing $\ell=\ell_\nu$ and letting $\nu\rightarrow 0$ we conclude 
    $$
    \limsup_{\nu\rightarrow 0} \left| \langle\mathcal{E}^\nu,\eta\rangle\right|\lesssim  \limsup_{\nu\rightarrow 0}  \left(\frac{\nu}{\ell_\nu^{2(1-\sigma)}} \right)^\frac12\left(\frac{\overline S^\nu_3(\ell_\nu)}{\ell_\nu^{3\sigma}}\right)^\frac13=0.
    $$
\end{proof}

\begin{remark}
    Instead of $\eta\in C^\infty_t$, in Corollary \ref{C:dissipative range onsager} we could have chosen a test function $\varphi\in C^\infty_{x,t}$ depending on both space and time,  requiring in addition that $\{u^\nu\}_{\nu>0}\subset L^3_{x,t}$ is bounded. This is needed in order to estimate the advective term as 
    $$
    \left\|u^\nu_\ell E^{\ell,\nu}\right\|_{L^1_{x,t}}\leq \left\|u^\nu_\ell \right\|_{L^3_{x,t}} \left\|E^{\ell,\nu}\right\|_{L^\frac{3}{2}_{x,t}}\lesssim \left(\overline S^\nu_3(\ell)\right)^\frac23.
    $$
\end{remark}

The following corollary is also new, quantifying the relevant scales to capture the whole dissipation.
\begin{corollary}[Resolved dissipation scales]\label{C:resolved_diss}
    Let $\{u^\nu\}_{\nu>0}\subset L^2_t H^1_x$ be a sequence of weak solutions to \eqref{NS} such that $\sup_{\nu>0} \nu\int \abs{\nabla u^\nu}^2<\infty$. Assume 
$$
\sup_{\nu>0} \norm{u^\nu }_{L^4_t B^\sigma_{4,\infty}}<\infty \qquad \text{for some } \sigma\in  (0,1).
$$ 
Let $\{\ell_\nu\}_{\nu>0}$ be any  sequence of positive numbers such that 
\begin{equation}
    \label{k41_dissip_scale_new}
    \limsup_{\nu\rightarrow 0} \frac{\ell_\nu^{4\sigma}}{\nu}=0.
\end{equation}
Let $\eta\in C^\infty_t$ be  non-negative and compactly supported. Denote $\mathcal E^\nu:=D^\nu+\nu \abs{\nabla u^\nu}^2$. It holds 
$$
\limsup_{\nu\rightarrow 0} \nu \int \abs{\nabla u^\nu_{\ell_\nu} }^2 \eta =0 \qquad \Longrightarrow \qquad \limsup_{\nu\rightarrow 0}\, \abs{\left\langle  \mathcal E^\nu ,\eta\right\rangle} =0.
$$
\end{corollary}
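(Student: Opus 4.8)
The plan is to test the Navier--Stokes decomposition \eqref{D_decomp_NS} against the time-only function $\eta=\eta(t)$ and exploit that all spatial-derivative terms drop out. Since $\nabla_x\eta=0$ and the integral over $\T^d$ of any spatial divergence or Laplacian vanishes, the transport term $u^\nu_\ell\cdot\nabla E^{\ell,\nu}=\div(E^{\ell,\nu}u^\nu_\ell)$ (recall $\div u^\nu_\ell=0$), the viscous term $\nu\Delta E^{\ell,\nu}$, and the flux term $\div Q^{\ell,\nu}$ all pair to zero with $\eta$. Keeping the last two viscous terms of \eqref{D_decomp_NS} in their unreassembled form, I obtain for every $\ell>0$
\begin{equation*}
-\langle\mathcal E^\nu,\eta\rangle=-\int E^{\ell,\nu}\eta'+\int C^{\ell,\nu}\eta+\nu\int\abs{\nabla u^\nu_\ell}^2\eta-2\nu\int\nabla u^\nu:\nabla u^\nu_\ell\,\eta.
\end{equation*}
The argument then reduces to showing that each of the four terms on the right vanishes as $\nu\to0$, once $\ell$ is specialized to $\ell_\nu$.

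The $E$-term is the easiest: by \eqref{moll_est_1} with $p=4$ one has $\norm{u^\nu-u^\nu_{\ell_\nu}}_{L^4_{x,t}}\lesssim\ell_\nu^\sigma$, hence $\norm{E^{\ell_\nu,\nu}}_{L^1_{x,t}}\lesssim\ell_\nu^{2\sigma}$ and $\abs{\int E^{\ell_\nu,\nu}\eta'}\lesssim\ell_\nu^{2\sigma}\to0$. For the two viscous terms I would use the global enstrophy bound $\sup_\nu\nu\int\abs{\nabla u^\nu}^2<\infty$. The purely resolved piece $\nu\int\abs{\nabla u^\nu_{\ell_\nu}}^2\eta$ tends to $0$ precisely by hypothesis, while the cross term is handled by Cauchy--Schwarz in the \emph{non-negative} measure $\eta\,dx\,dt$ (here $\eta\geq0$ is used):
\begin{equation*}
\nu\abs{\int\nabla u^\nu:\nabla u^\nu_{\ell_\nu}\,\eta}\leq\Big(\nu\int\abs{\nabla u^\nu}^2\eta\Big)^{1/2}\Big(\nu\int\abs{\nabla u^\nu_{\ell_\nu}}^2\eta\Big)^{1/2},
\end{equation*}
whose first factor is bounded by $\norm{\eta}_{L^\infty}^{1/2}\big(\sup_\nu\nu\int\abs{\nabla u^\nu}^2\big)^{1/2}$ and whose second factor vanishes by the assumed resolution of the dissipation.

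The main obstacle is the term $\int C^{\ell_\nu,\nu}\eta$, since the naive bound $\norm{C^{\ell,\nu}}_{L^{4/3}_{x,t}}\lesssim\ell^{3\sigma-1}$ from \eqref{est_Cell} diverges when $\sigma<\tfrac13$. The key idea is to trade the bad $\ell^{-1}$ loss for the viscous factor $\nu^{-1/2}$ via enstrophy, and let the scale condition \eqref{k41_dissip_scale_new} close the estimate. For the first piece of $C^{\ell,\nu}$ I would integrate the divergence by parts (no spatial boundary term survives because $\eta$ is $x$-independent), rewriting $\int(u^\nu-u^\nu_\ell)\cdot\div R^{\ell,\nu}\,\eta=-\int\nabla(u^\nu-u^\nu_\ell):R^{\ell,\nu}\,\eta$, which by Cauchy--Schwarz is $\lesssim\norm{\nabla(u^\nu-u^\nu_\ell)}_{L^2_{x,t}}\norm{R^{\ell,\nu}}_{L^2_{x,t}}$; using $\norm{R^{\ell,\nu}}_{L^2_{x,t}}\lesssim\ell^{2\sigma}$ from \eqref{est_Rell} and $\norm{\nabla(u^\nu-u^\nu_\ell)}_{L^2_{x,t}}\leq2\norm{\nabla u^\nu}_{L^2_{x,t}}\lesssim\nu^{-1/2}$ yields $\lesssim\ell^{2\sigma}\nu^{-1/2}$. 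For the second piece I would apply Hölder with exponents $(4,4,2)$, bounding $\int\abs{u^\nu-u^\nu_\ell}^2\abs{\nabla u^\nu_\ell}\lesssim\norm{u^\nu-u^\nu_\ell}_{L^4_{x,t}}^2\norm{\nabla u^\nu_\ell}_{L^2_{x,t}}\lesssim\ell^{2\sigma}\nu^{-1/2}$, again via $\norm{\nabla u^\nu_\ell}_{L^2_{x,t}}\leq\norm{\nabla u^\nu}_{L^2_{x,t}}\lesssim\nu^{-1/2}$. Hence $\abs{\int C^{\ell_\nu,\nu}\eta}\lesssim\ell_\nu^{2\sigma}\nu^{-1/2}=(\ell_\nu^{4\sigma}/\nu)^{1/2}$.

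Collecting the four estimates gives $\abs{\langle\mathcal E^\nu,\eta\rangle}\lesssim\ell_\nu^{2\sigma}+(\ell_\nu^{4\sigma}/\nu)^{1/2}+o(1)$, with constants uniform in $\nu$ thanks to the uniform $L^4_tB^\sigma_{4,\infty}$ and enstrophy bounds. Since $\ell_\nu\to0$ and $\ell_\nu^{4\sigma}/\nu\to0$ by \eqref{k41_dissip_scale_new}, taking $\limsup_{\nu\to0}$ concludes. The genuinely delicate point is the treatment of $C^{\ell_\nu,\nu}$: one must avoid the frequency-localized $\ell^{-1}$ loss by spending one derivative against the enstrophy, which is exactly what singles out the scaling \eqref{k41_dissip_scale_new} (rather than the inertial-range scaling of \cref{C:ff law}) as the natural one here.
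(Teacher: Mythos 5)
Your proof is correct and follows essentially the same route as the paper: test \eqref{D_decomp_NS} against $\eta(t)$, kill the divergence/Laplacian terms, handle the viscous cross term by Cauchy--Schwarz against the enstrophy bound, and control $C^{\ell_\nu,\nu}$ at the rate $\ell_\nu^{2\sigma}\nu^{-1/2}=(\ell_\nu^{4\sigma}/\nu)^{1/2}$. The only cosmetic difference is that you integrate $(u^\nu-u^\nu_\ell)\cdot\div R^{\ell,\nu}$ by parts and use $\norm{R^{\ell,\nu}}_{L^2_{x,t}}\lesssim\ell^{2\sigma}$, whereas the paper applies the commutator estimate \eqref{moll_est_3} directly to $\div R^{\ell,\nu}$ in $L^{4/3}_x$; both yield the same bound.
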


\begin{proof}
By applying \eqref{D_decomp_NS} to the space independent function $\eta$ we get 
$$
\abs{\left\langle  \mathcal E^\nu ,\eta\right\rangle } \lesssim \norm{E^{\ell,\nu} \eta' }_{L^1_{x,t}} +  \norm{C^{\ell,\nu}\eta}_{L^1_{x,t}} + \nu \int \abs{\nabla u^\nu_\ell}^2 \eta +\left( \nu \int \abs{\nabla u^\nu_\ell}^2 \eta\right)^\frac12,
$$
where we have used that $\sup_{\nu>0} \nu\int \abs{\nabla u^\nu}^2<\infty$ by assumption.  Choose $\ell=\ell_\nu$. Since $\{u^\nu\}_{\nu>0}$ is bounded in $L^2_t B^\sigma_{2,\infty}$, we use \eqref{est_Eell} to estimate the first term. Thus, we infer
\begin{align}
    \limsup_{\nu \to 0} \abs{\left\langle  \mathcal E^\nu ,\eta\right\rangle } & \lesssim \limsup_{\nu \to 0 }\left(\ell_\nu^{2\sigma} + \norm{C^{\ell_\nu,\nu}\eta}_{L^1_{x,t}} + \nu \int \abs{\nabla u^\nu_{\ell_\nu}}^2 \eta + \left( \nu \int \abs{\nabla u^\nu_{\ell_\nu}}^2 \eta\right)^\frac12 \right)
    \\ & \leq \limsup_{\nu \to 0} \norm{C^{\ell_\nu, \nu} \eta}_{L^{1}_{x,t}}. 
\end{align}
Note that so far we have only used that $\{u^\nu\}_{\nu>0}$ is bounded in $L^2_tB^\sigma_{2,\infty}$. The stronger assumption $L^4_tB^\sigma_{4,\infty}$ is needed to handle $\norm{C^{\ell_\nu,\nu}\eta}_{L^1_{x,t}}$. Indeed, we estimate
\begin{align}
\norm{C^{\ell_\nu,\nu}\eta}_{L^1_{x,t}}&\lesssim \norm{u^\nu-u^\nu_{\ell_\nu} }_{L^4_{x,t}}^2 \norm{\nabla u^\nu_{\ell_\nu} \sqrt{\eta}}_{L^2_{x,t}} + \norm{u^\nu-u^\nu_{\ell_\nu}}_{L^4_{x,t}} \norm{\eta^\frac34 \div R^{\ell_\nu,\nu}}_{L^\frac{4}{3}_{x,t}} \\
&\lesssim \ell_\nu^{2\sigma} \norm{\nabla u^\nu_{\ell_\nu} \sqrt{\eta}}_{L^2_{x,t}} + \ell_\nu^{2\sigma} \left(  \int \abs{\nabla u^\nu}^2 \eta\right)^\frac12.
\end{align}
The last inequality follows by the commutator estimate
$$
\norm{\div R^{\ell_\nu,\nu}}_{L^\frac{4}{3}_{x}}\lesssim \ell_\nu^{\sigma} \norm{u^\nu}_{B^\sigma_{4,\infty}} \norm{\nabla u^\nu }_{L^2_x},
$$
which can be obtained by writing $\div R^{\ell_\nu,\nu} = u^\nu_{\ell_\nu}\cdot \nabla u^\nu_{\ell_\nu} - (u^\nu\cdot \nabla u^\nu)_{\ell_\nu}$. We conclude
\begin{align}
\limsup_{\nu\rightarrow 0}\abs{\left\langle  \mathcal E^\nu ,\eta\right\rangle } \lesssim \limsup_{\nu\rightarrow 0} \left(\left(\frac{\ell_\nu^{4\sigma}}{\nu} \nu \int \abs{\nabla u^\nu_{\ell_\nu}}^2 \eta \right)^{\frac12} +\left(\frac{\ell_\nu^{4\sigma}}{\nu} \nu \int \abs{\nabla u^\nu}^2 \eta \right)^{\frac12}\right)=0.
\end{align} 
\end{proof}

\begin{remark}
    In Corollary \ref{C:resolved_diss}, instead of $\eta\in C^\infty_t$, we could have chosen a space-time test function $\varphi\in C^\infty_{x,t}$. This only causes the appearance of three more local terms which do not require any additional assumption to be estimated.
\end{remark}
\begin{remark}
    When $\sigma=\frac13$ both the scales from Corollary \ref{C:ff law} and Corollary \ref{C:resolved_diss} can  almost be chosen as $\ell_\nu\sim \nu^{\frac34}$, i.e. the Kolmogorov dissipative length scale. In Corollary \ref{C:dissipative range onsager} one can choose  $\ell_\nu=\nu^\frac34$ exactly. Any scale asymptotically smaller than $\nu^\frac34$ would not be compatible with the first condition in \eqref{condition on ell dissipative} and, in order to rule out anomalous dissipation, the Onsager subcritical regularity must be  asked (at least) in the whole dissipative range.
\end{remark}

\begin{corollary}[Shinbrot local energy balance \cite{S74}]
 Let $u^\nu\in L^\infty_t L^2_x\cap L^2_t H^1_x \cap L^3_{x,t}$ be a weak solution to \eqref{NS}. Assume $u^\nu\in L^m_tL^p_x$ for some $m,p$ such that $\frac{2}{p}+\frac{2}{m}\leq 1$, $p\geq 4$. Then \eqref{enbal_NS} holds with $D^\nu\equiv 0$.
\end{corollary}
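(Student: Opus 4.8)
To prove the claim I would test the identity \eqref{D_decomp_NS_new} against an arbitrary $\varphi\in C^\infty_{x,t}$ with compact support and pass to the limit $\ell\to 0$, suppressing the superscript $\nu$ throughout. Since the left-hand side $-\langle D,\varphi\rangle$ does not depend on $\ell$, it suffices to show that every term on the right-hand side tends to $0$; this forces $\langle D,\varphi\rangle=0$ for all $\varphi$, i.e.\ $D\equiv 0$, which is precisely \eqref{enbal_NS} with vanishing source. Two preliminary facts drive the argument. First, on a bounded time interval one has $m\ge r$ whenever $\tfrac2p+\tfrac2r=1$ (because $\tfrac2m\le 1-\tfrac2p=\tfrac2r$), so $u\in L^r_tL^p_x$ and, for $p<\infty$, both $\|u-u_\ell\|_{L^r_tL^p_x}\to 0$ and $\sup_{|y|<\ell}\|u(\cdot-y)-u\|_{L^r_tL^p_x}\to 0$ by continuity of translations. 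Second, $u\in L^2_tH^1_x$ gives $\|\nabla(u-u_\ell)\|_{L^2_{x,t}}\to 0$, which instantly disposes of the viscous remainder $\nu|\nabla(u_\ell-u)|^2$.

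Next I would dispatch the flux terms $E^\ell$ and $Q^\ell$. Testing $(\partial_t+u_\ell\cdot\nabla-\nu\Delta)E^\ell$ against $\varphi$ transfers all derivatives onto $\varphi$, pairing $E^\ell=\tfrac12|u-u_\ell|^2$ with $\partial_t\varphi$, $\nu\Delta\varphi$, and $u_\ell\cdot\nabla\varphi$; the first two vanish because $\|E^\ell\|_{L^1_{x,t}}\to 0$, while the transport term, together with the cubic and pressure contributions of $\div Q^\ell$, is controlled by interpolating $u-u_\ell$ between its small $L^r_tL^p_x$ norm and its bounded $L^\infty_tL^2_x$ norm, exactly as in Shinbrot's estimates. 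The pressure contribution additionally uses $\|q-q_\ell\|_{L^{m/2}_tL^{p/2}_x}\to 0$, where $q\in L^{m/2}_tL^{p/2}_x$ by Calderón--Zygmund. In every case the vanishing factor is supplied by $u-u_\ell\to 0$ (respectively $q-q_\ell\to 0$).

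The crux is the commutator term $C^\ell$. Using $\div R^\ell=u_\ell\cdot\nabla u_\ell-(u\cdot\nabla u)_\ell$, the two summands of $C^\ell$ collapse into the single commutator
\[
C^\ell=(u-u_\ell)\cdot\big(u\cdot\nabla u_\ell-(u\cdot\nabla u)_\ell\big),
\]
and the elementary identity $u\cdot\nabla u_\ell-(u\cdot\nabla u)_\ell=\int\rho_\ell(y)\,\big(u(\cdot)-u(\cdot-y)\big)\cdot\nabla u(\cdot-y)\,dy$ rewrites $\langle C^\ell,\varphi\rangle$, for each fixed $y$ with $|y|<\ell$, as a finite sum of trilinear forms $\int g\,v\cdot\nabla f$ with $g=\varphi(u-u_\ell)$, $v=u(\cdot)-u(\cdot-y)$ and $f=u(\cdot-y)$. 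I would then apply \cref{L:shin} to each: since $\|\nabla f\|_{L^2_{x,t}}=\|\nabla u\|_{L^2_{x,t}}$ and both $\|g\|_{L^\infty_tL^2_x}$, $\|g\|_{L^r_tL^p_x}$ are bounded, while the small factor $\|v\|_{L^r_tL^p_x}=\|u(\cdot-y)-u\|_{L^r_tL^p_x}$ is uniformly small for $|y|<\ell$, integrating against $\rho_\ell(y)\,dy$ yields $\langle C^\ell,\varphi\rangle\to 0$. Note that the smallness here rides on the factor $\|v\|$, so the degenerate exponent $r/2-1$ multiplying $\|g\|$ causes no trouble.

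I expect the borderline integrability to be the only genuine obstacle. For every finite $p\ge 4$ the conjugate exponent $r=\tfrac{2p}{p-2}$ is finite, so the smallness of the increments and of $u-u_\ell$ in $L^r_tL^p_x$ is legitimate, and \cref{L:shin} is precisely the interpolation through the uniform $L^\infty_tL^2_x$ bound that closes the estimate on the whole admissible range $\tfrac2p+\tfrac2m\le 1$, where a naive Hölder splitting fails as soon as $m<4$. The remaining endpoint $p=\infty$ is delicate because translations are not continuous in $L^\infty_x$; when $m>2$ I would reduce to a large finite $p'<\infty$ with $\tfrac2{p'}+\tfrac2m\le 1$ using $u\in L^m_tL^\infty_x\subset L^m_tL^{p'}_x$, leaving only the genuine endpoint $(p,m)=(\infty,2)$ to be handled by a separate limiting argument.
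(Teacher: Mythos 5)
Your proposal is essentially correct and rests on the same two pillars as the paper's proof: the decomposition \eqref{D_decomp_NS_new} and Shinbrot's inequality (\cref{L:shin}). The difference lies in how the commutator term is organized. The paper keeps the splitting $C^\ell=C^\ell_1+C^\ell_2$ from \cref{P:decomposition_NS}, applies \cref{L:shin} to $C^\ell_2=(u-u_\ell)\otimes(u-u_\ell):\nabla u_\ell$ with the mollification error $u-u_\ell$ playing the role of both $v$ and $g$ (so the smallness comes from $\|u-u_\ell\|_{L^m_tL^p_x}\to0$), and then rewrites $C^\ell_1=(u-u_\ell)\otimes u_\ell:\nabla u_\ell+(u-u_\ell)\cdot(u\cdot\nabla u)_\ell$ so that the same estimate applies. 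You instead collapse $C^\ell$ into the single commutator $(u-u_\ell)\cdot\bigl(u\cdot\nabla u_\ell-(u\cdot\nabla u)_\ell\bigr)$ --- the algebra checks out --- and represent it as an average over translations, applying \cref{L:shin} for each fixed shift $y$ with the smallness riding on the increment $\|u(\cdot-y)-u\|_{L^r_tL^p_x}$. This is a clean variant that treats both halves of $C^\ell$ at once and makes transparent why a single small factor suffices (so the degenerate exponent $\tfrac r2-1$ on $g$ is indeed harmless); the paper's version avoids the Fubini over the mollifier but needs the extra rewriting of $C^\ell_1$. Both arguments reduce to the equality case $\tfrac2p+\tfrac2r=1$ (legitimate on a bounded time interval) and both break at $p=\infty$ for the same reason, namely the failure of continuity of translations in $L^\infty_x$.

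The only incomplete point is the endpoint $(p,m)=(\infty,2)$, which is admissible under the hypotheses and which you explicitly defer to ``a separate limiting argument'' without supplying one. The fix is one line and is exactly what the paper does: $u\in L^\infty_tL^2_x\cap L^2_tL^\infty_x$ implies $u\in L^4_{x,t}$ by interpolation, and the already-established case $p=m=4$ applies. With that sentence added your proof is complete. A minor stylistic remark: the terms $E^\ell$, $Q^\ell$ and the viscous remainder do not require any Shinbrot-type interpolation --- the standing assumptions $u\in L^3_{x,t}$, $q\in L^{3/2}_{x,t}$ and $u\in L^2_tH^1_x$ already give $\|E^\ell\|_{L^{3/2}_{x,t}}+\|Q^\ell\|_{L^1_{x,t}}+\|\nabla(u-u_\ell)\|_{L^2_{x,t}}\to0$ directly, which is how the paper dispatches them.
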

\begin{proof}
Since viscosity is fixed, we omit the superscripts $\nu$. Since $u\in L^3_{x,t}\cap L^2_tH^1_x$ and $q\in L^{\frac32}_{x,t}$, we have
$$
\norm{E^\ell}_{L^\frac{3}{2}_{x,t}} + \norm{Q^\ell}_{L^1_{x,t}} + \norm{\nabla (u_\ell - u)}_{L^2_{x,t}}\rightarrow 0 \qquad \text{as } \ell\rightarrow 0.
$$
Thus, by \eqref{D_decomp_NS_new}, $D\equiv 0$ whenever $ \norm{C^\ell}_{L^1_{x,t}} \to  0$ as $\ell \to 0$. Recall that 
 $$
 C^\ell=(u-u_\ell)\cdot \div R^\ell + (u-u_\ell)\otimes (u-u_\ell):\nabla u_\ell=: C^\ell_1+ C^\ell_2.
 $$ 
By Lemma \ref{L:shin} we  estimate
\begin{equation}
 \norm{C^\ell_2}_{L^1_{x,t}}\leq \norm{u-u_\ell}_{L^m_tL^p_x} \norm{\nabla u_\ell}_{L^2_{x,t}} \norm{u-u_\ell}_{L^\infty_tL^2_x}^{2-\frac{m}{2}} \norm{u-u_\ell}_{L^m_t L^p_x}^{\frac{m}{2}-1}.\label{ugly_holder}
 \end{equation}
 Since we are working locally, by possibly reducing $m$ we can assume  $\frac{2}{p}+\frac{2}{m}=1$. In particular, since $p\geq 4$, it must be $m<\infty$. It is then clear that the expression in the right hand side of \eqref{ugly_holder} goes to zero as $\ell\rightarrow 0$, if $p<\infty$, by standard properties of mollifiers. If $p=\infty$, then $m=2$ necessarily. Thus $u\in L^\infty_tL^2_x \cap L^2_tL^\infty_x\subset L^4_{x,t}$ and we can conclude as before.  Moreover, since $u\in L^2_tH^1_x$, the term $C^\ell_1$ can be written as 
 $$
 C^\ell_1=(u-u_\ell) \otimes u_\ell :\nabla u_\ell - (u-u_\ell)\cdot (u\cdot \nabla u)_\ell.
 $$
 Thus, the very same argument we have given for $C^\ell_2$ applies to show $\lim_{\ell\rightarrow 0} \norm{C^\ell_1}_{L^1_{x,t}}=0$.
\end{proof}

The next and last corollary is also new. It provides a uniform bound of the viscous dissipation in a negative Besov space.

\begin{corollary}[Uniform bound viscous dissipation]\label{C:unif_visc_bound}
Let $\{u^\nu\}_{\nu>0}\subset L^2_tH^1_x$ be a sequence of weak solutions to \eqref{NS}. Denote by $\mathcal{E}^\nu:=D^\nu + \nu \abs{\nabla u^\nu}^2$. If $\{u^\nu\}_{\nu>0}$ is bounded in $L^p_tB^\sigma_{p,\infty}$  for some $p\in [3,\infty]$ and $\sigma \in (0,1)$, then for any $\varphi\in C^\infty_{x,t}$ compactly supported $\exists\delta_0>0$, which depends only on the distance of the support of $\varphi$ from the boundary of the space-time domain, such that 
\begin{equation}\label{moll_est_dissipation_NS}
            \abs{\left\langle   \mathcal{E}^\nu-\mathcal{E}^\nu* \rho_\delta,\varphi\right\rangle} \lesssim \delta^{2\sigma} \norm{\varphi }_{W^{2,\frac{p}{p-3}}_{x,t}} \qquad \text{and} \qquad \abs{\left\langle   \mathcal{E}^\nu* \rho_\delta,\varphi\right\rangle } \lesssim \delta^{2\sigma-2} \norm{\varphi}_{L^{\frac{p}{p-3}}_{x,t}}
        \end{equation}
        for all $\delta<\delta_0$, with implicit constants that are uniform in viscosity. In particular, $\left\{ \mathcal{E}^\nu\right\}_{\nu>0}$ is bounded in $B^{2(\sigma-1)}_{\frac{p}{3},\infty}$ in space-time, locally.
\end{corollary}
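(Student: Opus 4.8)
The plan is to establish the two mollification estimates \eqref{moll_est_dissipation_NS} by running the scheme of \cref{P:quant moll est}, now testing the Navier--Stokes decomposition \eqref{D_decomp_NS} against $\varphi-\varphi*\rho_\delta$ (for the first bound) and against $\varphi*\rho_\delta$ (for the second), and then to upgrade them to the Besov estimate exactly as in \textsc{Step 1} of \cref{T:main D regularity}. As there, after multiplying by a space-time cutoff I may treat $\mathcal E^\nu$ as a compactly supported distribution on $\R^{d+1}$; since $\{u^\nu\}$ is bounded in $L^p_tB^\sigma_{p,\infty}$, the bounds \eqref{est_Eell}, \eqref{est_Qell}, \eqref{est_Cell} and the pressure estimate \eqref{pressure_double} hold with constants independent of $\nu$. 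Because the relevant regime is the vanishing-viscosity one, I may assume $\nu\le 1$, so that every explicit factor of $\nu$ below is harmless; this is the meaning of ``uniform in viscosity''. Throughout I take $\ell=\delta$, and I may assume $\sigma\in(0,1)$.

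For the inviscid part $(\partial_t+u^\nu_\ell\cdot\nabla)E^{\ell,\nu}+\div Q^{\ell,\nu}+C^{\ell,\nu}$ the computation is verbatim the one in \cref{P:quant moll est} and already obeys the desired rates. The crux is the viscous block $-\nu\Delta E^{\ell,\nu}+\nu\abs{\nabla u^\nu_\ell}^2-2\nu\nabla u^\nu:\nabla u^\nu_\ell$. Since $\nabla u^\nu$ lies only in $L^2$ and carries no viscosity-uniform bound (its square is the dissipation), I would first remove it exactly as in \eqref{reass_last_term}, rewriting the last two terms, when tested against a function $g$, as $-\nu\int\abs{\nabla u^\nu_\ell}^2 g+2\nu\int(u^\nu-u^\nu_\ell)\cdot\Delta u^\nu_\ell\,g+2\nu\int(u^\nu-u^\nu_\ell)\otimes\nabla g:\nabla u^\nu_\ell$. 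Each factor is now controlled by \eqref{moll_est_1}--\eqref{moll_est_2}, namely $\norm{u^\nu-u^\nu_\ell}_{L^p}\lesssim\ell^\sigma$, $\norm{\nabla u^\nu_\ell}_{L^p}\lesssim\ell^{\sigma-1}$ and $\norm{\Delta u^\nu_\ell}_{L^p}\lesssim\ell^{\sigma-2}$, so the full block is bounded by $\nu\ell^{2\sigma-2}\norm{g}_{L^{p/(p-3)}}+\nu\ell^{2\sigma-1}\norm{\nabla g}_{L^{p/(p-3)}}+\nu\ell^{2\sigma}\norm{\Delta g}_{L^{p/(p-3)}}$, the last piece coming from $-\nu\int E^{\ell,\nu}\Delta g$ via \eqref{est_Eell}.

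The device that makes the difference estimate close is that, $\rho$ being an even Friedrichs mollifier, its first moment vanishes and hence the second-order bound $\norm{\varphi-\varphi*\rho_\delta}_{L^q}\lesssim\delta^2\norm{\varphi}_{W^{2,q}}$ holds; this is precisely why the norm $W^{2,p/(p-3)}$ (rather than $W^{1,\cdot}$) is forced. Taking $g=\varphi-\varphi*\rho_\delta$ with $\ell=\delta$, so that $\norm{g}\lesssim\delta^2$, $\norm{\nabla g}\lesssim\delta$ and $\norm{\Delta g}\lesssim 1$ in $W^{2,p/(p-3)}$ units, the three viscous pieces become $\nu\delta^{2\sigma-2}\cdot\delta^2+\nu\delta^{2\sigma-1}\cdot\delta+\nu\delta^{2\sigma}\lesssim\nu\delta^{2\sigma}\norm{\varphi}_{W^{2,p/(p-3)}}$, which together with the inviscid part yields the first estimate. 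For the second I would instead take $g=\varphi*\rho_\delta$ and move all derivatives onto $\rho_\delta$, so that $\norm{\nabla g}\lesssim\delta^{-1}\norm{\varphi}$ and $\norm{\Delta g}\lesssim\delta^{-2}\norm{\varphi}$; the three pieces are then each $\lesssim\nu\delta^{2\sigma-2}\norm{\varphi}_{L^{p/(p-3)}}$, while the inviscid part contributes $\delta^{2\sigma-1}+\delta^{3\sigma-1}\lesssim\delta^{2\sigma-2}$, giving the second estimate.

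Finally, by duality the two estimates read $\norm{\mathcal E^\nu-\mathcal E^\nu*\rho_\delta}_{W^{-2,p/3}}\lesssim\delta^{2\sigma}$ and $\norm{\mathcal E^\nu*\rho_\delta}_{L^{p/3}}\lesssim\delta^{2\sigma-2}$, uniformly in $\nu$. Splitting $\mathcal E^\nu*\phi_k=(\mathcal E^\nu-\mathcal E^\nu*\rho_\delta)*\phi_k+(\mathcal E^\nu*\rho_\delta)*\phi_k$ and using the second-order analogue of \cref{L:young}, so that $\norm{(\mathcal E^\nu-\mathcal E^\nu*\rho_\delta)*\phi_k}_{L^{p/3}}\le\norm{\mathcal E^\nu-\mathcal E^\nu*\rho_\delta}_{W^{-2,p/3}}\norm{\phi_k}_{W^{2,1}}\lesssim\delta^{2\sigma}2^{2k}$, together with Young's inequality for the second term ($\lesssim\delta^{2\sigma-2}$), the choice $\delta=2^{-k}$ gives $\norm{\mathcal E^\nu*\phi_k}_{L^{p/3}}\lesssim 2^{(2-2\sigma)k}$; the low-frequency term in \eqref{besov_norm} is bounded directly from \eqref{enbal_NS} using $\frac{\abs{u^\nu}^2}{2}\in L^{p/3}$ and $\nu\le1$. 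Recalling \eqref{besov_norm}, this is the uniform membership $\{\mathcal E^\nu\}\subset B^{2(\sigma-1)}_{p/3,\infty}$. The only genuine obstacle beyond \cref{P:quant moll est} is the viscosity-uniform control of the viscous block; it is overcome by the combination of the gradient-removing rearrangement \eqref{reass_last_term} with the second-order mollification estimate, which together turn the dangerous factor $\nu\ell^{2\sigma-2}$ into the admissible $\nu\delta^{2\sigma}$.
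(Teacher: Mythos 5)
Your proposal is correct and follows essentially the same route as the paper: test the decomposition \eqref{D_decomp_NS} against $\varphi-\varphi*\rho_\delta$ and $\varphi*\rho_\delta$, rewrite the viscous terms via \eqref{reass_last_term} to avoid any non-uniform dependence on $\nabla u^\nu$, choose $\ell=\delta$ and use $\nu<1$ together with the second-order mollification bound $\norm{\varphi-\varphi*\rho_\delta}_{L^q}\lesssim\delta^2\norm{\varphi}_{W^{2,q}}$, then upgrade to the $B^{2(\sigma-1)}_{\frac{p}{3},\infty}$ bound exactly as in \textsc{Step 1} of the proof of \cref{T:main D regularity}. Your explicit spelling-out of the $W^{-2,\frac{p}{3}}$ duality and the second-order analogue of \cref{L:young} is a faithful elaboration of what the paper leaves to the reader.
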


\begin{proof}
  Use the identity \eqref{D_decomp_NS} with the last two terms computed as in \eqref{reass_last_term} to bound 
  \begin{align}
       \abs{\left\langle   \mathcal{E}^\nu-\mathcal{E}^\nu* \rho_\delta,\varphi\right\rangle} &= \abs{ \left\langle   \mathcal{E}^\nu, \varphi - \varphi* \rho_\delta\right\rangle } \\
       &\lesssim \norm{E^{\ell,\nu}}_{L^\frac{p}{2}_{x,t}} \left( \norm{\partial_t (\varphi - \varphi * \rho_\delta)}_{L^{\frac{p}{p-2}}_{x,t}} + \norm{ u^\nu_\ell}_{L^p_{x,t}} \norm{\nabla (\varphi - \varphi * \rho_\delta)}_{L^{\frac{p}{p-3}}_{x,t}} \right)\\
       &\quad + \nu  \norm{E^{\ell,\nu}}_{L^\frac{p}{2}_{x,t}} \norm{\Delta (\varphi - \varphi * \rho_\delta)}_{L^{\frac{p}{p-2}}_{x,t}}\\
    &\quad +\norm{Q^{\ell,\nu}}_{L^\frac{p}{3}_{x,t}}\norm{\nabla (\varphi - \varphi * \rho_\delta)}_{L^{\frac{p}{p-3}}_{x,t}} + \norm{C^{\ell,\nu}}_{L^\frac{p}{3}_{x,t}}\norm{\varphi - \varphi * \rho_\delta}_{L^{\frac{p}{p-3}}_{x,t}}\\
    &\quad + \nu \left( \norm{\abs{\nabla u^\nu_\ell}^2}_{L^\frac{p}{2}_{x,t}} + \norm{u^\nu-u^\nu_\ell}_{L^p_{x,t}} \norm{\Delta u^\nu_\ell}_{L^p_{x,t}}\right) \norm{\varphi - \varphi * \rho_\delta}_{L^{\frac{p}{p-2}}_{x,t}}\\
    &\quad + \nu \norm{ u^\nu- u^\nu_\ell}_{L^p_{x,t}}\norm{\nabla u^\nu_\ell}_{L^p_{x,t}} \norm{\nabla (\varphi - \varphi * \rho_\delta)}_{L^{\frac{p}{p-2}}_{x,t}}.
  \end{align}
  Thus, by \eqref{moll_est_1}, \eqref{moll_est_2}, \eqref{est_Eell}, \eqref{est_Qell} and  \eqref{est_Cell} we bound
  \begin{align}
       \abs{\left\langle   \mathcal{E}^\nu-\mathcal{E}^\nu* \rho_\delta,\varphi\right\rangle } &\lesssim \left( \ell^{2\sigma}(\delta+\nu) + \ell^{3\sigma} \delta+\ell^{3\sigma-1} \delta^2 + \nu \ell^{2(\sigma-1)} \delta^2+ \nu \ell^{2\sigma-1} \delta \right) \norm{\varphi }_{W^{2,\frac{p}{p-3}}_{x,t}} \\
       &\lesssim (\ell^{2\sigma}+\ell^{2(\sigma-1)} \delta^2+\ell^{2\sigma-1} \delta) \norm{\varphi}_{W^{2,\frac{p}{p-3}}_{x,t}}\lesssim \delta^{2\sigma} \norm{\varphi }_{W^{2,\frac{p}{p-3}}_{x,t}},
  \end{align}
  where we have used that $\nu<1$ and in the last inequality we have chosen $\ell=\delta$. Similarly, one can estimate
  $$
\abs{\left\langle   \mathcal{E}^\nu* \rho_\delta,\varphi\right\rangle } \lesssim\delta^{2\sigma-2} \norm{\varphi}_{L^{\frac{p}{p-3}}_{x,t}}.
$$
The implicit constants in all the inequalities above depend only on local $L^p_tB^\sigma_{p,\infty}$ norms of $u^\nu$, which we are assuming to enjoy a bound uniform in viscosity. This proves \eqref{moll_est_dissipation_NS}. Then, the fact that $\left\{ \mathcal E^\nu\right\}_{\nu>0}$ stays bounded in $B^{2(\sigma-1)}_{\frac{p}{3},\infty}$ follows by the same argument of \textsc{Step 1} in the proof of Theorem \ref{T:main D regularity}.
\end{proof}


\section{Discussion} \label{s: comments} 

\subsection{Intermittency in turbulence}\label{S:intermittency turbulence}
In this section $u^\nu$ will denote a sufficiently regular solution to \eqref{NS} with $\nu>0$, in three dimensions. Although the result is true in any spatial dimension $d\geq 2$, the only physically meaningful case is when $d=3$. This is because in two dimensions the strong $L^2_{x,t}$ compactness is already inconsistent with a non-trivial dissipation \cites{LMP21,DRP25}. We introduce the ``absolute structure functions exponents" $\zeta_p$ as
$$
\langle |u^\nu(x+\ell z)-u^\nu(x)|^p\rangle \sim \ell^{\zeta_p} \qquad z\in \mathbb{S}^{2}, \, p\geq 1.
$$
Here the symbol $\langle\cdot\rangle$ denotes some relevant averaging procedure that might be space, time or ensemble. Mathematically, this translates into an exact $B^{\sigma_p}_{p,\infty}$ spatial Besov regularity with $\sigma_p=\frac{\zeta_p}{p}$.
Under the assumptions of homogeneity, isotropy, self-similarity and that all the main statistics of the fluid are completely determined by the non-trivial  energy dissipation rate, the Kolmogorov theory of turbulence \cite{K41} from 1941 predicts the universal dependence $\zeta_p=\frac{p}{3}$, linear in $p\in [1,\infty]$. This yields a $\frac13$ H\"older exponent of the velocity field uniform in viscosity, space and time, connecting the Kolmogorov statistical framework to the subsequent ideal and deterministic picture of Onsager \cite{O49} from 1949. We refer to the recent essay \cite{E24} describing the Onsager contributions to the theory of turbulence.

{Landau expressed his scepticism about Kolmogorov's hypotheses already in 1942  (see \cite{Frisch91} for a historical account). As later confirmed, there is indeed strong empirical evidence \cites{AGHA84,Sigg82,ISY20,BT49} that, in actual turbulence, space-time homogeneity and self-similarity break down, making any approach based on the latter inadequate to describe flows at high Reynolds number.  In their visionary work \cite{BT49}, among the first to detect intermittency,  Batchelor and Townsend write
 \begin{changemargin}{1cm}{1cm} 
\begin{center}
\textit{...the energy associated with large wave-numbers is very unevenly distributed in space. There appear to be isolated regions in which the large wave-numbers are ‘activated’, separated by regions of comparative quiescence.}
\end{center}
\end{changemargin}
 } 
 This ubiquitous phenomenon is known as ``intermittency" \cites{Frisch95,BT23}.  Despite several attempts \cites{K62,FP85,PV87,Frisch91,Mandelbrot,CJPV94,BPPV84,MBCBR95}, a quantitative theoretical understanding of intermittency from first principles is still missing.  What appears true from observation is the emergence of a continuous spectrum of H\"older exponents spreading over the domain, possibly resulting into a spotty\footnote{There is evidence for the velocity field to retain a H\"older exponent strictly bigger than $\frac13$ on most of the domain \cite{Sidd24}.} and (multi)fractal distribution of the energetically active regions in space-time \cites{Meneveau87,Meneveau88,Meneveau91}.  See  \cite{Falc}*{Chapter 17} for a rigorous analysis  of multifractal measures {and \cite{dubrulle18} for recent experimental and numerical studies on singularities, fractal dissipation and intermittency.} Along these lines, our results show a quantitative downward deviation from the exponent $\frac{1}{3}$ for all moments $p>3$. More precisely, the following is a direct consequence of Corollary \ref{C:main intermittency}.

 \begin{corollary}[Vanishing viscosity intermittency]
     On $\T^3\times (0,T)$, let $\{u^\nu\}_{\nu>0}\subset L^2_t H^1_x$ be a sequence of weak solutions to \eqref{NS}. Assume that, in the limit as $\nu\rightarrow 0$, $\mathcal{E}^\nu:=D^\nu+\nu |\nabla u^\nu|^2$ converges in $\mathcal D'_{x,t}$ to a measure whose singular part with respect to the Lebesgue measure is non-trivial and concentrated on a set $S$ with $\dim_{\mathcal H} S=\gamma \in [ 1,4]$.  For all  $p\in [3,\infty]$ for which there exists $\zeta_p\in (0,p)$ such that $\{u^\nu\}_{\nu>0}$ stays bounded in $ L^p_t B^{\frac{\zeta_p}{p}}_{p,\infty}$, it must hold
    \begin{equation}\label{eq: intermittenc viscos}
\zeta_p \leq \frac{p}{3} -  \frac{2\kappa(p-3)p}{ 9p -3\kappa (p-3)},
            \end{equation}
            where $\kappa:=4-\gamma$ is the codimension of the dissipation concentration set.
 \end{corollary}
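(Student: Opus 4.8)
The plan is to reduce the statement to the Euler-level constraint of \cref{C:main intermittency}, applied with $d=3$, by first passing to the vanishing-viscosity limit and then converting the resulting bound on $\sigma_p$ into the claimed bound on $\zeta_p=p\sigma_p$. First I would fix any admissible $p\in[3,\infty]$ with $\sigma_p:=\zeta_p/p\in(0,1)$. The uniform bound $\sup_\nu\norm{u^\nu}_{L^p_tB^{\sigma_p}_{p,\infty}}<\infty$ provides spatial compactness, while \eqref{NS} (tested against divergence-free fields) controls the time regularity of $u^\nu$; an Aubin--Lions argument then yields a subsequence converging strongly in $L^3_{x,t}$ to a limit $u$ which is a weak solution to \eqref{E}. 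By lower semicontinuity of the $L^p_tB^{\sigma_p}_{p,\infty}$ norm under weak-$*$ limits, $u\in L^p_tB^{\sigma_p}_{p,\infty}$ for every admissible $p$.

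Next I would identify the limiting measure with the Duchon--Robert measure of $u$. Testing \eqref{enbal_NS} against $\varphi\in C^\infty_{x,t}$ and rearranging, the term $\nu\Delta\varphi$ vanishes as $\nu\to 0$, while the flux terms converge by the strong $L^3_{x,t}$ convergence of $u^\nu$ together with the convergence $q^\nu\to q$ in $L^{3/2}_{x,t}$ inherited via the Calderón--Zygmund estimate from $u^\nu\otimes u^\nu\to u\otimes u$ in $L^{3/2}_{x,t}$. Hence $\langle\mathcal E^\nu,\varphi\rangle\to\langle D,\varphi\rangle$ in $\mathcal D'_{x,t}$, where $D$ is the Euler Duchon--Robert distribution of $u$ defined by \eqref{enbal_E}. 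Since $\mathcal E^\nu\to\mathcal E$ by hypothesis, we conclude $\mathcal E=D$, so $D$ is a non-trivial Radon measure concentrated on $S$ with $\dim_{\mathcal H}S=\gamma$.

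Finally I would invoke \cref{C:main intermittency} with $d=3$: since $u\in L^p_tB^{\sigma_p}_{p,\infty}$ and $D$ is concentrated on $S$,
\[
\frac{2\sigma_p}{1-\sigma_p}\leq 1-\frac{p-3}{p}(4-\gamma)=1-\frac{(p-3)\kappa}{p}=:A.
\]
As $s\mapsto \frac{2s}{1-s}$ is strictly increasing on $(0,1)$, this rearranges to $\sigma_p\leq \frac{A}{2+A}$, whence $\zeta_p=p\sigma_p\leq \frac{pA}{2+A}$. Writing $b:=\kappa(p-3)$, so that $A=(p-b)/p$ and $2+A=(3p-b)/p$, a direct computation gives $\frac{pA}{2+A}=\frac{p(p-b)}{3p-b}=\frac{p}{3}-\frac{2bp}{3(3p-b)}$, which is exactly the right-hand side of \eqref{eq: intermittenc viscos}.

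The main obstacle is the identification step of the second paragraph: one must ensure that the full limiting measure $\mathcal E$ coincides with the intrinsic Duchon--Robert measure $D$ of the limit $u$, i.e. that no dissipation is created or lost in the inviscid limit. Given the strong $L^3_{x,t}$ convergence this is routine, but it is the only place where the viscous structure $\mathcal E^\nu=D^\nu+\nu\abs{\nabla u^\nu}^2$ genuinely enters; the remaining steps are the compactness extraction and the elementary algebra above.
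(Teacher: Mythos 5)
Your proposal is correct and follows essentially the route the paper intends: the paper states this corollary as a direct consequence of \cref{C:main intermittency} without writing out the details, and your argument (compactness extraction, identification of the limiting measure with the Duchon--Robert measure of the limiting Euler solution via strong $L^3_{x,t}$ convergence, then the algebraic rearrangement of \eqref{eq:intermittency} with $d=3$ and $\kappa=4-\gamma$) is exactly what that reduction requires. The algebra checks out, so there is nothing further to add.
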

Some remarks are in order. The assumption on the distributional convergence of the sequence $\{\mathcal E^\nu\}_{\nu>0}$ towards a measure is  satisfied in several cases, possibly up to subsequences. Indeed, for any sequence of suitable Leray--Hopf weak solutions emanating from $L^2_x$ bounded initial data,  $\{\mathcal{E}^\nu\}_{\nu>0}$ is a bounded sequence of positive space-time measures, and then it admits a weak limit in $\mathcal M_{x,t}$. The restriction to $\gamma \geq 1$, i.e. $\kappa\leq 3$, is natural since otherwise finding a $\sigma_p>0$ for which \eqref{eq:intermittency} holds would necessarily require $p$ to be quantitatively below $\frac{9}{2}$. This is unnatural in view of the Sobolev embedding $B^\frac13_{3,\infty}\subset L^{\frac92-}$  and the exactness of the Four-Fifths law. In this range of parameters, it is then clear that \eqref{eq: intermittenc viscos} forces the structure functions exponents to quantitatively deviate from the Kolmogorov prediction $\zeta_p=\frac{p}{3}$ for all $p>3$ as soon as $\kappa >0$ and the dissipation is a non-trivial lower-dimensional measure. Consequently, intermittency must happen and the larger the $p$  the larger the discrepancy. If $\kappa < 1$, there will be a value $p^*\in (3,\infty)$ at which the right hand side in \eqref{eq: intermittenc viscos} vanishes. In that case, the solution cannot have any fractional regularity in $L^{p}$ for all $p\geq p^*$. The results from \cite{DDI24} can be then applied, matching with the numerology. It is important to note that for $p=3$ lower dimensionality does not give any correction, which is  consistent with the Kolmogorov Four-Fifths law being exact.

The use of Hausdorff to measure the dimension  allows to deduce intermittency even if the concentration set {of the singular part} happens to be locally dense. In view of the wild behavior at high Reynolds numbers, this seems to be essential to capture relevant dissipative properties of the flow. Indeed, even simpler dynamical models such as the one-dimensional Burgers equation might exhibit shocks, and thus dissipation, proliferating over a dense set \cite{SAF92}. It is therefore reasonable to expect an even more intricate scenario for the Navier--Stokes equations, where the dynamics is further shaped by geometric constraints.

\subsection{Comparison with experimental data} The celebrated works of Meneveau and Sreenivasan studied the relationship between properties of the energy dissipation measure and intermittency 
\cites{Meneveau87,Meneveau88,Meneveau91}. These papers  suggest from experiments that, in the infinite Reynolds number limit, the anomalous energy dissipation measure at fixed time is concentrated on a fractal subset of dimension less than the space dimension 3, about 2.87, and has volume zero \cite{Meneveau87}*{Section 4}. Moreover, based on the data, it is supposed that this fractal dimension is roughly constant in time. We now give an argument for this value based on the formalism developed in this paper.
It seems reasonable that our considerations are very close to be sharp locally\footnote{We are mainly appealing to two facts for $p\approx 3$: the dissipation is what constraints the regularity the most and the almost saturation of our upper bound. Modulo the possible gap of longitudinal vs absolute increments, the exactness of the Four-Fifths law makes both claims valid at $p=3$.}  
at $p=3$.  Set
$$
\zeta_p^* := \frac{p}{3} -  \frac{2\kappa(p-3)p}{ 9p -3\kappa (p-3)},
$$
i.e. the right hand side in $\eqref{eq: intermittenc viscos}$.
In dimension $d=3$, it is readily verified that
$$
\tfrac{d \zeta_p^*}{dp}\Big|_{p=3}=\frac{3-2\kappa}{9}=\frac{2\gamma -5}{9}.
$$
High resolution direct numerical simulations of incompressible turbulence  \cite{iyer2020scaling} indicate $\frac{d \zeta_p}{dp}\Big|_{p=3} = 0.303   \pm 6.4\times 10^{-4}$.  This corresponds to $\gamma = 3.85$, remarkably close to the observations of  Meneveau and Sreenivasan as if the time evolution affects the estimated dimension in a purely additive way $2.87+1$. See \cref{fig1} for an inspection of numerical data and this bound.
		\begin{figure}
		\centering
			\includegraphics[width=0.4\textwidth]{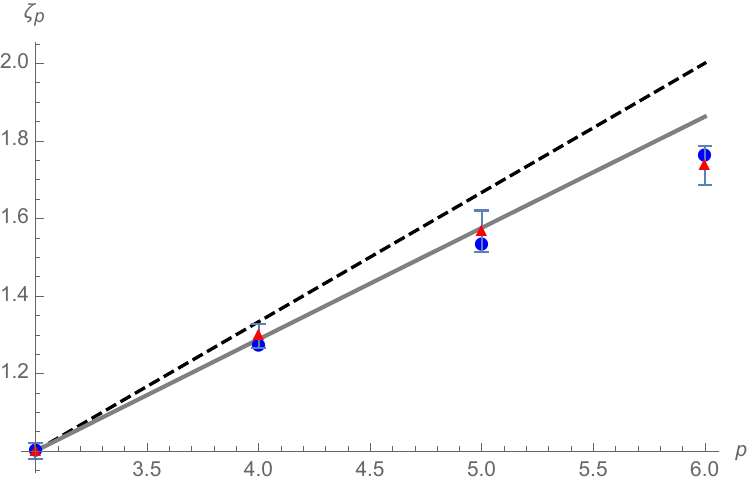} 
					\caption{Structure functions exponents for $p\in[3,6]$.  Blue dots are absolute structure functions exponents measured from the JHU turbulence database. Red triangles are transverse exponents reported in \cite{iyer2020scaling}.  Dashed black line corresponds to the Kolmogorov prediction of $\frac{p}{3}$.  Solid grey line corresponds to our bound $\zeta_p^*$ with $\gamma = 3.85$ inferred from  \cite{iyer2020scaling}.}\label{fig1}
	\end{figure}

 \subsection{Sharpness of the results \& convex integration} 
 Recent years have seen a quite intense mathematical work \cites{DS13,BDIS15,DanSz17,IO17,Buck15} in producing H\"older continuous Euler flows with non-constant kinetic energy, culminating in the resolution of the Onsager conjecture \cites{Is18,BDSV19}. See also \cites{Is24,DRS21,DT22} for refinements. H\"older continuous weak solutions with non-negative $D$ have also been produced \cites{DK22,Is22}, getting closer to the physical case in which Euler arises in the inviscid limit of Navier--Stokes. The method goes under the name of ``convex integration", introduced in this context by De Lellis and Sz\'ekelyhidi \cite{DS13}. The Onsager conjecture being true, at least arbitrarily close to the critical regularity \cite{Is24}, validates the Kolmogorov self-similar and isotropic prediction of a spatially homogeneous $\frac13$ H\"older exponent, although only in the ideal inviscid setting. Very recent works \cites{brue2022onsager,brue2022,SJ23} also prove anomalous dissipation in the inviscid limit, but with the use of an external forcing term. Remarkably, the convex integration methods have been recently modified to incorporate intermittency \cites{NV22,GKN24_1,GKN24_2,BMNV23}, producing Euler weak solutions  with a non-trivial dependence of $\zeta_p$ on $p$, thus going beyond H\"older regularity. 
 
 {All the above works construct dissipative solutions with supercritical regularity. Energy conservation in any subcritical class was previously established in \cites{Eyink95,CET94,CCFS08,bardos2019onsager} by commutator arguments \`a la DiPerna--Lions \cite{DipLi89}. See also the recent work \cite{DIN24} pointing out the relevance of the density of smooth functions. The only critical class where a full picture is currently available is the case of \quotes{organized singularities} \cites{Shv09,DIN24}, or more generally $L^1_tBV_x\cap L^\infty_{x,t}$  \cites{DRINV23, Inv26}. In these settings the incompressibility prohibits energy dissipation although the cubic energy flux may not vanish for kinematic reasons, highlighting the relevance of the geometric constraint $\div u=0$. What happens in general critical Onsager's classes, e.g. $L^\infty_tC^\frac13_x$, remains an important open problem.}

 Producing weak solutions of \eqref{E} with a non-trivial lower-dimensional measure $D$ is not yet done. However, the very recent intermittent constructions \cites{GKN24_1,GKN24_2,NV22,BMNV23} are going into this direction. What is certainly true about the optimality of our results is that the negative Besov regularity $D\in B^{\frac{2\sigma}{1-\sigma}-1}_{\frac{p}{3},\infty}$ cannot be improved in general.  Indeed we have the following. 
 
 \begin{theorem}\label{T:sharp_baire}
    For any $\sigma \in \left(0,\frac13 \right)$ and $p\in [3,\infty]$ there exist uncountably many weak solutions $u\in L^p_t B^\sigma_{p,\infty}$ to \eqref{E} in $d=3$ such that 
    $$
    D\in  B^{\frac{2\sigma}{1-\sigma}-1}_{\frac{p}{3},\infty}\setminus\bigcup_{\eps>0}B^{\frac{2\sigma}{1-\sigma}-1+\eps}_{\frac{p}{3},\infty}.
    $$
\end{theorem}
\begin{proof}
    Let $\sigma\in \left(0,\frac13\right)$. Pick any element $u$ in the residual set of \cite{DT22}*{Theorem 1.2}. Then $u\in L^\infty_t C^\sigma_x$ and, denoting by $e$ its kinetic energy, it holds 
    \begin{equation}
        \label{kin en holder sharp}
        e\in C^{\frac{2\sigma}{1-\sigma}}_t\setminus \bigcup_{\eps>0}W^{\frac{2\sigma}{1-\sigma}+\eps,1}_t.
    \end{equation}
   Clearly $u\in L^p_t B^\sigma_{p,\infty}$ for all $p\in [3,\infty]$. Thus, by Theorem \ref{T:main D regularity} we get $D\in B^{\frac{2\sigma}{1-\sigma}-1}_{\frac{p}{3},\infty}$ locally in space-time. To conclude, it is enough to prove $D\not\in  B^{-\alpha}_{1,\infty}$ for all $\alpha<1-\frac{2\sigma}{1-\sigma}$. By contradiction, assume $D\in B^{-\alpha}_{1,\infty}$ for some $\alpha<1-\frac{2\sigma}{1-\sigma}$. By possibly enlarging the value of $\alpha$, but keeping $\alpha<1-\frac{2\sigma}{1-\sigma}$, we have $D\in W^{-\alpha,s}_{x,t}$ for some $s>1$. Since we are in the spatially periodic setting, $D$ can be applied to space independent functions. Then
\begin{equation}\label{preparing regularity}
    \left|\langle  e' ,\eta\rangle \right|=\left|\langle D,\eta\rangle \right|\lesssim \|\eta\|_{W^{\alpha,s'}_t}\qquad \forall \eta\in C^\infty_t,
    \end{equation}
that is $e'\in W^{-\alpha,s}_t$.
It follows that\footnote{This can be easily seen by the Bernstein-type inequality $\|e_k\|_{L^s_t}\lesssim 2^{-k}\|e'_k\|_{L^s_t}$ for all $k\geq 2$, where $e_k:=e*\phi_k$ denotes the frequency localization (see for instance \cite{BCD11}*{Lemma 2.1}).} $e\in  W^{1-\alpha,s}_t $, contradicting \eqref{kin en holder sharp}.
\end{proof}
   Note that, although the time marginal could be prescribed\footnote{By choosing in \cite{DT22}*{Theorem 1.1} a monotone non-increasing Cantor function with a derivative concentrated on a given set, it is possible to construct solutions whose time marginal of $D$ is a measure supported on a set of times with Hausdorff/Minkowski dimension arbitrarily close to $\frac{2\sigma}{1-\sigma}$. }, for these solutions $D$ is only a space-time distribution and not necessarily a measure. Moreover, in view of the mechanism behind the H\"older based convex integration scheme, there is no a priori control over the space-time support of $D$. Another limitation of \cref{T:sharp_baire} is that the space-time saturation of the regularity of $D$ is fully dictated by that in the time variable, which is unlikely to be realistic. With that being said, it would be interesting to address the sharpness of \cref{T:main D regularity}, possibly for a non-negative dissipation $D$ exhibiting lower dimensionality in space-time. In this context, the stronger statement would be to prescribe any $D$, picked in a suitable subset of $B^{\frac{2\sigma}{1-\sigma}-1}_{\frac{p}{3},\infty}$, for solutions in the corresponding Besov class, perhaps showing full flexibility for Euler flows in saturating any geometrical/analytical constraint imposed by the PDE. The work \cite{DHaft22} is closely related to this discussion,  imposing the additional constraint on the solution being smooth outside the closed dissipative set of times. Differently from \cite{DT22}, the construction in \cite{DHaft22} also provides a dissipation with a space-time lower-dimensional support, although a bit far from the presumed sharp threshold.

An interesting feature of the regularity $D\in B^{\frac{2\sigma}{1-\sigma}-1}_{\frac{p}{3},\infty}$ is that it does not seem to be easily achievable by showing that an approximation of $D$ stays bounded in that space. For instance, although a natural candidate would be the viscous dissipation $\mathcal E^\nu=D^\nu +\nu |\nabla u^\nu|^2$, it is not clear to the authors how to improve the uniform bound of Corollary \ref{C:unif_visc_bound}. The same seems to happen with both the Duchon--Robert approximation \cite{DR00} and the Constantin--E--Titi one \cite{CET94}, somehow suggesting that all the available approximations do not capture essential cancellations which however appear in the limiting object. If the bound in Corollary \ref{C:unif_visc_bound} cannot be improved to the optimal one, there might be fine dissipative mechanisms arising in the limit $\nu\rightarrow 0$ that stand apart from their measurements at very high, but finite, Reynolds numbers.

On a different side than constructing solutions, there are some other recent works addressing the issue of intermittency. For instance, an extensive ``volumetric approach" has been developed in \cites{CS1,CS23} to extract information from the most energetically active parts of the flow at a given scale. This allows to analytically define a notion of dimension and, among other things, to validate the Frisch--Parisi multifractal formalism \cite{FP85}. See also \cites{BS23I,BS23II,J00}. More related to the spirit of this paper are \cites{DRIS24,DDI24}. In particular, in \cite{DRIS24} intermittency for Besov solutions is deduced assuming lower dimensionality of the dissipation in the Minkowski sense, while in \cite{DDI24} by means of the Hausdorff dimension but only for integrable weak solutions, thus failing in making any non-trivial use of fractional regularity. The current paper closes the gap and reconciles the two approaches. 

\subsection{General open sets} \label{S:open_set}
Although all the results in this paper are stated in the spatially periodic setting $\T^d$, we emphasize that they are all intrinsically local. Thus, they carry over any open set $\Omega\subset \R^d$, of course away from the boundary. To do that rigorously, the only thing that has to be fixed is the usual issue with the pressure. The latter being determined only up to arbitrary time dependent functions might be in general not enough to deduce the double pressure regularity, even if only in the interior. As shown in the lemma below, it is enough that the spatial average of $|q|$ has a suitable time integrability. 

\begin{lemma}\label{L:pressure}
Let $\Omega\subset \R^d$ be open. Assume 
$$
-\Delta q =\div \div (u\otimes u) \qquad \text{ in  } \Omega\times (0,T).
$$
Let $p\in (2,\infty]$ and $\sigma\in \left(0,\frac12\right)$. If $q\in L^\frac{p}{2}_{t}L^1_x$ and $u\in L^{p}_t B^\sigma_{p,\infty}$ locally, then $q\in L^\frac{p}{2}_t B^{2\sigma}_{\frac{p}{2},\infty}$ locally.
\end{lemma}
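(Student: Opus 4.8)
The plan is to reduce the local claim to the global double-regularity estimate \eqref{pressure_double} through a localization that quarantines the harmonic ambiguity of the pressure. Fix a ball $B_{2r}$ compactly contained in $\Omega$ together with a cutoff $\chi\in C^\infty_c(B_{2r})$ with $\chi\equiv 1$ on $B_r$. Since $u\in B^\sigma_{p,\infty}$ locally and $\chi$ is smooth with compact support, $\chi u\in B^\sigma_{p,\infty}(\R^d)$, with norm controlled by the local norm of $u$ on $B_{2r}$. Let $q_1$ be the zero-mean solution of $-\Delta q_1=\div\div(\chi u\otimes \chi u)$ obtained via the double Riesz transform; periodizing $\chi u$ on a sufficiently large torus, estimate \eqref{pressure_double} applies slicewise in time and, after taking the $L^{\frac p2}_t$ norm, yields
$$
\|q_1\|_{L^{\frac p2}_t B^{2\sigma}_{\frac p2,\infty}}\lesssim \|\chi u\|^2_{L^p_t B^\sigma_{p,\infty}}<\infty.
$$

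Next I would use that $\chi\equiv 1$ on $B_r$, so that $-\Delta q_1=\div\div(u\otimes u)=-\Delta q$ there; hence $q_2:=q-q_1$ is harmonic in the spatial variable on $B_r$ for a.e.\ $t$. This is exactly where the hypothesis $q\in L^{\frac p2}_t L^1_x$ enters: since $q_1\in L^{\frac p2}_t B^{2\sigma}_{\frac p2,\infty}\hookrightarrow L^{\frac p2}_t L^1(B_r)$ on the bounded ball, subtraction gives $q_2\in L^{\frac p2}_t L^1(B_r)$. Interior estimates for harmonic functions then give, for a.e.\ $t$ and any $k$, $\|q_2(\cdot,t)\|_{C^k(B_{r/2})}\lesssim_{r,k}\|q_2(\cdot,t)\|_{L^1(B_r)}$. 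Choosing $k>2\sigma$ and using the embedding $C^k(B_{r/2})\hookrightarrow B^{2\sigma}_{\frac p2,\infty}(B_{r/2})$ on the bounded ball, followed by the $L^{\frac p2}_t$ norm, shows $q_2\in L^{\frac p2}_t B^{2\sigma}_{\frac p2,\infty}$ locally.

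Combining, $q=q_1+q_2\in L^{\frac p2}_t B^{2\sigma}_{\frac p2,\infty}$ on $B_{r/2}$, and since the ball $B_{2r}\subset\Omega$ was arbitrary the conclusion follows by a covering argument. The restriction $\sigma<\tfrac12$, equivalently $2\sigma<1$, keeps the target space below the Lipschitz threshold, which is convenient for the localization (multiplication by $\chi$ and restriction to subballs interact cleanly with the Besov norm) and is the only regime relevant to the applications in this paper.

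The main obstacle is precisely the non-uniqueness of the pressure on a general open set: in contrast with $\T^d$, the equation $-\Delta q=\div\div(u\otimes u)$ pins down $q$ only up to a spatially harmonic function, so \eqref{pressure_double} cannot be invoked directly. The splitting $q=q_1+q_2$ confines this ambiguity to the harmonic remainder $q_2$, whose interior spatial smoothness is automatic; the sole role of the assumption $q\in L^{\frac p2}_t L^1_x$ is to furnish enough integrability in time of the spatial $L^1$ norm to pass from the pointwise-in-$t$ harmonic estimate to the desired space-time bound.
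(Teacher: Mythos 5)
Your overall architecture is the same as the paper's: localize $u$, solve the whole-space (or periodized) problem to get a piece with double regularity via \eqref{pressure_double}, observe that the remainder is spatially harmonic on the smaller set, and use the hypothesis $q\in L^{\frac p2}_tL^1_x$ together with interior estimates for harmonic functions to control that remainder. The treatment of the harmonic part $q_2$ is fine.

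The genuine gap is in the first step: you apply \eqref{pressure_double} to the field $\chi u$, which is \emph{not} divergence-free ($\div(\chi u)=\nabla\chi\cdot u\neq 0$). The double regularity of the pressure is not a statement about arbitrary quadratic expressions $\div\div(f\otimes f)$; it relies essentially on the cancellation $\partial_i\partial_j(u_iu_j)=\partial_iu_j\,\partial_ju_i$ available only when $\div u=0$. Without that structure the solution of $-\Delta q_1=\div\div(f\otimes f)$ contains contributions comparable to $|f|^2$, which for $f\in B^\sigma_{p,\infty}$ is in general only $B^\sigma_{\frac p2,\infty}$ (already in one dimension, $q_1=-f^2$ up to an affine function, and $f^2$ has no better regularity than $f$ when $f$ does not vanish). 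So the claimed bound $\|q_1\|_{L^{\frac p2}_tB^{2\sigma}_{\frac p2,\infty}}\lesssim\|\chi u\|^2_{L^p_tB^\sigma_{p,\infty}}$ does not follow from \eqref{pressure_double}, and your $q_1$ may only lie in $B^\sigma_{\frac p2,\infty}$, which breaks the conclusion. The paper circumvents exactly this issue by extending $u|_U$ to a \emph{divergence-free} compactly supported field $\tilde u\in B^\sigma_{p,\infty}(\R^d)$ (rather than a plain cutoff) before invoking the double regularity estimate; your argument can be repaired the same way, e.g.\ by correcting $\chi u$ with a Bogovskii-type term $\mathcal B(\nabla\chi\cdot u)$, which gains one derivative and so does not spoil the $B^\sigma_{p,\infty}$ bound, or alternatively by a substantially longer commutator analysis of the cutoff error terms — but as written the step is not justified.
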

\begin{proof}
   Let $U\subset \joinrel \subset \Omega$ be any open set. In what follows $t$ is any fixed instant of time, picked in a full measure subset of $(0,T)$. Since $u(t)\in B^\sigma_{p,\infty}$ locally inside $\Omega$, we find a divergence-free and compactly supported $\tilde u(t)\in B^\sigma_{p,\infty}$ on $\R^d$ such that $\tilde u(t) \equiv u(t)$ on $U$.  Let $\tilde q$ be the unique solution to 
   $$
   -\Delta \tilde q =\div \div (\tilde u\otimes \tilde u) \qquad \text{ in  } \R^d
   $$
   decaying at infinity. By \cites{CDF20,Isett23,ColDeRos} and the continuity of the extension operator
   $$
   \| \tilde q(t)\|_{B^{2\sigma}_{\frac{p}{2},\infty}}\lesssim   \| \tilde u(t)\|^2_{B^\sigma_{p,\infty}}\lesssim   \| u(t)\|^2_{B^\sigma_{p,\infty}(U)},
   $$
   from which we deduce $\tilde q \in L^\frac{p}{2}_t B^{2\sigma}_{\frac{p}{2},\infty}$. Since $q=q-\tilde q +\tilde q$, we are only left to prove $q-\tilde q \in L^\frac{p}{2}_t B^{2\sigma}_{\frac{p}{2},\infty}$ locally inside $U\times (0,T)$. However, since $(q-\tilde q)(t)$ is harmonic in  $U$, the mean value property implies that  its $C^1_x$ norm on compact subsets of $U$ is bounded by the $L^1_x$ norm on $U$. Since  $q \in L^\frac{p}{2}_{t}L^1_x$ by assumption, the proof is concluded.
\end{proof}
Once double pressure regularity holds, all the results in this paper can be replicated in the interior of any open set $\Omega$. 
 \subsection{Intermittency in scalar turbulence} \label{transport}
Let $\Omega\subset \R^d$ be open. Given an incompressible vector field $v:\Omega\times (0,T)\rightarrow \R^d$, consider the transport equation
\begin{equation}\label{T} \tag{T}
\partial_t \theta +\div (\theta v )  =0
 \qquad \text{in }\Omega \times (0,T).
\end{equation}  
The local dissipation $\tilde D$ can be defined as
$$
\partial_t \frac{\abs{\theta}^2}{2} +\div \left(\frac{\abs{\theta}^2}{2} v \right)=-\tilde D \qquad \text{in } \mathcal D'_{x,t}
$$
as soon as $v\in L^p_{x,t}$, $\theta\in L^s_{x,t}$ with $\frac{1}{p}+\frac{2}{s}\leq 1$.  Note that, being a linear equation, weak solutions can be obtained from weak compactness of vanishing diffusivity approximations. Provided that the limit is achieved strongly, it follows that $\tilde D$ is actually a non-negative Radon measure.

Passive scalar transport is a well studied physical system, and there is a wealth of evidence for anomalous dissipation therein \cites{donzis2005scalar,sreenivasan2019turbulent}. Obukhov \cites{obukhov1949structure} and Corrsin \cites{corrsin1951spectrum} derived Onsager-type predictions on the requisite degree of singularity required to see anomalous dissipation in this context. The result is, roughly, that if $\sigma$ represents the fractional regularity of the velocity, the scalar cannot have regularity $\beta$ greater than $\frac{1-\sigma}{2}$. This was made rigorous by Eyink \cite{eyink1996intermittency}, following the works of  Constantin and Procaccia \cites{constantin1993scaling,constantin1994geometry}.  See also the discussion in \cite{drivas2022anomalous}. Recently, there have been mathematical constructions of passive scalars exhibiting anomalous dissipation \cites{drivas2022anomalous,armstrong2025anomalous,BSW23,hess2025universal,SJ24}, even some that nearly exhibit the sharpness of the Obukhov--Corrsin theory in H\"{o}lder spaces \cites{colombo2023anomalous,elgindi2024norm,BSW26}. The end point cases remain open.  In physical situations, however, it is widely expected that the scalar is far from monofractal, and displays anomalous scaling exponents, e.g. the Obukhov--Corrsin theory in H\"{o}lder spaces is far from sharp and instead it holds only in an appropriate critical class: $L^2_x$ on the scalar rather than $L^\infty_x$ if the velocity is H\"{o}lder continuous.  This expectation has a great deal of numerical justification \cites{iyer2018steep,sreenivasan2019turbulent} and a theoretical one in the  Kraichnan model of passive scalar turbulence \cites{bernard1996anomalous,gawedzki1998intermittency}.

The analysis performed here for the Euler equations can be extended to the setting of passive scalars and gives precise constraints on the relation between intermittency and dissipation measure.  In the following theorem we highlight the main implications of our framework.

\begin{theorem}\label{T:passive scalar}
    Let $v\in L^p_{x,t}$ be a given vector field for some $p\in [1,\infty]$. Let $\theta\in L^s_{x,t}$ be a weak solution to \eqref{T} with $\frac{1}{p}+\frac{2}{s}\leq 1$ and with local dissipation $\tilde D\in \mathcal D'_{x,t}$. Set
    \begin{align}
     \tilde E^\ell:=\frac{\abs{\theta-\theta_\ell}^2}{2}, \qquad &\tilde Q^\ell :=\frac{\abs{\theta-\theta_\ell}^2}{2} (v-v_\ell), \qquad \tilde R^\ell:= \theta_\ell v_\ell - (\theta v)_\ell\\ 
     \text{ and } \qquad \tilde C^\ell &:=(\theta - \theta_\ell)\left((v-v_\ell)\cdot \nabla \theta_\ell +\div \tilde R^\ell \right),
     \end{align}
     where the subscripts $\ell$ denote the space mollification. For all $\ell>0$,  the following identity holds
     \begin{equation}
         \label{D_decomp_transp}
         -\tilde D= (\partial_t+v_\ell \cdot \nabla) \tilde E^\ell +  \div \tilde Q^\ell + \tilde C^\ell \qquad \text{in } \mathcal D'_{x,t}.
     \end{equation}
     Assume that $v\in L^p_t B^\sigma_{p,\infty}$ and $\theta\in L^s_t B^\beta_{s,\infty}$ for some $\sigma,\beta\in (0,1)$. Then $\tilde D\in B^{\frac{2\beta}{1-\sigma}-1}_{\frac{ps}{2p+s},\infty}$ locally. If in addition $\tilde D$ is a real-valued Radon measure, we have $|\tilde D|\ll \mathcal H^\gamma$ for any $\gamma \geq 0$ such that 
            \begin{equation}
                \frac{2\beta}{1-\sigma}>1- \frac{p(s-2)-s}{ps} (d+1-\gamma).
            \end{equation}
            When $\tilde D\geq 0$, $\forall K$ compact $\exists r_0>0$ such that
            $$
            \tilde D (B_r(x,t))\lesssim r^{ \frac{2\beta}{1-\sigma}-1+ \frac{p(s-2)-s}{ps} (d+1)}\qquad \forall (x,t)\in K, \, \forall r<r_0.
            $$
\end{theorem}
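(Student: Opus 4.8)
The plan is to transcribe, essentially line by line, the argument of \cref{s: proofs} developed for \eqref{E}, replacing the Euler triple $(E^\ell,Q^\ell,C^\ell)$ by the scalar triple $(\tilde E^\ell,\tilde Q^\ell,\tilde C^\ell)$ and carefully tracking how the two distinct integrabilities $p$ (for $v$) and $s$ (for $\theta$) combine. First I would establish the identity \eqref{D_decomp_transp}. Since \eqref{T} carries no pressure, this is strictly simpler than \cref{P:decomposition_NS}: mollifying \eqref{T} in space gives $\partial_t\theta_\ell+\div(\theta_\ell v_\ell)=\div\tilde R^\ell$, whence the energy balance for $\theta_\ell$; then testing the weak formulation of \eqref{T} with $\theta_\ell\varphi$ and subtracting the balances of $\theta$ and $\theta_\ell$, exactly as in the proof of \cref{P:decomposition_NS}, collapses all cross terms into $(\partial_t+v_\ell\cdot\nabla)\tilde E^\ell+\div\tilde Q^\ell+\tilde C^\ell$. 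The hypothesis $\tfrac1p+\tfrac2s\le1$ is precisely what guarantees $\theta^2v\in L^1_{x,t}$ and so justifies every integration by parts.

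Next I would record the component estimates, the scalar analogue of \eqref{est_Eell}--\eqref{est_Cell}. Writing $q:=\tfrac{ps}{2p+s}$ for the natural output exponent, estimate \eqref{moll_est_1} gives $\norm{\tilde E^\ell}_{L^{s/2}_{x,t}}\lesssim\ell^{2\beta}$; Hölder together with \eqref{moll_est_1} yields $\norm{\tilde Q^\ell}_{L^q_{x,t}}\lesssim\ell^{2\beta+\sigma}$; and for $\tilde C^\ell$ one combines \eqref{moll_est_1}, \eqref{moll_est_2} on the term $(\theta-\theta_\ell)(v-v_\ell)\cdot\nabla\theta_\ell$ with the commutator bound \eqref{moll_est_3} on $\div\tilde R^\ell$, the latter applied with the asymmetric split $\tfrac1m=\tfrac1s+\tfrac1p$ dictated by $\theta\in B^\beta_{s,\infty}$ and $v\in B^\sigma_{p,\infty}$ (so $\div\tilde R^\ell\in L^{m}_{x,t}$ with $m=\tfrac{ps}{p+s}$ at rate $\ell^{\sigma+\beta-1}$), to obtain $\norm{\tilde C^\ell}_{L^q_{x,t}}\lesssim\ell^{2\beta+\sigma-1}$. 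With these in hand, testing \eqref{D_decomp_transp} against $\varphi-\varphi*\rho_\delta$ and against $\varphi*\rho_\delta$ and optimizing with the single choice $\ell^{1-\sigma}=\delta$ reproduces \cref{P:quant moll est} in the form $\abs{\langle\tilde D-\tilde D*\rho_\delta,\varphi\rangle}\lesssim\delta^{\frac{2\beta}{1-\sigma}}\norm{\varphi}_{W^{1,q'}_{x,t}}$ and $\abs{\langle\tilde D*\rho_\delta,\varphi\rangle}\lesssim\delta^{\frac{2\beta}{1-\sigma}-1}\norm{\varphi}_{L^{q'}_{x,t}}$, where $q':=\tfrac{ps}{p(s-2)-s}$ is the conjugate of $q$; here all the lower dual exponents ($\tfrac{s}{s-2}$, etc.) are dominated by $L^{q'}$ on the compact support of $\varphi$. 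Note $\tfrac{1}{q'}=\tfrac{p(s-2)-s}{ps}$ is exactly the coefficient in the dimension threshold.

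From here the two conclusions follow verbatim. For the negative Besov regularity I would run \textsc{Step 1} of the proof of \cref{T:main D regularity}: localize $\tilde D$ to a compactly supported distribution, split $\tilde D*\phi_k$ through $\tilde D*\rho_\delta$, bound the two pieces by \cref{L:young} and Young's inequality using \eqref{phi_k_L1_uniform}, and pick $\delta=2^{-k}$ to get $\norm{\tilde D*\phi_k}_{L^q_{x,t}}\lesssim 2^{(1-\frac{2\beta}{1-\sigma})k}$, i.e. $\tilde D\in B^{\frac{2\beta}{1-\sigma}-1}_{q,\infty}$ locally. For the absolute continuity, assuming $\tilde D$ a Radon measure, I would repeat \textsc{Step 2}: by the Hahn decomposition it suffices to show $\tilde D(S_\pm\cap A)=0$ whenever $\mathcal H^\gamma(A)=0$; cover $S_\pm\cap A$ by balls $B_{r_i}$ with $\sum_i r_i^\gamma<\eps$, glue the cutoffs into $\chi=\max_i\chi^i$, and pair $\tilde D$ with $\chi$ using the duality $B^{-\alpha}_{q,\infty}=(B^\alpha_{q',1})^*$, the embedding $W^{\alpha+\kappa,q'}\subset B^\alpha_{q',1}$, sub-additivity of the $q'$-power of the fractional Sobolev norm under $\max$, and the interpolation bound $\norm{\chi^i}_{W^{1-\frac{2\beta}{1-\sigma}+\kappa,q'}_{x,t}}\lesssim r_i^{(d+1)/q'-1+\frac{2\beta}{1-\sigma}-\kappa}$. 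The strict threshold on $\gamma$ is exactly what lets one pick $\kappa>0$ so that $(\tfrac{2\beta}{1-\sigma}-1-\kappa)q'+d+1>\gamma$, giving $\abs{\langle\tilde D,\chi\rangle}\lesssim(\sum_i r_i^\gamma)^{1/q'}<\eps^{1/q'}$ and hence $\tilde D(S_\pm\cap A)=0$.

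The main obstacle I anticipate is bookkeeping rather than conceptual: one must check that \eqref{moll_est_3} genuinely applies with the asymmetric Hölder split forced by $\theta$ and $v$ living in different Lebesgue scales, and that all three output rates land in the \emph{same} space $L^q_{x,t}$ with $q=\tfrac{ps}{2p+s}$, so that a single exponent governs the right-hand side of \eqref{D_decomp_transp}. A secondary point requiring care is the duality step, which needs $q>1$, i.e. the strict inequality $\tfrac1p+\tfrac2s<1$; the borderline $q=1$ is harmless, since then the threshold reads $\frac{2\beta}{1-\sigma}>1$, equivalently $2\beta+\sigma>1$, which already forces $\tilde C^\ell\to0$ and $\tilde D\equiv0$ by the argument of \cref{C:cet}, making the measure statement vacuous there. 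As in the Euler case, I would note at the outset that $2\beta+\sigma>1$ makes $\tilde D$ vanish, so the only interesting regime is $2\beta+\sigma\le1$, in which the fractional order $1-\frac{2\beta}{1-\sigma}$ asked of the cutoffs lies in $(0,1)$ and the interpolation is licit.
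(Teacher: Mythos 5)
Your proposal is correct and follows exactly the route the paper intends: the paper itself omits the proof of \cref{T:passive scalar}, stating only that the Euler arguments of \cref{s: proofs} carry over, and your transcription — the mollified balance for $\theta_\ell$, the component bounds $\|\tilde E^\ell\|_{L^{s/2}}\lesssim\ell^{2\beta}$, $\|\tilde Q^\ell\|_{L^{q}}\lesssim\ell^{2\beta+\sigma}$, $\|\tilde C^\ell\|_{L^{q}}\lesssim\ell^{2\beta+\sigma-1}$ with $q=\tfrac{ps}{2p+s}$, the choice $\ell^{1-\sigma}=\delta$, and the verbatim repetition of Steps 1–2 with $\tfrac{2\sigma}{1-\sigma}$ replaced by $\tfrac{2\beta}{1-\sigma}$ and $\tfrac{p}{p-3}$ by $q'$ — is precisely that adaptation, with the exponent bookkeeping done correctly. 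Your remarks on the borderline $q=1$ and on the admissible range of the interpolation order are accurate and handle the only points where the translation could have gone wrong.
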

Note that $\abs{\theta}^2 v \in L^{\frac{ps}{2p+s}}_{x,t}$ and its H\"older dual is precisely $\frac{ps}{p(s-2)-s}=1-\frac{2}{s}-\frac{1}{p}$, making clear their appearance in the theorem above. 
An immediate implication of Theorem \ref{T:passive scalar} is the following intermittency-type statement. Since the arguments are the same we have given for Euler, we will omit the proof.

\begin{corollary}
 Let $v\in L^p_{x,t}$ be a given vector field for some $p\in [1,\infty]$. Let $\theta\in L^s_{x,t}$ be a weak solution to \eqref{T} with $\frac{1}{p}+\frac{2}{s}\leq 1$ and with a  local dissipation measure $\tilde D$ with a non-trivial singular part concentrated on a space-time set $S$ with $\dim_{\mathcal H} S=\gamma$. For all such $p$ and $s$  for which there exist $\sigma_p, \beta_{s} \in (0,1)$ such that $v\in L^p_t B^{\sigma_p}_{p,\infty}$ and $\theta\in L^s_t B^{\beta_s}_{s,\infty}$, it must hold 
            $$
            \frac{2\beta_s}{1-\sigma_p}\leq 1- \frac{p(s-2)-s}{ps} (d+1-\gamma).
            $$
\end{corollary}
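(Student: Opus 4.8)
The plan is to replicate verbatim the three-part scheme used for \eqref{E}, taking advantage of the fact that \eqref{T} is linear so that neither the pressure nor any viscous correction enters. Throughout set $r:=\frac{ps}{2p+s}$, so that $\frac1r=\frac2s+\frac1p$ and the H\"older dual is $r'=\frac{ps}{p(s-2)-s}$; the hypothesis $\frac1p+\frac2s\le 1$ is exactly what guarantees $r\ge 1$, i.e. that $\frac{\abs{\theta}^2}{2}v\in L^r_{x,t}$ and hence that $\tilde D$ is a genuine space-time distribution.

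\emph{Step 1 (the identity \eqref{D_decomp_transp}).} I would mollify \eqref{T} in space to get $\partial_t\theta_\ell+\div(\theta_\ell v_\ell)=\div\tilde R^\ell$, and then derive the classical balance $\partial_t\frac{\theta_\ell^2}{2}+\div\big(\frac{\theta_\ell^2}{2}v_\ell\big)=\theta_\ell\,\div\tilde R^\ell$, using $\div v_\ell=(\div v)_\ell=0$. Testing the defining relation for $\tilde D$ against $\varphi\in C^\infty_{x,t}$, expanding $\frac{\theta^2}{2}=\tilde E^\ell+\theta\theta_\ell-\frac{\theta_\ell^2}{2}$, and handling the cross term $\theta\theta_\ell$ by the product rule (licit since $\theta_\ell$ is smooth) together with $\theta_\ell\varphi$ as a test function in the weak form of \eqref{T}, all the terms recombine into \eqref{D_decomp_transp}. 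This is just \cref{P:decomposition_NS} with $\nu=0$ and the pressure absent, so it is strictly shorter than the computation already carried out there.

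\emph{Step 2 (building-block estimates and mollification rates).} From \eqref{moll_est_1}, \eqref{moll_est_2} and H\"older I would obtain $\norm{\tilde E^\ell}_{L^{s/2}_{x,t}}\lesssim \ell^{2\beta}$ and $\norm{\tilde Q^\ell}_{L^{r}_{x,t}}\lesssim \ell^{2\beta+\sigma}$. The term $\tilde C^\ell$ splits into $(\theta-\theta_\ell)(v-v_\ell)\cdot\nabla\theta_\ell$, bounded by $\ell^{\beta}\cdot\ell^{\sigma}\cdot\ell^{\beta-1}=\ell^{2\beta+\sigma-1}$, and $(\theta-\theta_\ell)\,\div\tilde R^\ell$, where the mixed-regularity commutator estimate \eqref{moll_est_3} gives $\norm{\div\tilde R^\ell}_{L^{m}_{x,t}}\lesssim \ell^{\beta+\sigma-1}$ with $\frac1m=\frac1s+\frac1p$, whence again $\norm{\tilde C^\ell}_{L^{r}_{x,t}}\lesssim \ell^{2\beta+\sigma-1}$. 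Inserting these into \eqref{D_decomp_transp} tested against $\varphi-\varphi*\rho_\delta$ and against $\varphi*\rho_\delta$, exactly as in the proof of \cref{P:quant moll est}, and optimising with the choice $\ell^{1-\sigma}=\delta$ (which balances the $\ell^{2\beta}$ term against $\delta\,\ell^{2\beta+\sigma-1}$), yields
\begin{equation*}
\abs{\left\langle \tilde D-\tilde D*\rho_\delta,\varphi\right\rangle}\lesssim \delta^{\frac{2\beta}{1-\sigma}}\norm{\varphi}_{W^{1,r'}_{x,t}},\qquad
\abs{\left\langle \tilde D*\rho_\delta,\varphi\right\rangle}\lesssim \delta^{\frac{2\beta}{1-\sigma}-1}\norm{\varphi}_{L^{r'}_{x,t}}.
\end{equation*}

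\emph{Step 3 (Besov regularity and absolute continuity).} Feeding these two bounds into the Littlewood--Paley splitting of \emph{Step 1} of \cref{T:main D regularity} — using \cref{L:young} and Young's inequality to control $\norm{\tilde D*\phi_k}_{L^r_{x,t}}\lesssim \delta^{\frac{2\beta}{1-\sigma}-1}+\delta^{\frac{2\beta}{1-\sigma}}2^k$ and choosing $\delta=2^{-k}$ — gives $\tilde D\in B^{\frac{2\beta}{1-\sigma}-1}_{r,\infty}$ locally, i.e. the claimed regularity. For the measure statement I would repeat \emph{Step 2} of \cref{T:main D regularity} unchanged: Hahn-decompose $\tilde D$, cover $S_\pm\cap A$ by balls $B_{r_i}$ with $\sum r_i^\gamma<\eps$, glue cutoffs by $\chi=\max_i\chi^i$, and pair with $\tilde D$ through the duality $B^{-\alpha}_{r,\infty}=(B^{\alpha}_{r',1})^*$; interpolating $\norm{\chi^i}_{W^{1-\frac{2\beta}{1-\sigma}+\kappa,r'}_{x,t}}\lesssim r_i^{(d+1)/r'-1+\frac{2\beta}{1-\sigma}-\kappa}$ produces the sum $\big(\sum_i r_i^{(d+1)+r'(\frac{2\beta}{1-\sigma}-1-\kappa)}\big)^{1/r'}$, which is absorbed by $\eps$ precisely when $(d+1)+r'\big(\frac{2\beta}{1-\sigma}-1\big)>\gamma$, i.e. $\frac{2\beta}{1-\sigma}>1-\frac{p(s-2)-s}{ps}(d+1-\gamma)$.

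The argument is almost entirely a transcription of the Euler case, so the only genuinely new work is the bookkeeping in \emph{Step 2}: the commutator estimate \eqref{moll_est_3} must now be applied with the two \emph{different} regularities $\beta$ (for $\theta$) and $\sigma$ (for $v$), and the H\"older exponents must be tracked carefully to land exactly on $L^r_{x,t}$. The one structural subtlety I expect to flag is that the duality step in \emph{Step 3} requires $r>1$, i.e. the strict inequality $\frac1p+\frac2s<1$ — the scalar analogue of the constraint $p>3$ in \cref{T:main D regularity} — since at the endpoint $\frac1p+\frac2s=1$ one has $r'=\infty$ and the absolute-continuity conclusion degenerates, consistently with $p=3$ giving no dimensional correction for Euler.
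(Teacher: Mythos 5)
Your proposal is correct and follows essentially the same route the paper intends: it reconstructs \cref{T:passive scalar} (the transport analogues of the energy identity, the building-block estimates, the mollification rates with the balancing choice $\ell^{1-\sigma}=\delta$, the Besov regularity, and the absolute continuity) with the correct exponents, in particular the identification $r'=\frac{ps}{p(s-2)-s}$ and the degenerate endpoint $r'=\infty$ matching the $p=3$ case for Euler. The only step left implicit is the final contrapositive deduction --- if the displayed inequality failed, one picks $\tilde\gamma>\gamma$ still satisfying the strict reverse inequality, so $\mathcal H^{\tilde\gamma}(S)=0$, and absolute continuity together with the concentration of $\tilde D$ on $S$ forces $\tilde D\equiv 0$ --- which is word-for-word the argument in the proof of \cref{C:main intermittency}.
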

This result immediately recovers the Obukhov--Corrsin and Eyink bounds, which say that non-trivial dissipation requires  $\sigma_p + 2 \beta_s\leq 1$ for all $p\in [1,\infty]$, and represents a substantial refinement in case more is known about the set where the dissipation measure concentrates.  For instance, numerical work on scalar advection by three-dimensional turbulent velocity fields \cite{iyer2018steep} suggests that exponents saturate $s\beta_s \to 1.2$ as $s\rightarrow \infty$, approximately, thus quite far from being monofractal. We believe that constructions in the spirit of  \cites{drivas2022anomalous,armstrong2025anomalous,colombo2023anomalous,elgindi2024norm,BSW23,hess2025universal,BSW26} could be made to show the sharpness of this intermittent Obukhov--Corrsin theory.

\textbf{Acknowledgments:} We thank Kartik Iyer for supplying us with the structure functions data from \cite{iyer2020scaling}. The research of TDD was partially supported by the NSF DMS-2106233 grant, NSF CAREER award \# 2235395, a Stony Brook University Trustee’s award as well as an Alfred P. Sloan Fellowship.  The research of PI is supported by the NSF grant DMS 2346799 and an Alfred P. Sloan Fellowship. LDR and MI acknowledge the support of the SNF grant Fluids, Turbulence, Advection (FLUTURA) \# 212573.

\textbf{Data Availability Statement:}  There is no unpublished data associated to this study.

\bibliographystyle{plain} 
\bibliography{biblio}

\end{document}